\newtheorem{theorem}{Theorem}
\newtheorem{proposition}{Proposition}
\newtheorem{lemma}{Lemma}
\theoremstyle{definition}
\theoremstyle{remark}
\newtheorem{remark}{Remark}
\newtheorem{corollary}{Corollary}
\numberwithin{equation}{section}
\newcommand\restr[2]{{% we make the whole thing an ordinary symbol
		\left.\kern-\nulldelimiterspace % automatically resize the bar with \right
		#1 % the function
		\vphantom{\big|} % pretend it's a little taller at normal size
		\right|_{#2} % this is the delimiter
}}
\numberwithin{equation}{section}
\numberwithin{theorem}{section}
\numberwithin{proposition}{section}
\numberwithin{lemma}{section}
\numberwithin{remark}{section}
\numberwithin{exercise}{section}
\numberwithin{corollary}{section}
\newcounter{desccount}
\begin{document}

%------
% Insert the title of your paper and (if necessary)
% a short title for the running head.
%------
\title{Inertial manifolds via spatial averaging: a control-theoretic perspective}
\titlemark{Inertial manifolds and nonstationary quadratic optimization}

%------

%%%% Pls fill in all fields for each author
%%%% Label the authors by their position in the authors' list using {}
%%%% Pls look for the MR Author ID in the metadata.xml file in the source bundle
%%%% ORCID is only to be added if provided by the author
%%%% Abbreviate first names for the running head
\emsauthor{1}{
	\givenname{Mikhail}
	\surname{Anikushin}
	%\mrid{1234567}
	\orcid{0000-0002-7781-8551}}{M.M.~Anikushin}
%%%% Repeat the same fields for each numbered author
%\emsauthor*{3}{
%	\givenname{Someone}
%	\surname{Else}
%	\mrid{}
%	\orcid{}}{S.~Else}

%%%% Please provide detalied address info for each author
%%%% Use the same numbering as for \emsauthor above
%%%% Please look up the ROR ID of your institute here: https://ror.org
\Emsaffil{1}{
	\pretext{}
	\department{Department of Applied Cybernetics}
	\organisation{Faculty of Mathematics and Mechanics, St Petersburg University}
	\rorid{01a2bcd34}
	\address{Universitetskiy prospekt 28}
	\zip{198504}
	\city{Peterhof}
	\country{Russia}
	\posttext{}
	\affemail{demolishka@gmail.com}
	}
%%%% Repeat the same fields for each numbered author
%%%% If some author has multiple affiliations, repeat the fields for each affiliation
%%%% Number the affiliations using {}
	
%% If a corresponding author is needed, use \emsauthor* instead of \emsauthor

%%%% Appendix authors are treated analogously with the only difference 
%%%% that they do not have the abbreviated name field
	
%%%% If a regular author is an Appendix author as well, repeat their data 
%%%% using  \Appendixauthor* instead of \Appendixauthor

%\Appendixauthor*{2}{
%	\givenname{Mikhail}
%%	\surname{Anikushin}
%	\mrid{1234567}
%	\orcid{0000-0001-0002-0003}}
%\Appendixaffil{2}{
%	\pretext{}
%	\department{}
%	\organisation{}
%	\rorid{}
%	\address{}
%	\zip{}
%	\city{}
%	\country{}
%	\posttext{}
%	\affemail{}
%	\furtheremail{}}

%------
% Add MSC 2020 codes according to www.ams.org/msc/msc2020.html.
% Secondary codes (in square brackets) are optional.
%------
\classification[37L25]{35B42}
%------
% Add a list of keywords. Only capitalise those keywords that start with a proper name.
%------
\keywords{Inertial manifolds, Quadratic optimization, Hamiltonian systems, Lagrange bundles}

%------
% Insert your abstract.
%------
\begin{abstract}
	We develop a functional-analytical machinery for studying the quadratic regulator problem arising from spectra perturbations of infinite-dimensional dynamical systems. In particular, we are interested in applications to inertial manifolds theory. For certain nonautonomous Hamiltonian systems associated with such problems, we show the existence and uniform nonoscillation of stable Lagrangian bundles. This is done within the context of the classical frequency condition for stationary problems, as well as for nonstationary problems arising under the conditions of the Spatial Averaging Principle of J.~Mallet-Paret and G.R.~Sell.
\end{abstract}

\maketitle
\tableofcontents
%------
% INSERT THE BODY OF THE PAPER HERE (except
% acknowledgments, funding info and bibliography)
%------

\section{Introduction}

In the study of dissipative dynamical systems one is interested in understanding dynamics in a \textit{neighborhood} of an attractor. However, most of well-studied problems do not take into account topological nuances contained in the neighborhood and deal only with internal dynamics on the attractor itself\footnote{Therefore even the attraction is ignored.}. In this regard, the Bendixson criterion for attractors (see R.A.~Smith \cite{Smith1986HD}; M.Y.~Li and J.S.~Muldowney \cite{LiMuldowney1995LowBounds}) can be considered as a rare gem in the attractor theory, which furthermore provides an effective criterion for global stability when being synthesized with dimension estimates (see M.Y.~Li and J.S.~Muldowney \cite{LiMuldowney1996SIAMGlobStab}; N.V.~Kuznetsov and V.~Reitmann \cite{KuzReit2020}; our works \cite{Anikushin2023Comp, Anikushin2023LyapExp} and \cite{AnikushinRomanov2023FreqConds, AnikushinRomanov2024EffEst} (joint with A.O.~Romanov)).

More complex problems related to dynamics in a neighborhood are concerned with the existence of inertial manifolds. These are finite-dimensional positively invariant manifolds containing the global attractor and exponentially attracting\footnote{This is the so-called \textit{exponential tracking} property. As shown by the present author in \cite{Anikushin2020Geom}, in all known situations, besides the inertial manifold itself, there exists a strongly stable foliation being a bundle over the inertial manifold and the tracking trajectory is obtained via projection of the bundle (i.e. along the stable leaves).} all trajectories of the system by trajectories lying on the manifold. In the presence of low-dimensional inertial manifolds one can visualize and effectively study dynamics both rigorously and numerically that is especially relevant in high or infinite dimensions (see R.A.~Smith \cite{Smith1992}; our papers \cite{AnikushinAADyn2021, Anikushin2020Geom} and \cite{AnikushinRom2023SS} (joint with A.O.~Romanov)). We refer to the surveys of S.~Zelik \cite{Zelik2014, ZelikAttractors2022} for more discussions in the context of attractors for PDEs and finite-dimensional reduction.

In our works \cite{Anikushin2020Geom, Anikushin2020FreqParab, Anikushin2020FreqDelay, Anikushin2022Semigroups}, it is shown that most of known constructions of inertial manifolds are related to the stationary case of an infinite-horizon quadratic regulator problem. Here \cite{Anikushin2020Geom} presents a geometric theory for cocycles in Banach spaces\footnote{However, known applications are directly related to the possibility of considering a given problem in a proper Hilbert space setting such that the Banach space is continuously embedded into it and dynamics is smoothing in finite time w.r.t. the embedding (see \cite{Anikushin2020Geom} for details). Appropriate developments of this approach in a ``purely Banach space setting'' should be concerned with appropriate developments of the (no longer quadratic) regulator problem in Banach spaces, which are unknown to us. In the case of stability problems, some partial progress in finite dimensions is done in the recent study of A.V.~Proskurnikov, A.~Davydov and F.~Bullo \cite{ProskurnikovDavydovBullo2023}.} based on (indefinite) quadratic Lyapunov-like functionals and variants of the Frequency Theorem \cite{Anikushin2020FreqParab, Anikushin2020FreqDelay} are used as analytical tools to provide frequency conditions (see \eqref{EQ: FreqConditionStationaryGeneral} and \eqref{EQ: StatCaseSmithFreqCond} below) guaranteeing the existence of such functionals for particular problems. This approach allowed us to unify many scattered works in the field, including parabolic and delay equations, and extend possible applications (see \cite{Anikushin2020FreqParab, Anikushin2020FreqDelay} for discussions).

However, the Spatial Averaging Principle suggested by J.~Mallet-Paret and G.R.~Sell \cite{MalletParetSell1988SA} to construct inertial manifolds for certain scalar reaction-diffusion equations in 2D and 3D domains, where the Spectral Gap Condition is violated, goes above stationary optimization problems, although its usual geometric realization via a single quadratic cone is covered by the geometric theory from \cite{Anikushin2020Geom}. Recent progress in developing the principle is discussed in the survey of A.~Kostianko et al. \cite{KostiankoZelikSA2020} and it is related to the works of the authors. In particular, appropriate developments of the method allowed them to show the existence of inertial manifolds for complex 3D Ginzburg-Landau equations \cite{KostiankoSunZelik2021IMGinzburgLandau} via the \textit{spatio-temporal} averaging introduced by A.~Kostianko in \cite{Kostianko2020}.

In \cite{Anikushin2020FreqParab}, we posed the problem of establishing relations between the Spatial Averaging Principle and nonautonomous Hamiltonian systems associated with appropriate quadratic optimization problems in regard with investigations done by R.~Fabbri, R.~Johnson and C.~N\'{u}\~{n}ez \cite{FabbriJohnsonNunez2003YakFT} in finite dimensions. In the present paper we establish some of such relations. Namely, we prove that under the conditions of the principle, the corresponding Hamiltonian systems admit uniformly nonoscillating stable Lagrangian bundles. However, we firstly start by studying the stationary optimization and establishing similar results which are seem to be new even in this case.

For convenience we treat only the context of the classical work \cite{MalletParetSell1988SA} which is reflected in that all the quadratic forms are bounded in the main space. Although appropriate developments of the general approach for systems related to semilinear parabolic problems is straightforward, more delicate cases arise from studying delay equations. In \cite{Anikushin2020FreqDelay}, the well-posedness and resolution of the optimization problem in the stationary case is related to structural properties of solutions to inhomogeneous Cauchy problems and avoids utilizing properties of Hamiltonian systems. Since the corresponding quadratic forms involve operators which are defined only on a Banach space, it is unclear to us, how to deduce (or better, in what sense to understand) the corresponding Hamiltonian systems.

Moreover, one of key features of the corresponding Hamiltonian systems in infinite dimensions is the \textit{rigidity of Lagrangian structures}. Namely, since solutions for general initial data do not exist, one cannot take an arbitrary Lagrangian subspace and track its evolution as one often does in finite dimensions (see R.~Johnson et al. \cite{JohnsonObayaNovoNunezFabbri2016}). Thus there exist only stable Lagrange bundles and the application of geometric methods is forbidden for establishing their existence. In this regard, Lyapunov-Perron operators seems like a natural analytical machinery for studying the existence problem. For applications, it is also necessary to establish uniform nonoscillation of such bundles and here we leave open applications of general geometric methods for such kind of problems.

Let us also mention that for problems related to uniform exponential stability, the nonoscillation immediately follows from our construction of stable Lagrange bundles; see Corollaries \ref{COR: StatDichLagrangeSubsFredholmPair} (with $j=0$) and \ref{COR: SpatAvrgLagrangeSubsFredholmPair} (with $N=0$).

This paper is organized as follows. In Section \ref{SEC: Preliminaries} we develop preliminary theory which is concerned with projectors and Lagrange subspaces (see Section \ref{SUBSEC: ProjectorsAndLagrangeSubspaces}) and Lyapunov-Perron operators for exponential dichotomies of dual stationary systems (see Section \ref{SUBSEC: LyapPerronStatDich}). In Section \ref{SEC: LagrangeBundlesStationaryCase}, we show the existence and nonoscillation of stable Lagrange subspaces for autonomous Hamiltonian systems related to stationary quadratic optimization under the classical frequency condition and $L_{2}$-controllability. In Section \ref{SEC: SpatAvgNonstatOpt}, we show the existence and uniform nonoscillation of stable Lagrange bundles for nonautonomous Hamiltonian systems arising within the conditions of the Spatial Averaging Principle of J.~Mallet-Paret and G.R.~Sell.
\section{Preliminaries}
\label{SEC: Preliminaries}
\subsection{Projectors and Lagrange subspaces}
\label{SUBSEC: ProjectorsAndLagrangeSubspaces}
In this section, we expose some basic facts about projectors in a Hilbert space $\mathbb{H}$ and then establish some useful relations between Lagrange subspaces and direct sum decompositions.

Recall that a \textit{projector} in $\mathbb{H}$ is a bounded linear mapping $\Pi \colon \mathbb{H} \to \mathbb{H}$ satisfying $\Pi^{2} = \Pi$. For a given projector $\Pi$, the space $\mathbb{H}$ decomposes into a direct sum\footnote{When talking about direct sum decompositions, we always assume that the corresponding subspaces are closed.} as $\mathbb{H} = \operatorname{Ran}\Pi \oplus \operatorname{Ker}\Pi$ of the range and kernel of $\Pi$ respectively. Conversely, given a direct sum decomposition $\mathbb{H} = \mathbb{H}_{\sharp} \oplus \mathbb{H}_{\flat}$ there exists a unique projector $\Pi_{\sharp}$ with $\operatorname{Ran}\Pi_{\sharp} = \mathbb{H}_{\sharp}$ and $\operatorname{Ker}\Pi_{\sharp} = \mathbb{H}_{\flat}$. One says that $\Pi_{\sharp}$ is a projector onto $\mathbb{H}_{\sharp}$ along $\mathbb{H}_{\flat}$. There is also the complementary projector $\Pi_{\flat} := \operatorname{Id}_{\mathbb{H}} - \Pi_{\sharp}$ which projects onto $\mathbb{H}_{\flat}$ along $\mathbb{H}_{\sharp}$. If the subspaces $\mathbb{H}_{\sharp}$ and $\mathbb{H}_{\flat}$ are orthogonal, $\Pi_{\sharp}$ and $\Pi_{\flat}$ are said to be \textit{orthogonal projectors}. It is not hard to see that orthogonal projectors are exactly the self-adjoint ones.

Let $|\cdot|_{\mathbb{H}}$ be the norm in $\mathbb{H}$. By the Bounded Inverse Theorem, there is an equivalent norm given by $\|v\|^{2}:= |\Pi_{\sharp} v|^{2}_{\mathbb{H}} + |\Pi_{\flat}v|^{2}_{\mathbb{H}}$ for $v \in \mathbb{H}$ associated with any direct sum decomposition $\mathbb{H} = \mathbb{H}_{\sharp} \oplus \mathbb{H}_{\flat}$.

From $\Pi^{2} = \Pi$ it is clear that the adjoint $\Pi^{*}$ of $\Pi$ is also a projector in $\mathbb{H}$. It is not hard to see that $\operatorname{Ker}\Pi^{*} = (\operatorname{Ran}\Pi)^{\bot}$, where $\bot$ denotes the orthogonal complement in $\mathbb{H}$.

Thus, for a given direct sum decomposition $\mathbb{H} = \mathbb{H}_{\sharp} \oplus \mathbb{H}_{\flat}$ there is an associated decomposition $\mathbb{H} = \mathbb{H}_{\flat}^{\bot} \oplus \mathbb{H}_{\sharp}^{\bot}$ which we call \textit{dual}. Here the projector onto $\mathbb{H}_{\flat}^{\bot}$ along $\mathbb{H}_{\sharp}^{\bot}$ is given by $\Pi^{*}_{\sharp}$.

The following lemma is valid in Banach spaces.
\begin{lemma}
	\label{LEM: DirectSumDecompositionCorrective}
	Let $\mathbb{H} = \mathbb{H}_{\sharp} \oplus \mathbb{H}_{\flat}$ and let $\Pi_{\sharp}$ be the projector onto $\mathbb{H}_{\sharp}$ along $\mathbb{H}_{\flat}$. Suppose $\mathbb{L}$ is a closed subspace. Then the following statement are equivalent:
	\begin{enumerate}
		\item[1.] $\Pi_{\sharp} \colon \mathbb{L} \to \mathbb{H}_{\sharp}$ is a bijection;
		\item[2.] $\mathbb{H} = \mathbb{L} \oplus \mathbb{H}_{\flat}$.
	\end{enumerate}
\end{lemma}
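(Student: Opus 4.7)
The proof is purely a linear-algebraic equivalence on top of a routine topological check, so I would organize it as two implications with the closed graph theorem handling the continuity issues.

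For the direction $2 \Rightarrow 1$, I would start by taking the decomposition $\mathbb{H} = \mathbb{L} \oplus \mathbb{H}_{\flat}$ and directly verifying both injectivity and surjectivity of $\Pi_{\sharp}$ restricted to $\mathbb{L}$. Injectivity: if $\ell \in \mathbb{L}$ and $\Pi_{\sharp}\ell = 0$, then $\ell \in \operatorname{Ker}\Pi_{\sharp} = \mathbb{H}_{\flat}$, so $\ell \in \mathbb{L} \cap \mathbb{H}_{\flat} = \{0\}$. Surjectivity: for any $x_{\sharp} \in \mathbb{H}_{\sharp}$, decompose $x_{\sharp} = \ell + f$ with $\ell \in \mathbb{L}$ and $f \in \mathbb{H}_{\flat}$; applying $\Pi_{\sharp}$ gives $x_{\sharp} = \Pi_{\sharp}\ell$.

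For the direction $1 \Rightarrow 2$, assuming $\Pi_{\sharp}\colon \mathbb{L} \to \mathbb{H}_{\sharp}$ is a bijection, I first show $\mathbb{L} \cap \mathbb{H}_{\flat} = \{0\}$: if $v$ lies in the intersection, then $\Pi_{\sharp} v = 0$ while $v \in \mathbb{L}$, and injectivity forces $v = 0$. Next, for any $x \in \mathbb{H}$, surjectivity of $\Pi_{\sharp}|_{\mathbb{L}}$ provides $\ell \in \mathbb{L}$ with $\Pi_{\sharp}\ell = \Pi_{\sharp}x$; then $x - \ell \in \operatorname{Ker}\Pi_{\sharp} = \mathbb{H}_{\flat}$, yielding $x \in \mathbb{L} + \mathbb{H}_{\flat}$. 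Algebraically this already gives $\mathbb{H} = \mathbb{L} \oplus \mathbb{H}_{\flat}$.

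The only genuinely non-trivial point is that this is a \emph{topological} direct sum in the sense adopted by the paper, i.e.\ that the associated projector onto $\mathbb{L}$ along $\mathbb{H}_{\flat}$ is bounded. For this I would observe that the projector in question can be written explicitly as $(\Pi_{\sharp}|_{\mathbb{L}})^{-1} \circ \Pi_{\sharp}$, and argue that $(\Pi_{\sharp}|_{\mathbb{L}})^{-1}\colon \mathbb{H}_{\sharp} \to \mathbb{L}$ is bounded by the Bounded Inverse Theorem, since $\mathbb{L}$ is a closed subspace of $\mathbb{H}$ (hence a Banach space in its own right) and $\Pi_{\sharp}|_{\mathbb{L}}$ is a bounded bijection between Banach spaces. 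The composition with $\Pi_{\sharp}$ is then bounded on $\mathbb{H}$, confirming that $\mathbb{L} \oplus \mathbb{H}_{\flat}$ is a proper topological direct sum decomposition.

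I expect the main (mild) obstacle to be keeping the distinction between algebraic and topological direct sums in focus, since the statement is framed in the Banach setting where closedness of the summands is part of the definition. Everything else is a one-line application of the defining properties of $\Pi_{\sharp}$ and the hypothesis that $\mathbb{L}$ is closed.
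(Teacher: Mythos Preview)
Your proof is correct and follows essentially the same approach as the paper: for $1 \Rightarrow 2$ you construct the bounded projector onto $\mathbb{L}$ along $\mathbb{H}_{\flat}$ as $(\Pi_{\sharp}|_{\mathbb{L}})^{-1}\Pi_{\sharp}$ via the Bounded Inverse Theorem, and for $2 \Rightarrow 1$ you check injectivity from $\mathbb{L} \cap \mathbb{H}_{\flat} = \{0\}$ and surjectivity by decomposing elements of $\mathbb{H}_{\sharp}$. The paper's argument is slightly more compressed (it phrases surjectivity via the identity $\Pi_{\sharp}\Pi_{\mathbb{L}} = \Pi_{\sharp}$), but the content is the same.
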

\begin{proof}
	To show that item 1 implies item 2, let $\Pi^{-1}$ be the inverse to $\Pi_{\sharp} \colon \mathbb{L} \to \mathbb{H}_{\sharp}$ which is bounded by the Bounded Inverse Theorem. It is clear that $\Pi_{\mathbb{L}} := \Pi^{-1} \Pi_{\sharp}$ is a bounded projector with $\operatorname{Ran}\Pi_{\mathbb{L}} = \mathbb{L}$ and $\operatorname{Ker}\Pi_{\mathbb{L}} = \mathbb{H}_{\flat}$. 
	
	For the converse implication, we have $\mathbb{L} \cap \mathbb{H}_{\flat} = \{0\}$ and $\Pi_{\sharp} \Pi_{\mathbb{L}} = \Pi_{\sharp}$, where $\Pi_{\mathbb{L}}$ is the projector onto $\mathbb{L}$ along $\mathbb{H}_{\flat}$. From the first property, we get that $\Pi_{\sharp} \colon \mathbb{L} \to \mathbb{H}_{\sharp}$ is injective and the second property guarantees that the map is surjective. The proof is finished.
\end{proof}

The following simple lemma will be helpful to establish that intersections of certain subspaces are finite-dimensional.
\begin{lemma}
	\label{LEM: IntersectionDimensionLemma}
	Suppose that\footnote{Here ``f'' stands for ``forbidden'' and ``a'' stands for ``admissible''.} $\mathbb{H} = \mathbb{H}_{f} \oplus \mathbb{H}_{a}$ with $\dim\mathbb{H}_{a} =: j < \infty$ and $\mathbb{L}$ is a subspace in $\mathbb{H}$ such that $\mathbb{L} \cap \mathbb{H}_{f} = \{0\}$. Then
	\begin{equation}
		\dim \mathbb{L} \leq j.
	\end{equation}
\end{lemma}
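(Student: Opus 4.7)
The plan is to exploit the projector $\Pi_{a}$ onto $\mathbb{H}_{a}$ along $\mathbb{H}_{f}$ associated with the given direct sum decomposition, and use it to embed $\mathbb{L}$ linearly into the finite-dimensional space $\mathbb{H}_{a}$.

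First I would introduce the (bounded) projector $\Pi_{a} \colon \mathbb{H} \to \mathbb{H}$ with $\operatorname{Ran}\Pi_{a} = \mathbb{H}_{a}$ and $\operatorname{Ker}\Pi_{a} = \mathbb{H}_{f}$, whose existence was recalled just before the lemma. Then I would consider its restriction $\Pi_{a}|_{\mathbb{L}} \colon \mathbb{L} \to \mathbb{H}_{a}$. The kernel of this restriction is exactly $\mathbb{L} \cap \operatorname{Ker}\Pi_{a} = \mathbb{L} \cap \mathbb{H}_{f}$, which by hypothesis is trivial. Hence $\Pi_{a}|_{\mathbb{L}}$ is an injective linear map from $\mathbb{L}$ into $\mathbb{H}_{a}$.

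Since $\dim \mathbb{H}_{a} = j < \infty$ and any injective linear map into a finite-dimensional space forces the domain to be finite-dimensional with dimension at most that of the codomain, we conclude $\dim \mathbb{L} \leq j$, completing the proof.

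There is no real obstacle here: the argument is a two-line consequence of the definition of the projector $\Pi_{a}$ together with the standard rank-nullity principle for linear maps. The only point worth noting is that closedness of $\mathbb{L}$ is not actually needed for the conclusion (the argument works for any linear subspace), but is natural to keep in line with the convention stated in the preceding footnote.
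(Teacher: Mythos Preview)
Your proof is correct and follows exactly the same approach as the paper: restrict the projector onto $\mathbb{H}_{a}$ along $\mathbb{H}_{f}$ to $\mathbb{L}$, observe that it is injective because its kernel is $\mathbb{L} \cap \mathbb{H}_{f} = \{0\}$, and conclude the dimension bound. The paper states this in two lines; your version simply spells out the details.
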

\begin{proof}
	From the hypotheses we get that the projector onto $\mathbb{H}_{a}$ along  $\mathbb{H}_{f}$ is injective on $\mathbb{L}$. From this the conclusion is obvious.
\end{proof}

For a Hilbert space $\mathbb{H}$, we will use the natural complex structure $J$ on $\mathbb{H} \times \mathbb{H}$ given by $J(v,\eta) := (-\eta,v)$ for any $(v,\eta) \in \mathbb{H} \times \mathbb{H}$. Recall that a subspace $\mathbb{L}$ in $\mathbb{H} \times \mathbb{H}$ is called \textit{isotropic} if $J \mathbb{L} \subset \mathbb{L}^{\bot}$ and \textit{Lagrange} if $J \mathbb{L} = \mathbb{L}^{\bot}$.

Let $\Lambda(\mathbb{H})$ be the \textit{Lagrangian-Grassmanian} in $\mathbb{H}\times \mathbb{H}$, i.e. the set of all Lagrange subspaces in $\mathbb{H} \times \mathbb{H}$. It is endowed with the metric given for any $\mathbb{L}_{1},\mathbb{L}_{2} \in \Lambda(\mathbb{H})$ by
\begin{equation}
	\label{EQ: MetricLagrangianGrassmanian}
	d(\mathbb{L}_{1},\mathbb{L}_{2}) := \| \Pi_{\mathbb{L}_{1}} - \Pi_{\mathbb{L}_{2}} \|,
\end{equation}
where $\Pi_{\mathbb{L}_{1}}$ and $\Pi_{\mathbb{L}_{2}}$ are the orthogonal projectors onto $\mathbb{L}_{1}$ and $\mathbb{L}_{2}$ respectively and $\|\cdot\|$ is the operator norm in $\mathcal{L}(\mathbb{H} \times \mathbb{H})$.

It is clear that for any $\mathbb{L} \in \Lambda(\mathbb{H})$ and any self-adjoint bounded operator $P \in \mathcal{L}(\mathbb{L})$ on $\mathbb{L}$ the subspace
\begin{equation}
	\label{EQ: LangrGrassCharSAOp}
	\mathbb{L}_{P} := \{ v + JPv \ | \ v \in \mathbb{L} \}
\end{equation}
is Lagrange. Let $\mathcal{O}_{\mathbb{L}}$ consist of all such subspaces. It turns out that $\mathcal{O}_{\mathbb{L}}$ is an open subset in $\Lambda(\mathbb{H})$ and it is isomorphic to the set of all subspaces $\mathbb{L}_{0}$ which are transversal to $\mathbb{L}^{\bot}$, i.e. $\mathbb{L}_{0} + \mathbb{L}^{\bot} = \mathbb{H} \times \mathbb{H}$ (see Proposition 2.21 in \cite{Furutani2004FLG}). It can be shown that $\Lambda(\mathbb{H})$ is a differentiable manifold modeled on the space of bounded self-adjoint operators in $\mathbb{H}$.

For given subspaces $\mathbb{L}_{1}$ and $\mathbb{L}_{2}$ in $\mathbb{H}$, we consider $\mathbb{L}_{1} \times \mathbb{L}_{2}$ as the subspace in $\mathbb{H} \times \mathbb{H}$ by putting
\begin{equation}
	\mathbb{L}_{1} \times \mathbb{L}_{2} := \{ (v,\eta) \in \mathbb{H} \times \mathbb{H} \ | \ v \in \mathbb{L}_{1}, \eta \in \mathbb{L}_{2} \}.
\end{equation}
Moreover, $\{ 0_{\mathbb{H}}\}$ denotes the zero subspace in $\mathbb{H}$.

Now let us establish a series of useful statements.
\begin{proposition}
	\label{PROP: LagrangeThroughAProjector}
	Suppose $\Pi$ is a projector in $\mathbb{H}$. Then the subspace 
	\begin{equation}
		\mathbb{L} := \{ (v,\eta) \in \mathbb{H} \times \mathbb{H} \ | \ v \in \operatorname{Ran}\Pi, \ \eta \in \operatorname{Ker}\Pi^{*} = (\operatorname{Ran}\Pi)^{\bot} \}
	\end{equation}
	is Lagrange.
\end{proposition}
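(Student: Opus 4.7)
The plan is simply to unwind the definitions and exploit the product structure of $\mathbb{L}$. Writing $\mathbb{M} := \operatorname{Ran}\Pi$, which is closed since $\Pi$ is a bounded projector, we have $\mathbb{L} = \mathbb{M} \times \mathbb{M}^{\bot}$ in the product notation introduced just before the proposition. The task reduces to computing $J\mathbb{L}$ and $\mathbb{L}^{\bot}$ separately and checking they coincide.

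First I would compute $J\mathbb{L}$ directly from $J(v,\eta) = (-\eta, v)$: sending $(v,\eta)$ with $v \in \mathbb{M}$ and $\eta \in \mathbb{M}^{\bot}$ through $J$ gives $(-\eta, v)$ with first coordinate ranging over $\mathbb{M}^{\bot}$ and second coordinate over $\mathbb{M}$, so $J\mathbb{L} = \mathbb{M}^{\bot} \times \mathbb{M}$.

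Next I would compute $\mathbb{L}^{\bot}$ in $\mathbb{H}\times\mathbb{H}$ by separating variables: $(a,b)\in \mathbb{L}^{\bot}$ means $\langle a,v\rangle_{\mathbb{H}} + \langle b,\eta\rangle_{\mathbb{H}} = 0$ for all $v\in\mathbb{M}$ and $\eta\in\mathbb{M}^{\bot}$. Taking $\eta = 0$ forces $a \in \mathbb{M}^{\bot}$, and taking $v=0$ forces $b \in (\mathbb{M}^{\bot})^{\bot} = \mathbb{M}$, using closedness of $\mathbb{M}$. Conversely any such $(a,b)$ clearly annihilates $\mathbb{L}$, so $\mathbb{L}^{\bot} = \mathbb{M}^{\bot} \times \mathbb{M}$. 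Comparing the two formulas yields $J\mathbb{L} = \mathbb{L}^{\bot}$, which is the definition of Lagrange.

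There is no genuine obstacle; the only subtlety is to record that $\operatorname{Ran}\Pi$ is closed (this is used both to make sense of $\mathbb{L}$ as a closed subspace and to identify $((\operatorname{Ran}\Pi)^{\bot})^{\bot}$ with $\operatorname{Ran}\Pi$), and that the hypothesis $\operatorname{Ker}\Pi^{*} = (\operatorname{Ran}\Pi)^{\bot}$ stated in the proposition is not an extra assumption but the general identity noted in the text just above Lemma~\ref{LEM: DirectSumDecompositionCorrective}.
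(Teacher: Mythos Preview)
Your proof is correct and follows essentially the same approach as the paper: both compute $\mathbb{L}^{\bot}$ by separating the two coordinates (the paper phrases this as $\mathbb{L}$ being the orthogonal sum of $\operatorname{Ran}\Pi \times \{0_{\mathbb{H}}\}$ and $\{0_{\mathbb{H}}\} \times (\operatorname{Ran}\Pi)^{\bot}$) and identify the result with $J\mathbb{L}$. Your version is slightly more explicit in flagging the closedness of $\operatorname{Ran}\Pi$ needed for $(\mathbb{M}^{\bot})^{\bot} = \mathbb{M}$, which the paper leaves implicit.
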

\begin{proof}
	Suppose that $(v_{0},\eta_{0}) \in \mathbb{H} \times \mathbb{H}$ is orthogonal ot $\mathbb{L}$. Since $\mathbb{L}$ splits into the orthogonal sum of $\operatorname{Ran}\Pi \times \{0_{\mathbb{H}}\}$ and $\{0_{\mathbb{H}}\} \times (\operatorname{Ran}\Pi)^{\bot}$, this is equivalent to 
	\begin{equation}
		\begin{split}
			&\langle v_{0}, v\rangle_{\mathbb{H}} = 0 \textit{ for any } v \in \operatorname{Ran}\Pi \text{ and },\\
			&\langle \eta_{0}, \eta\rangle_{\mathbb{H}} = 0 \textit{ for any } \eta \in (\operatorname{Ran}\Pi)^{\bot}
		\end{split}
	\end{equation}
	or that $v_{0} \in (\operatorname{Ran}\Pi)^{\bot}$ and $\eta_{0} \in \operatorname{Ran}\Pi$. In other words, $(v_{0},\eta_{0}) \in J \mathbb{L}$. The proof is finished.
\end{proof}

For $\mathbb{H} = \mathbb{R}^{n}$, it is clear that $\mathbb{L}$ is Lagrange if and only if $\mathbb{L}$ is isotropic and $\dim \mathbb{L} = n$, i.e. $\mathbb{L}$ is an isotropic subspace of maximal dimension. The following theorem extends this idea to infinite dimensions. Here having ``maximal dimension'' for an isotropic subspace is treated as being the graph over a direct sum decomposition into Lagrange subspaces. This is a key result which will be used further to establish that some constructed subspaces are Lagrange.

\begin{theorem}
	\label{TH: LagrangeSubspaceCriterion}
	Suppose $\mathbb{H} \times \mathbb{H} = \mathbb{H}_{\sharp} \oplus \mathbb{H}_{\flat}$, where $\mathbb{H}_{\sharp}$ and $\mathbb{H}_{\flat}$ are Lagrange subspaces. Let $\mathbb{L} \subset \mathbb{H} \times \mathbb{H}$ be any closed isotropic subspace such that the projector $\Pi_{\sharp}$ onto $\mathbb{H}_{\sharp}$ along $\mathbb{H}_{\flat}$ is bijective on $\mathbb{L}$. Then $\mathbb{L}$ is Lagrange.
\end{theorem}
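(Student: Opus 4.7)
The plan is to verify the Lagrange condition $J\mathbb{L} = \mathbb{L}^{\bot}$ directly. Since $\mathbb{L}$ is isotropic, the inclusion $J\mathbb{L} \subset \mathbb{L}^{\bot}$ is already given, so the whole task reduces to proving the reverse inclusion $\mathbb{L}^{\bot} \subset J\mathbb{L}$.

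The first step is to combine the hypothesis that $\Pi_{\sharp} \colon \mathbb{L} \to \mathbb{H}_{\sharp}$ is a bijection with Lemma \ref{LEM: DirectSumDecompositionCorrective} (applied in the ambient space $\mathbb{H} \times \mathbb{H}$ with the splitting $\mathbb{H} \times \mathbb{H} = \mathbb{H}_{\sharp} \oplus \mathbb{H}_{\flat}$) to deduce the key decomposition
\begin{equation}
	\mathbb{H} \times \mathbb{H} = \mathbb{L} \oplus \mathbb{H}_{\flat}.
\end{equation}
This replaces the rather abstract ``graph'' description of $\mathbb{L}$ provided by the bijectivity of $\Pi_{\sharp}$ with a usable direct sum structure that has $\mathbb{L}$ as one of the summands.

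Next, I would pick an arbitrary $w \in \mathbb{L}^{\bot}$ and decompose
\begin{equation}
	-Jw = u + v, \qquad u \in \mathbb{L},\ v \in \mathbb{H}_{\flat},
\end{equation}
according to the direct sum above. The goal is to show $v = 0$. Applying $J$ yields $Jv = w - Ju$. For any $u' \in \mathbb{L}$ one has $\langle w, u'\rangle_{\mathbb{H}\times\mathbb{H}} = 0$ because $w \in \mathbb{L}^{\bot}$, and $\langle Ju, u'\rangle_{\mathbb{H}\times\mathbb{H}} = 0$ by isotropy of $\mathbb{L}$. Hence $Jv \in \mathbb{L}^{\bot}$. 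On the other hand, since $\mathbb{H}_{\flat}$ is Lagrange we have $J\mathbb{H}_{\flat} = \mathbb{H}_{\flat}^{\bot}$, so $Jv \in \mathbb{H}_{\flat}^{\bot}$. Combining these two memberships with the first step,
\begin{equation}
	Jv \in \mathbb{L}^{\bot} \cap \mathbb{H}_{\flat}^{\bot} = (\mathbb{L} + \mathbb{H}_{\flat})^{\bot} = (\mathbb{H}\times\mathbb{H})^{\bot} = \{0\},
\end{equation}
which forces $v = 0$ and therefore $-Jw = u \in \mathbb{L}$, i.e.\ $w \in J\mathbb{L}$.

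The only delicate point I anticipate is the one that has already been isolated: making sure that the complementary summand in the direct sum decomposition of $\mathbb{H}\times\mathbb{H}$ is taken to be the \emph{Lagrange} subspace $\mathbb{H}_{\flat}$ (rather than $\mathbb{H}_{\sharp}$), since it is precisely the identity $J\mathbb{H}_{\flat} = \mathbb{H}_{\flat}^{\bot}$ that converts ``$Jv \in \mathbb{L}^{\bot}$'' into a genuine vanishing statement. Interestingly, the argument never uses that $\mathbb{H}_{\sharp}$ is Lagrange; its role is only to guarantee, together with the bijectivity hypothesis, the direct sum decomposition $\mathbb{H} \times \mathbb{H} = \mathbb{L} \oplus \mathbb{H}_{\flat}$ via Lemma \ref{LEM: DirectSumDecompositionCorrective}.
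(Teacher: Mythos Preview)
Your argument is correct. Both your proof and the paper's proof hinge on Lemma~\ref{LEM: DirectSumDecompositionCorrective} to obtain $\mathbb{H}\times\mathbb{H} = \mathbb{L} \oplus \mathbb{H}_{\flat}$, but the routes from there diverge. The paper works structurally with projectors: it identifies $J\Pi_{\sharp}J^{-1}$ with the adjoint $\Pi^{*}_{\flat}$ (using that \emph{both} $\mathbb{H}_{\sharp}$ and $\mathbb{H}_{\flat}$ are Lagrange, so that the $J$-image of the decomposition coincides with the dual decomposition), then shows $\Pi^{*}_{\flat}$ is bijective from $J\mathbb{L}$ onto $J\mathbb{H}_{\sharp}$ and injective on the larger space $\mathbb{L}^{\bot}$, forcing $J\mathbb{L} = \mathbb{L}^{\bot}$. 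Your approach is a direct elementwise computation: decompose $-Jw$ along $\mathbb{L} \oplus \mathbb{H}_{\flat}$ and kill the $\mathbb{H}_{\flat}$-component using $(\mathbb{L} + \mathbb{H}_{\flat})^{\bot} = \{0\}$ together with $J\mathbb{H}_{\flat} = \mathbb{H}_{\flat}^{\bot}$. Your observation at the end is on point and is a genuine gain over the paper's argument: the Lagrange assumption on $\mathbb{H}_{\sharp}$ plays no role in your proof, whereas the paper invokes it to get the identification $J\Pi_{\sharp}J^{-1} = \Pi^{*}_{\flat}$. So your version is both more elementary and slightly more general.
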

\begin{proof}
	Since $J$ is unitary, we have that $\mathbb{H} \times \mathbb{H} = J\mathbb{H}_{\sharp} \oplus J\mathbb{H}_{\flat}$ and $\Pi^{J}_{\sharp} := J\Pi_{\sharp}J^{-1}$ is the projector onto $J\mathbb{H}_{\sharp}$ along  $J\mathbb{H}_{\flat}$. 
	
	Let $\Pi_{\flat}$ be the projector onto $\mathbb{H}_{\flat}$ along $\mathbb{H}_{\sharp}$. Since $\mathbb{H}_{\sharp}$ and $\mathbb{H}_{\flat}$ are Lagrange, we have $J\mathbb{H}_{\sharp} = (\mathbb{H}_{\sharp})^{\bot}$ and $J\mathbb{H}_{\flat} = (\mathbb{H}_{\flat})^{\bot}$. In other words, the decomposition $\mathbb{H} \times \mathbb{H} =  J\mathbb{H}_{\flat} \oplus J\mathbb{H}_{\sharp}$ is dual to the decomposition $\mathbb{H} \times \mathbb{H} = \mathbb{H}_{\sharp} \oplus \mathbb{H}_{\flat}$. Consequently,
	\begin{equation}
		\Pi^{J}_{\sharp} = J\Pi_{\sharp}J^{-1} = \Pi^{*}_{\flat}.
	\end{equation}
    From this and since $\Pi_{\sharp} \colon \mathbb{L} \to \mathbb{H}_{\sharp}$ is a bijection, $\Pi^{*}_{\flat} \colon J\mathbb{L} \to J\mathbb{H}_{\sharp}$ is also a bijection.
    
    Let us show that $\Pi^{*}_{\flat} \colon \mathbb{L}^{\bot} \to J\mathbb{H}_{\sharp}$ is injective. Suppose that $z \in \mathbb{L}^{\bot}$ is such that $\Pi^{*}_{\flat} = 0$. Then $z \bot \operatorname{Ran}\Pi_{\flat} = \mathbb{H}_{\flat}$ and, consequently, $z \bot \mathbb{L} \oplus \mathbb{H}_{\flat}$. By Lemma \ref{LEM: DirectSumDecompositionCorrective}, $\mathbb{L} \oplus \mathbb{H}_{\flat} = \mathbb{H} \times \mathbb{H}$ and, as a consequence, $z = 0$. Thus, $\Pi^{*}_{\flat}$ is injective on $\mathbb{L}^{\bot}$.
    
    Since $\mathbb{L}$ is isotropic, we have $J \mathbb{L} \subset \mathbb{L}^{\bot}$. From this and the above proved we get that $J \mathbb{L} = \mathbb{L}^{\bot}$. The proof is finished.
\end{proof}

Of course, any Lagrange subspace $\mathbb{L}$ satisfies the hypotheses of Theorem \ref{TH: LagrangeSubspaceCriterion} with $\mathbb{H}_{\sharp} = \mathbb{L}$ and $\mathbb{H}_{\flat} = J\mathbb{L}$.

In the following theorem, in a sense converse problem is treated. Namely, it establishes a criterion for a given Lagrange subspace to be a graph over orthogonal decomposition into Lagrange subspaces. Note that it can be reformulated as a criterion for $\mathbb{L}_{0}$ to belong to the chart $\mathcal{O}_{\mathbb{L}}$ associated with $\mathbb{L}$ on the Lagrangian Grassmanian (see \eqref{EQ: LangrGrassCharSAOp}).
\begin{theorem}
	\label{TH: CriterionLagrangeSubspaceIsAGraph}
	Suppose $\mathbb{H} \times \mathbb{H} = \mathbb{L} \oplus \mathbb{L}^{\bot}$ with $\mathbb{L}$ being a Lagrange subspace. Let $\mathbb{L}_{0}$ be another Lagrange subspace and $\Pi_{\mathbb{L}}$ be the orthogonal projector (i.e. along $\mathbb{L}^{\bot}$) onto $\mathbb{L}$. Then the following statements are equivalent:
	\begin{enumerate}
		\item[1.] $\Pi_{\mathbb{L}} \colon \mathbb{L}_{0} \to \mathbb{L}$ is bijective;
		\item[2.] $\Pi_{\mathbb{L}} \colon \mathbb{L}_{0} \to \mathbb{L}$ is injective with closed range;
		\item[3.] There exists a closed subspace $\mathbb{L}_{c}$ such that $\mathbb{L}_{0} \oplus \mathbb{L}_{c} \oplus \mathbb{L}^{\bot} = \mathbb{H} \times \mathbb{H}$.
	\end{enumerate}
\end{theorem}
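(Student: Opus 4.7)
The plan is to establish the cyclic chain $(1) \Rightarrow (3) \Rightarrow (2) \Rightarrow (1)$ since $(1) \Rightarrow (2)$ is trivial. The three nontrivial steps each rely on a distinct tool: Lemma \ref{LEM: DirectSumDecompositionCorrective} for the direct sum reformulations, the Open Mapping Theorem through that same lemma for the closed range, and the Lagrange duality $\mathbb{L}^{\bot} = J\mathbb{L}$, $\mathbb{L}_{0}^{\bot} = J\mathbb{L}_{0}$ for the surjectivity.

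For $(1) \Rightarrow (3)$, I would simply take $\mathbb{L}_{c} := \{0_{\mathbb{H} \times \mathbb{H}}\}$. Lemma \ref{LEM: DirectSumDecompositionCorrective}, applied to the decomposition $\mathbb{H} \times \mathbb{H} = \mathbb{L} \oplus \mathbb{L}^{\bot}$ (which is direct since $\mathbb{L}$ is closed and $\Pi_{\mathbb{L}}$ is orthogonal) with the auxiliary closed subspace $\mathbb{L}_{0}$, says that the bijectivity of $\Pi_{\mathbb{L}} \colon \mathbb{L}_{0} \to \mathbb{L}$ is equivalent to $\mathbb{H} \times \mathbb{H} = \mathbb{L}_{0} \oplus \mathbb{L}^{\bot}$.

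For $(3) \Rightarrow (2)$, I would apply Lemma \ref{LEM: DirectSumDecompositionCorrective} once more, this time to the closed subspace $\mathbb{L}_{0} \oplus \mathbb{L}_{c}$, which together with $\mathbb{L}^{\bot}$ gives a direct sum decomposition of $\mathbb{H} \times \mathbb{H}$ by hypothesis. The lemma then produces a topological isomorphism $\Phi \colon \mathbb{L}_{0} \oplus \mathbb{L}_{c} \to \mathbb{L}$ realised as the restriction of $\Pi_{\mathbb{L}}$. Since $\mathbb{L}_{0}$ is a closed subspace of $\mathbb{L}_{0} \oplus \mathbb{L}_{c}$, its image $\Pi_{\mathbb{L}}(\mathbb{L}_{0}) = \Phi(\mathbb{L}_{0})$ is closed in $\mathbb{L}$; injectivity of $\Pi_{\mathbb{L}}|_{\mathbb{L}_{0}}$ is immediate from $\mathbb{L}_{0} \cap \mathbb{L}^{\bot} = \{0\}$, which is contained in the direct sum hypothesis.

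The main conceptual step, and the one I expect to require the most care, is $(2) \Rightarrow (1)$, because this is where the Lagrange nature of both $\mathbb{L}$ and $\mathbb{L}_{0}$ must be used, since the analogous implication fails for arbitrary closed subspaces. Given $(2)$, I only need to prove that $\Pi_{\mathbb{L}}(\mathbb{L}_{0})$ is dense in $\mathbb{L}$. The orthogonal complement of this range in $\mathbb{L}$ consists of those $y \in \mathbb{L}$ with $\langle y, \Pi_{\mathbb{L}} x\rangle = \langle y, x\rangle = 0$ for all $x \in \mathbb{L}_{0}$, i.e.\ it equals $\mathbb{L} \cap \mathbb{L}_{0}^{\bot} = \mathbb{L} \cap J\mathbb{L}_{0}$, using that $\mathbb{L}_{0}$ is Lagrange. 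On the other hand, injectivity of $\Pi_{\mathbb{L}}|_{\mathbb{L}_{0}}$ reads $\mathbb{L}_{0} \cap \mathbb{L}^{\bot} = \mathbb{L}_{0} \cap J\mathbb{L} = \{0\}$; applying $J$ and using $J^{2} = -\operatorname{Id}$ transforms this into $\mathbb{L} \cap J\mathbb{L}_{0} = \{0\}$, so the orthogonal complement is trivial and the range is dense. Together with the closed range assumption, this yields surjectivity, and injectivity was already given by hypothesis.
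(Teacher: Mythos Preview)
Your proof is correct and follows essentially the same route as the paper: the paper also treats $1 \Rightarrow (2,3)$ as immediate and then proves $3 \Rightarrow 2$ via Lemma \ref{LEM: DirectSumDecompositionCorrective} and the Bounded Inverse Theorem, and $2 \Rightarrow 1$ via the Lagrange identities $\mathbb{L}^{\bot} = J\mathbb{L}$, $\mathbb{L}_{0}^{\bot} = J\mathbb{L}_{0}$ to show that any $e \in \mathbb{L}$ orthogonal to $\Pi_{\mathbb{L}}\mathbb{L}_{0}$ forces $Je \in \mathbb{L}_{0} \cap \mathbb{L}^{\bot} = \{0\}$. Your direct computation of the orthogonal complement of the range as $\mathbb{L} \cap J\mathbb{L}_{0}$ and the application of $J$ to $\mathbb{L}_{0} \cap J\mathbb{L} = \{0\}$ is simply a slightly more streamlined packaging of the same idea.
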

\begin{proof}
	Since item 1 clearly implies validity of the other items, it is sufficient to show the implications $3 \Rightarrow 2 \Rightarrow 1$.
	
	For $3 \Rightarrow 2$, we apply Lemma \ref{LEM: DirectSumDecompositionCorrective} to get that $\Pi_{\mathbb{L}} \colon \mathbb{L}_{0} \oplus \mathbb{L}_{c} \to \mathbb{L}$ is a bijection. In particular, $\Pi_{\mathbb{L}} \colon \mathbb{L}_{0} \to \mathbb{L}$ is injection and, by the Bounded Inverse Theorem, the range $\Pi_{\mathbb{L}}\mathbb{L}_{0}$ is closed. 
	
	For $2 \Rightarrow 1$, we assume that $\Pi_{\mathbb{L}} \mathbb{L}_{0} \not= \mathbb{L}$ (otherwise we are done). Then there exists a non-zero element $e \in \mathbb{L}$ which is orthogonal to $\Pi_{\mathbb{L}} \mathbb{L}_{0}$ and, consequently, it must be also orthogonal to $\mathbb{L}_{0}$. Since $\mathbb{L}_{0}$ is Lagrange, we have $e \in \mathbb{L}^{\bot}_{0} = J \mathbb{L}_{0}$ and, as a consequence, $J e \in \mathbb{L}_{0} \cap \mathbb{L}^{\bot}$ since $\mathbb{L}$ is Lagrange. Thus $\Pi_{\mathbb{L}}Je = 0$ that contradicts the injectivity. The proof is finished.
\end{proof}

\begin{remark}
	Let us illustrate that the closedness of $\Pi_{\mathbb{L}}$ from item 2 of Theorem \ref{TH: CriterionLagrangeSubspaceIsAGraph} cannot be omitted. For this, we will construct a Lagrange subspace $\mathbb{L}_{0}$ such that $\Pi_{\mathbb{L}} \colon \mathbb{L}_{0} \to \mathbb{L}$ is injective, but not surjective, with dense range. Let us fix an orthonormal basis $\{ e_{k}\}_{k \geq 1}$ in $\mathbb{L}$. Since $\mathbb{L}$ is Lagrange, $\{ Je_{k}\}_{k \geq 1}$ is an orthonormal basis for $\mathbb{L}^{\bot} = J \mathbb{L}$. Let $\mathbb{L}_{0}$ admit the orthonormal basis of linear combinations $a_{k} e_{k} + b_{k} J e_{k}$, where $k=1,2,\ldots$, with $|a_{k}|^{2} + |b_{k}|^{2} = 1$ and $a_{k} > 0$ such that $a_{k} \to 0$ as $k \to \infty$. Clearly, $\Pi_{\mathbb{L}}$ maps $\mathbb{L}_{0}$ injectively and densely into $\mathbb{L}$, but it is not continuously invertible on the range since $\Pi_{\mathbb{L}}( a_{k} e_{k} + b_{k} J e_{k} ) =  a_{k} e_{k}$ can be arbitrarily small as $k \to \infty$.
	
	Note also that the situation of $\Pi_{\mathbb{L}} \colon \mathbb{L}_{0} \to \mathbb{L}$ being injective with nondense range is impossible by the argument used in the implication $2 \Rightarrow 1$ from Theorem \ref{TH: CriterionLagrangeSubspaceIsAGraph} with $e$ taken from the closure of $\Pi_{\mathbb{L}}\mathbb{L}_{0}$. \qed
\end{remark}

For the following proposition we fix a complete metric space $\mathcal{Q}$.
\begin{proposition}
	\label{PROP: LagrangeSubspacesContinuousDependence}
	For each $q \in \mathcal{Q}$, let $\mathbb{L}(q)$ a Lagrange subspace in $\mathbb{H} \times \mathbb{H}$. Suppose there exists a decomposition $\mathbb{H} \times \mathbb{H} = \mathbb{H}_{\sharp} \oplus \mathbb{H}_{\flat}$ and a family of bounded linear operators $M(q) \colon \mathbb{H}_{\sharp} \to \mathbb{H}_{\flat}$ such that for any $q \in \mathcal{Q}$ we have
	\begin{equation}
		\mathbb{L}(q) = \{ z_{\sharp} + M(q)z_{\sharp} \ | \ z_{\sharp} \in \mathbb{H}_{\sharp} \}.
	\end{equation}
	Then the following properties are equivalent:
	\begin{enumerate}
		\item[1)] The mapping $\mathcal{Q} \ni q \mapsto \mathbb{L}(q) \in \Lambda(\mathbb{H})$ is continuous;
		\item[2)] The mapping $\mathcal{Q} \ni q \mapsto M(q) \in \mathcal{L}(\mathbb{H}_{\sharp};\mathbb{H}_{\flat})$ is continuous in the operator norm.
	\end{enumerate}
\end{proposition}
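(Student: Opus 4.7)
The plan is to introduce the auxiliary \emph{graph embedding} $B(q) \colon \mathbb{H}_\sharp \to \mathbb{H} \times \mathbb{H}$, $B(q) z_\sharp := z_\sharp + M(q) z_\sharp$, and to express both $\Pi_{\mathbb{L}(q)}$ and $M(q)$ in its terms. By construction, $B(q)$ is bounded and injective with $\operatorname{Ran} B(q) = \mathbb{L}(q)$, and $\Pi_\sharp B(q) = I_{\mathbb{H}_\sharp}$, where $\Pi_\sharp$ is the fixed projector onto $\mathbb{H}_\sharp$ along $\mathbb{H}_\flat$. The identity $\Pi_\sharp B(q) = I_{\mathbb{H}_\sharp}$ yields the uniform lower bound $\|B(q) z_\sharp\| \geq \|\Pi_\sharp\|^{-1} \|z_\sharp\|$, so $\operatorname{Ran} B(q)$ is closed and $B(q)^* B(q) \colon \mathbb{H}_\sharp \to \mathbb{H}_\sharp$ is strictly positive and invertible. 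Writing $\Pi_\flat := I - \Pi_\sharp$, which is also fixed, we have $M(q) = \Pi_\flat B(q)$; hence continuity of $q \mapsto M(q)$ in $\mathcal{L}(\mathbb{H}_\sharp; \mathbb{H}_\flat)$ is equivalent to continuity of $q \mapsto B(q)$ in $\mathcal{L}(\mathbb{H}_\sharp; \mathbb{H} \times \mathbb{H})$.

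For the implication $2) \Rightarrow 1)$, I would invoke the standard formula
\begin{equation*}
\Pi_{\mathbb{L}(q)} = B(q) \bigl(B(q)^* B(q)\bigr)^{-1} B(q)^*,
\end{equation*}
valid for the orthogonal projector onto the range of any injective bounded operator with closed range. Continuity of $B(q)$ transports to $B(q)^* B(q)$, which stays inside the open set of invertibles in $\mathcal{L}(\mathbb{H}_\sharp)$; by continuity of operator inversion there, $(B(q)^* B(q))^{-1}$ depends continuously on $q$, and composition yields continuity of $\Pi_{\mathbb{L}(q)}$.

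For the converse $1) \Rightarrow 2)$, fix $q_0 \in \mathcal{Q}$ and observe that $\Pi_\sharp \Pi_{\mathbb{L}(q_0)} B(q_0) = \Pi_\sharp B(q_0) = I_{\mathbb{H}_\sharp}$ is invertible. By the hypothesis and continuity of inversion, the operator $\Pi_\sharp \Pi_{\mathbb{L}(q)} B(q_0) \in \mathcal{L}(\mathbb{H}_\sharp)$ remains invertible and depends continuously on $q$ in some neighborhood of $q_0$. The key algebraic identity is then
\begin{equation*}
B(q) = \Pi_{\mathbb{L}(q)} B(q_0) \bigl(\Pi_\sharp \Pi_{\mathbb{L}(q)} B(q_0)\bigr)^{-1},
\end{equation*}
valid on that neighborhood: the right-hand side takes values in $\operatorname{Ran} \Pi_{\mathbb{L}(q)} = \mathbb{L}(q)$ and, composed with $\Pi_\sharp$ on the left, reduces to $I_{\mathbb{H}_\sharp}$, so by Lemma~\ref{LEM: DirectSumDecompositionCorrective} it must coincide with the inverse of $\Pi_\sharp|_{\mathbb{L}(q)}$, which is exactly $B(q)$. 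Continuity of $B(q)$, and hence of $M(q) = \Pi_\flat B(q)$, near $q_0$ then follows, and since $q_0$ was arbitrary we are done. The subtle point is that the target space $\mathbb{L}(q)$ itself varies with $q$, so one cannot directly speak of the continuity of $(\Pi_\sharp|_{\mathbb{L}(q)})^{-1}$; the trick of inserting the fixed reference $B(q_0)$ reduces the task to inverting a continuous family of operators on the fixed space $\mathbb{H}_\sharp$, where continuity of operator inversion is standard.
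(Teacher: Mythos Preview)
Your proof is correct. For the implication $1) \Rightarrow 2)$ your argument is in fact identical to the paper's once one unwinds the notation: the paper sets $T(q)z_\sharp := \Pi_{\mathbb{L}(q)}(z_\sharp + M(q_0)z_\sharp)$, which is precisely your $\Pi_{\mathbb{L}(q)} B(q_0)$, and then writes $z_\sharp + M(q)z_\sharp = T(q)(\Pi_\sharp T(q))^{-1} z_\sharp$, which is your identity $B(q) = \Pi_{\mathbb{L}(q)} B(q_0) (\Pi_\sharp \Pi_{\mathbb{L}(q)} B(q_0))^{-1}$. So here the two proofs coincide, though your packaging via $B(q)$ is tidier.

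For $2) \Rightarrow 1)$ the approaches differ. You write down the closed formula $\Pi_{\mathbb{L}(q)} = B(q)(B(q)^* B(q))^{-1} B(q)^*$ and read off continuity from continuity of inversion; this is direct and transparent. The paper instead argues via the minimizing property of the orthogonal projector, showing that $\Pi_{\mathbb{L}(q)}z$ is uniformly approximated by an element of $\mathbb{L}(q_0)$ and conversely, and deduces convergence of the projectors from that. Your route is shorter and avoids the somewhat informal approximation argument in the paper; the paper's route, on the other hand, does not need the explicit projector formula and works from first principles. Either way the conclusion is the same.
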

\begin{proof}
	$1) \Rightarrow 2)$: let us fix $q_{0} \in \mathcal{Q}$ and define  $T(q)z_{\sharp} := \Pi_{\mathbb{L}(q)}( z_{\sharp} + M(q_{0})z_{\sharp}  )$ for any $z_{\sharp} \in \mathbb{H}_{\sharp}$ and $q \in \mathcal{Q}$, where $\Pi_{\mathbb{L}(q)}$ is the orthogonal projector onto $\mathbb{L}(q)$. From \eqref{EQ: MetricLagrangianGrassmanian} we have that $T(q) \to T(q_{0})$ in the operator norm of $\mathcal{L}(\mathbb{H}_{\sharp};\mathbb{H} \times \mathbb{H})$ as $q \to q_{0}$. From this, we also have the convergence $\Pi_{\sharp}T(q) \to \Pi_{\sharp}T(q_{0}) = \operatorname{Id}_{\mathbb{H}_{\sharp}}$ in the operator norm  from $\mathcal{L}(\mathbb{H}_{\sharp})$ as $q \to q_{0}$, where $\Pi_{\sharp}$ is the projector onto $\mathbb{H}_{\sharp}$ along $\mathbb{H}_{\flat}$. From this, the inverse $\Pi^{-1}_{q}$ to $\Pi_{\sharp}T(q)$ is well-defined in a neighborhood of $q_{0}$ and $\Pi^{-1}_{q} \to \Pi^{-1}_{q_{0}} = \operatorname{Id}_{\mathbb{H}_{\sharp}}$ as $q \to q_{0}$. Clearly,
	\begin{equation}
		z_{\sharp} + M(q) z_{\sharp} = T(q)\Pi^{-1}_{q}z_{\sharp}.
	\end{equation}
	for any $z_{\sharp} \in \mathbb{H}_{\sharp}$. In particular, $M(q) = \Pi_{\flat} T(q) \Pi^{-1}_{q}$ tends to $\Pi_{\flat}T(q_{0}) \Pi^{-1}_{q_{0}} = M(q_{0})$ as $q \to q_{0}$ in the operator norm.
	
	$2) \Rightarrow 1)$: fix $q_{0} \in \mathcal{Q}$. Since for any $q \in \mathcal{Q}$ the projector $\Pi_{\mathbb{L}(q)}$ is orthogonal, for any $z \in \mathbb{H} \times \mathbb{H}$ the vector $\Pi_{\mathbb{L}(q)}z$ is the unique element of $\mathbb{L}(q)$ which minimizes the difference $|z-\Pi_{\mathbb{L}(q)}z|_{\mathbb{H} \times \mathbb{H}}$. By our assumptions,
	$\Pi_{\mathbb{L}(q)}z = \Pi_{\sharp}\Pi_{\mathbb{L}(q)}z + M(q) \Pi_{\sharp}\Pi_{\mathbb{L}(q)}z$ uniformly in $|z|_{\mathbb{H} \times \mathbb{H}} \leq 1$ approximates $\Pi_{\sharp}\Pi_{\mathbb{L}(q)}z + M(q_{0})\Pi_{\sharp}\Pi_{\mathbb{L}(q)}z \in \mathbb{L}(q_{0})$ as $q \to q_{0}$. Conversely, the vector $\Pi_{\mathbb{L}(q_{0})} z$ can be approximated by $\Pi_{\sharp}\Pi_{\mathbb{L}(q_{0})}z + M(q) \Pi_{\sharp}\Pi_{\mathbb{L}(q_{0})}z \in \mathbb{L}(q)$ uniformly in $|z|_{\mathbb{H} \times \mathbb{H}} \leq 1$ as $q \to q_{0}$. Along with the previous, this implies that $\Pi_{\mathbb{L}(q)} \to \Pi_{\mathbb{L}(q_{0})}$ in the operator norm or, by \eqref{EQ: MetricLagrangianGrassmanian}, $\mathbb{L}(q) \to \mathbb{L}(q_{0})$ in $\Lambda(\mathbb{H})$ as $q \to q_{0}$. The proof is finished.
\end{proof}
\subsection{Lyapunov-Perron operators for exponential dichotomies of dual stationary systems}
\label{SUBSEC: LyapPerronStatDich}
Let $\breve{\mathbb{H}}$ be a Hilbert space that splits into a direct sum $\breve{\mathbb{H}} = \mathbb{H}_{\sharp} \oplus \mathbb{H}_{\flat}$ of two its closed subspaces $\mathbb{H}_{\sharp}$ and $\mathbb{H}_{\flat}$. Suppose we are also given with two exponentially stable $C^{0}$-semigroups\footnote{For an introduction to the theory of $C_{0}$-semigroups we refer to the monographs of K.-J.~Engel and R.~Nagel \cite{EngelNagel2000} and S.G.~Krein \cite{Krein1971}.} $G_{\sharp}$  and $G_{\flat}$ in $\mathbb{H}_{\sharp}$ and $\mathbb{H}_{\flat}$ respectively. Consider the generators $A_{\sharp} \colon \mathcal{D}(A_{\sharp}) \subset \mathbb{H}_{\sharp} \to \mathbb{H}_{\sharp}$ and $A_{\flat} \colon \mathcal{D}(A_{\flat}) \subset \mathbb{H}_{\flat} \to \mathbb{H}_{\flat}$ of $G_{\sharp}$ and $G_{\flat}$ respectively. Let $\Pi_{\sharp}$ and $\Pi_{\flat}$ be the complementary projectors associated with the decomposition $\breve{\mathbb{H}} = \mathbb{H}_{\sharp} \oplus \mathbb{H}_{\flat}$, i.e. $\operatorname{Ran}\Pi_{\sharp} = \operatorname{Ker}\Pi_{\flat} = \mathbb{H}_{\sharp}$ and  $\operatorname{Ker}\Pi_{\sharp} = \operatorname{Ran}\Pi_{\flat} = \mathbb{H}_{\flat}$. We define an operator $\breve{A}$ in $\breve{\mathbb{H}}$ by putting $\breve{A}v := A_{\sharp}\Pi_{\sharp}v - A_{\flat}\Pi_{\flat}v$ for all $v$ from $\mathcal{D}(\breve{A}) :=  \mathcal{D}(A_{\sharp}) + \mathcal{D}(A_{\flat})$.

We are interested in the study of solutions on $\mathbb{R}$ to the linear inhomogeneous problem
\begin{equation}
	\label{EQ: LyapunovPerronInhom}
	\dot{z}(t) = \breve{A}z(t) + f(t),
\end{equation}
where $f(\cdot) \in L_{2}(\mathbb{R};\breve{\mathbb{H}})$. Here solutions should be understood in terms of the decomposition $z(t) = z_{\sharp}(t) + z_{\flat}(t)$, where $z_{\sharp}(t) := \Pi_{\sharp}z(t)$ and $z_{\flat}(t) = \Pi_{\flat}z(t)$ should satisfy as mild solutions\footnote{Recall that for $A$ being the generator of a $C_{0}$-semigroup in $\breve{\mathbb{H}}$ and $f(\cdot) \in L_{2}(0,T;\breve{\mathbb{H}})$ a \textit{mild solution} to $\dot{v}(t) = Av(t) + f(t)$ on $[0,T]$ through $v_{0} \in \breve{\mathbb{H}}$ is given by the Cauchy formula
	\begin{equation}
		\label{EQ: CauchyFormulaInhomogeneous}
		v(t) = G(t)v_{0} + \int_{0}^{t}G(t-s)f(s)ds \text{ for } t \in [0,T].
	\end{equation}
} the equations
\begin{equation}
	\begin{split}
		\dot{z}_{\sharp}(t) &= A_{\sharp} z_{\sharp}(t) + \Pi_{\sharp}f(t), \\
		\dot{z}_{\flat}(t) &= -A_{\flat} z_{\flat}(t) + \Pi_{\flat}f(t).
	\end{split}
\end{equation}
By our assumptions, the first equation is generally solvable in the future only. Contrary, the second equation is generally solvable only in the past. For $f \equiv 0$, the problem admits an exponentially dichotomy, namely, solutions for $z_{0} \in \mathbb{H}_{\sharp}$ are given by $G_{\sharp}(t)z_{0}$ for $t \geq 0$ and exponentially decay as $t \to +\infty$. On the other hand, solutions for $z_{0} \in \mathbb{H}_{\flat}$ are given by $G_{\flat}(-t)z_{0}$ for $t \leq 0$ and exponentially decay as $t \to -\infty$. In other words, there exist $\varepsilon>0$ and $M>0$ such that
\begin{equation}
	\label{EQ: ExponentialDecayStationaryEstimate}
	\begin{split}
		| G_{\sharp}(t)z_{0} |_{\breve{\mathbb{H}}} &\leq M e^{-\varepsilon t} |z_{0}| \text{ for any } z_{0} \in \mathbb{H}_{\sharp} \text{ and } t \geq 0,\\
		| G_{\flat}(-t)z_{0} |_{\breve{\mathbb{H}}} &\leq M e^{\varepsilon t} |z_{0}| \text{ for any } z_{0} \in \mathbb{H}_{\flat} \text{ and } t \leq 0.
	\end{split}
\end{equation}

From this it is not hard to see validity of the following lemma.
\begin{lemma}
	\label{LEM: UniqueZeroSolutionStatDichotomies}
	For $f \equiv 0$, the zero solution is the only solution to \eqref{EQ: LyapunovPerronInhom} that belongs to $L_{2}(\mathbb{R};\breve{\mathbb{H}})$ or $C_{b}(\mathbb{R};\breve{\mathbb{H}})$.
\end{lemma}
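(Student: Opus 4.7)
The plan is to project the homogeneous equation onto the two invariant pieces $\mathbb{H}_\sharp$ and $\mathbb{H}_\flat$ and exploit the two one-sided semigroup representations that mild solutions on $\mathbb{R}$ provide, combined with the exponential estimates \eqref{EQ: ExponentialDecayStationaryEstimate}. With $f \equiv 0$ the components $z_\sharp = \Pi_\sharp z$ and $z_\flat = \Pi_\flat z$ are, respectively, mild solutions of $\dot{z}_\sharp = A_\sharp z_\sharp$ and $\dot{z}_\flat = -A_\flat z_\flat$ on all of $\mathbb{R}$. Applying the Cauchy formula \eqref{EQ: CauchyFormulaInhomogeneous} on arbitrary finite windows $[s,t] \subset \mathbb{R}$ with zero forcing yields, for every $s \leq t$,
\begin{equation*}
z_\sharp(t) = G_\sharp(t-s)\, z_\sharp(s), \qquad z_\flat(s) = G_\flat(t-s)\, z_\flat(t),
\end{equation*}
and then \eqref{EQ: ExponentialDecayStationaryEstimate} gives
\begin{equation*}
|z_\sharp(t)|_{\breve{\mathbb{H}}} \leq M e^{-\varepsilon(t-s)} |z_\sharp(s)|_{\breve{\mathbb{H}}}, \qquad |z_\flat(s)|_{\breve{\mathbb{H}}} \leq M e^{-\varepsilon(t-s)} |z_\flat(t)|_{\breve{\mathbb{H}}}.
\end{equation*}

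For a solution $z \in C_{b}(\mathbb{R};\breve{\mathbb{H}})$, the values $|z_\sharp(s)|$ and $|z_\flat(t)|$ stay uniformly bounded. Fixing $t$ and sending $s \to -\infty$ in the first inequality then forces $z_\sharp(t)=0$, and fixing $s$ and sending $t \to +\infty$ in the second forces $z_\flat(s) = 0$. Hence $z\equiv 0$.

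For $z \in L_{2}(\mathbb{R};\breve{\mathbb{H}})$, rearrange the same two inequalities to extract exponential lower bounds,
\begin{equation*}
|z_\sharp(s)|^2_{\breve{\mathbb{H}}} \geq M^{-2} e^{2\varepsilon(t-s)} |z_\sharp(t)|^2_{\breve{\mathbb{H}}}, \qquad |z_\flat(t)|^2_{\breve{\mathbb{H}}} \geq M^{-2} e^{2\varepsilon(t-s)} |z_\flat(s)|^2_{\breve{\mathbb{H}}}.
\end{equation*}
Assuming, for a contradiction, that $z_\sharp(t_0)\neq 0$ for some $t_0$ and integrating the first bound in $s$ over $(-\infty,t_0]$ would give $\int_{-\infty}^{t_0} |z_\sharp(s)|^2 ds = +\infty$, contradicting $z_\sharp \in L_{2}(\mathbb{R};\breve{\mathbb{H}})$; the symmetric argument (integrate the second bound in $t$ over $[s_0,+\infty)$) rules out any $s_0$ with $z_\flat(s_0)\neq 0$. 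Thus $z \equiv 0$ in the $L_{2}$ case as well.

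The only step that is not entirely routine is the justification of the semigroup representations $z_\sharp(t) = G_\sharp(t-s)z_\sharp(s)$ and $z_\flat(s) = G_\flat(t-s)z_\flat(t)$ on the whole line rather than on a fixed half-line; however, this is immediate from the definition of a mild solution on $\mathbb{R}$ applied on any finite subinterval $[s,t]$ with $f \equiv 0$. The time-reversal interpretation for $\dot{z}_\flat = -A_\flat z_\flat$ (i.e. setting $w(\tau) := z_\flat(-\tau)$ so that $w$ is a standard mild solution of $\dot w = A_\flat w$) makes the $z_\flat$-representation transparent and completes the reduction.
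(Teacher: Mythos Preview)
Your proof is correct and follows essentially the same route as the paper: project onto $\mathbb{H}_\sharp$ and $\mathbb{H}_\flat$, use the semigroup representations on arbitrary intervals together with the exponential bounds \eqref{EQ: ExponentialDecayStationaryEstimate}, and handle $C_b$ and $L_2$ separately. The only cosmetic difference is in the $L_2$ step: you argue by contradiction via a pointwise exponential lower bound, whereas the paper observes the translation-invariance inequality $\|z_\sharp\|_{L_2} \leq M e^{-\varepsilon t}\|z_\sharp\|_{L_2}$ for all $t\geq 0$ and lets $t\to\infty$; both conclusions are immediate.
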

\begin{proof}
	Suppose $z(\cdot)$ is a solution on $\mathbb{R}$ belonging to any of the classes. Then we have for any $s \in \mathbb{R}$ that
	\begin{equation}
		\label{EQ: LemmaUniqueZeroSolutionStationaryDich}
		\begin{split}
			z_{\sharp}(s) = G_{\sharp}(t)z_{\sharp}(-t+s) \text{ for any } t \geq 0,\\
			z_{\flat}(s) = G_{\flat}(-t)z_{\flat}(-t+s) \text{ for any } t \leq 0.
		\end{split}
	\end{equation}
	
	In the case $z(\cdot) \in C_{b}(\mathbb{R};\breve{\mathbb{H}})$, we pass to the limit in \eqref{EQ: LemmaUniqueZeroSolutionStationaryDich} as $t \to +\infty$ or $t \to -\infty$ and \eqref{EQ: ExponentialDecayStationaryEstimate} gives that $z(s) = 0$ for any $s \in \mathbb{R}$.
	
	In the case $z(\cdot) \in L_{2}(\mathbb{R};\breve{\mathbb{H}})$, we use \eqref{EQ: LemmaUniqueZeroSolutionStationaryDich}, \eqref{EQ: ExponentialDecayStationaryEstimate} and the H\"{o}lder inequality to get (here $L_{2}$ stands for $L_{2}(\mathbb{R};\breve{\mathbb{H}})$)
	\begin{equation}
		\begin{split}
			\|z_{\sharp}(\cdot)\|_{L_{2}} = \left(\int_{-\infty}^{+\infty}|G_{\sharp}(t)z_{\sharp}(-t+s)|^{2}_{\breve{\mathbb{H}}}ds\right)^{1/2} \leq \\ \leq Me^{-\varepsilon t} \left(\int_{-\infty}^{+\infty}|z_{\sharp}(-t+s)|^{2}_{\breve{\mathbb{H}}}ds\right)^{1/2} = Me^{-\varepsilon t} \|z_{\sharp}(\cdot)\|_{L_{2}}
		\end{split}
	\end{equation}
	for any $t \geq 0$. Taking it to the limit as $t \to +\infty$, we obtain $z_{\sharp}(\cdot) \equiv 0$. Analogously, one may show that $z_{\flat}(\cdot) \equiv 0$. The proof is finished.
\end{proof}

\begin{theorem}
	\label{TH: LyapunovPerronOperatorStationaryTheorem}
	In the above context, for any $f(\cdot) \in L_{2}(\mathbb{R};\breve{\mathbb{H}})$ (resp. $f(\cdot) \in C_{b}(\mathbb{R};\breve{\mathbb{H}})$) there exists a unique mild solution $z(\cdot) \in L_{2}(\mathbb{R};\breve{\mathbb{H}})$ (resp. $z(\cdot) \in C_{b}(\mathbb{R};\breve{\mathbb{H}})$) to \eqref{EQ: LyapunovPerronInhom} and it also belongs to $C_{b}(\mathbb{R};\breve{\mathbb{H}})$. It is given by
	\begin{equation}
		\label{EQ: UniqueDichSolutionFormula}
		z(t) = \int_{-\infty}^{+\infty}F_{\breve{A}}(t,s)f(s)ds =: \left(\mathfrak{L}_{\breve{A}}f\right)(t) \text{ for } t \in \mathbb{R},
	\end{equation}
	where the operator kernel (sometimes called the Green function) $F_{\breve{A}}(t,s) \in \mathcal{L}(\breve{\mathbb{H}})$ is given by
	\begin{equation}
		\label{EQ: KernelLyapPerronOperatorDescription}
		F_{\breve{A}}(t,s) = \begin{cases}
			G_{\sharp}(t-s)\Pi_{\sharp}, &\text{ for } t > s,\\
			-G_{\flat}(s-t)\Pi_{\flat}, &\text{ for } t < s.
		\end{cases}
	\end{equation}
	Moreover, the operator $\mathfrak{L}_{\breve{A}}$ which takes $f(\cdot)$ to such $z(\cdot)$ is bounded as an operator in $L_{2}(\mathbb{R};\breve{\mathbb{H}})$, $C_{b}(\mathbb{R};\breve{\mathbb{H}})$ or from $L_{2}(\mathbb{R};\breve{\mathbb{H}})$ to $C_{b}(\mathbb{R};\breve{\mathbb{H}})$. In particular,
	\begin{equation}
		\label{EQ: LyapPerronStatOperatorL2NormEstimate}
		\|\Pi_{\sharp} z(\cdot) \|_{L_{2}(\mathbb{R};\breve{\mathbb{H}})} \leq \frac{M}{\varepsilon} \text{ and } \|\Pi_{\sharp} z(\cdot) \|_{L_{2}(\mathbb{R};\breve{\mathbb{H}})} \leq \frac{M}{\varepsilon},
	\end{equation}
	where $M$ anb $\varepsilon$ are given by \eqref{EQ: ExponentialDecayStationaryEstimate}.
\end{theorem}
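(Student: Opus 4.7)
The plan is to define $z(\cdot)$ directly by the convolution formula \eqref{EQ: UniqueDichSolutionFormula}, verify that its components $z_\sharp = \Pi_\sharp z$ and $z_\flat = \Pi_\flat z$ are mild solutions of the decoupled equations, and then read off the norm estimates from Young's and Hölder's inequalities applied to the two one-sided convolutions. Uniqueness then comes for free from Lemma \ref{LEM: UniqueZeroSolutionStatDichotomies}.

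First I would show that the two one-sided Bochner integrals
\begin{equation*}
    z_\sharp(t) := \int_{-\infty}^{t} G_\sharp(t-s)\Pi_\sharp f(s)\, ds, \qquad
    z_\flat(t) := -\int_{t}^{+\infty} G_\flat(s-t)\Pi_\flat f(s)\, ds
\end{equation*}
are well-defined for every $t \in \mathbb{R}$ and any $f \in L_2(\mathbb{R};\breve{\mathbb{H}}) \cup C_b(\mathbb{R};\breve{\mathbb{H}})$. Absolute convergence follows from the exponential bounds \eqref{EQ: ExponentialDecayStationaryEstimate}: in the $C_b$ case one uses $\int_{0}^{\infty} M e^{-\varepsilon\tau}\,d\tau = M/\varepsilon$, while in the $L_2$ case one applies the Cauchy--Schwarz inequality against $M e^{-\varepsilon\tau} \in L_2(0,\infty)$. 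These same estimates yield the pointwise bound $|z_\sharp(t)|_{\breve{\mathbb{H}}} \leq \frac{M}{\sqrt{2\varepsilon}}\|\Pi_\sharp f\|_{L_2}$ and its analogue for $z_\flat$, proving boundedness of $\mathfrak{L}_{\breve{A}}$ as an operator from $L_2(\mathbb{R};\breve{\mathbb{H}})$ to $C_b(\mathbb{R};\breve{\mathbb{H}})$; in particular $z(\cdot) \in C_b(\mathbb{R};\breve{\mathbb{H}})$ in either starting case.

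Next I would verify that $z_\sharp$ is a mild solution of $\dot z_\sharp = A_\sharp z_\sharp + \Pi_\sharp f$ on any interval $[t_0,t]$. Splitting the integral at $t_0$ and pulling $G_\sharp(t-t_0)$ out of the Bochner integral via its boundedness and the semigroup identity $G_\sharp(t-s) = G_\sharp(t-t_0)G_\sharp(t_0-s)$ gives exactly the Cauchy formula \eqref{EQ: CauchyFormulaInhomogeneous}. An analogous computation, after the time reversal $\tau = -t$ which turns $-A_\flat$ into the generator $A_\flat$ of the given semigroup on $\mathbb{H}_\flat$, shows that $z_\flat$ is a mild solution of $\dot z_\flat = -A_\flat z_\flat + \Pi_\flat f$ backwards in time. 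Hence $z = z_\sharp + z_\flat$ is a solution to \eqref{EQ: LyapunovPerronInhom} in the sense prescribed in the statement.

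For the $L_2$-estimates I would view $z_\sharp$ and $z_\flat$ as convolutions of $\Pi_\sharp f$ and $\Pi_\flat f$ with the operator-valued kernels $k_\sharp(\tau) := G_\sharp(\tau)\mathbf{1}_{\tau \geq 0}$ and $k_\flat(\tau) := G_\flat(-\tau)\mathbf{1}_{\tau \leq 0}$ respectively, both of whose operator norms are dominated by $Me^{-\varepsilon|\tau|} \in L_1(\mathbb{R})$ with $L_1$-norm $M/\varepsilon$. Young's inequality then delivers the two inequalities in \eqref{EQ: LyapPerronStatOperatorL2NormEstimate} (one of them is evidently meant to read $\|\Pi_\flat z\|_{L_2}$); the $C_b \to C_b$ bound is the same calculation with $L_1 * L_\infty \subset L_\infty$. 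Finally, if $z^{(1)}, z^{(2)}$ are two solutions of \eqref{EQ: LyapunovPerronInhom} in the same class, then $z^{(1)}-z^{(2)}$ solves the homogeneous equation and lies in that class, so it vanishes by Lemma \ref{LEM: UniqueZeroSolutionStatDichotomies}. The main technical point is the first one: carefully justifying the splitting-and-semigroup manipulation so that the Bochner-integral definition yields genuine componentwise mild solutions; once that is in place the rest of the statement is a direct application of Young and Cauchy--Schwarz.
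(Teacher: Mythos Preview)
Your proposal is correct and follows essentially the same route as the paper: define $z$ by the Green-function formula, verify it is a mild solution componentwise, read off boundedness from the exponential kernel, and invoke Lemma~\ref{LEM: UniqueZeroSolutionStatDichotomies} for uniqueness. The only cosmetic differences are that the paper first \emph{derives} \eqref{EQ: UniqueDichSolutionFormula} by assuming a solution exists and passing to the limit in the Cauchy formula \eqref{EQ: InhomogeneousCauchyFormulaStationaryDich} (whereas you posit it and verify), and that for \eqref{EQ: LyapPerronStatOperatorL2NormEstimate} the paper unrolls Young's inequality by hand via Cauchy--Schwarz and Fubini rather than citing it.
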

\begin{proof}
	From Lemma \ref{LEM: UniqueZeroSolutionStatDichotomies} we immediately get the uniqueness.
	
	To see how the formula \eqref{EQ: UniqueDichSolutionFormula} can be obtained, let us assume that the required solution $z(\cdot)$ exists. Then we use the following inhomogeneous analog of \eqref{EQ: LemmaUniqueZeroSolutionStationaryDich} derived from the Cauchy formula \eqref{EQ: CauchyFormulaInhomogeneous} as
	\begin{equation}
		\label{EQ: InhomogeneousCauchyFormulaStationaryDich}
		\begin{split}
			z_{\sharp}(s) = G_{\sharp}(t)z_{\sharp}(-t+s) + \int_{-t+s}^{s}G_{\sharp}(s-\theta)\Pi_{\sharp}f(\theta)d\theta \text{ for any } t \geq 0,\\
			z_{\flat}(s) = G_{\flat}(-t)z_{\flat}(-t+s) - \int_{s}^{-t+s}G_{\flat}(\theta-s)\Pi_{\flat}f(\theta)d\theta \text{ for any } t \leq 0.
		\end{split}
	\end{equation}
	Now we pass to the limit as $t \to +\infty$ (resp. $t \to -\infty$) in the first (resp. second) identity from \eqref{EQ: InhomogeneousCauchyFormulaStationaryDich}. In the case $z(\cdot) \in L_{2}(\mathbb{R};\breve{\mathbb{H}})$ we do it for a sequence $t=t_{k}$, where $k=1,2,\ldots$, for which $z(t_{k})$ is uniformly bounded. Since the first terms decay to $0$ due to \eqref{EQ: ExponentialDecayStationaryEstimate}, this gives \eqref{EQ: UniqueDichSolutionFormula}. 
	
	It is not hard to see that \eqref{EQ: UniqueDichSolutionFormula} defines a bounded operator in any of the mentioned senses. Moreover, it provides indeed a solution to \eqref{EQ: LyapunovPerronInhom}. Let us illustrate this by means of \eqref{EQ: LyapPerronStatOperatorL2NormEstimate}. Indeed, by \eqref{EQ: KernelLyapPerronOperatorDescription}, \eqref{EQ: ExponentialDecayStationaryEstimate} and the Cauchy inequality, we have
	\begin{equation}
		\begin{split}
			\vline\int_{-\infty}^{+\infty}\Pi_{\sharp}F_{\breve{A}}(t,s)f(s)ds \vline &\leq M \int_{-\infty}^{t}e^{-\varepsilon(t-s)}|\Pi_{\sharp}f(s)|_{\mathbb{H}}ds \leq \\ &\leq
			\frac{M}{\sqrt{\varepsilon}} \left( \int_{-\infty}^{t}e^{-\varepsilon(t-s)}|\Pi_{\sharp}f(s)|^{2}_{\mathbb{H}}ds  \right)^{1/2} 
		\end{split}
	\end{equation}
	Consequently, from the Fubini theorem we have
	\begin{equation}
		\begin{split}
			\int_{-\infty}^{+\infty}|\Pi_{\sharp}z(t) |^{2}_{\mathbb{H}} dt \leq \frac{M^{2}}{\varepsilon}\int_{-\infty}^{+\infty}\int_{-\infty}^{t}e^{-\varepsilon(t-s)}|\Pi_{\sharp}f(s)|^{2}_{\mathbb{H}}ds dt = \\ =
			\frac{M^{2}}{\varepsilon}\int_{-\infty}^{+\infty}|\Pi_{\sharp}f(s)|^{2}_{\mathbb{H}}ds\int_{-\infty}^{t}e^{-\varepsilon(t-s)} dt = \frac{M^{2}}{\varepsilon^{2}} \| \Pi_{\sharp} f(s) \|^{2}_{L_{2}(\mathbb{R};\breve{\mathbb{H}})}.
		\end{split}
	\end{equation}
	Analogously we obtain estimates by applying $\Pi_{\flat}$. The proof is finished. 
\end{proof}

We call the constructed above operator $\mathfrak{L}_{\breve{A}}$ \textit{the Lyapunov-Perron operator} associated with $\breve{A}$. However, under such a name it is usually understood compositions of $\mathfrak{L}_{\breve{A}}$ with certain operators arising in perturbation problems.

Let us establish the following technical lemma.
\begin{lemma}
	\label{LEM: FourierSolutionOnRLemma}
	Let $f(\cdot) \in L_{2}(\mathbb{R};\breve{\mathbb{H}})$ and put $z = \mathfrak{L}_{\breve{A}}f$. Then the Fourier transform $\widehat{z}(\cdot)$ of $z(\cdot)$ for almost all $\omega \in \mathbb{R}$ satisfies $\widehat{z}(\omega) \in \mathcal{D}(\breve{A})$ and
	\begin{equation}
		\label{EQ: FourierTransformBoundedSolution}
		i \omega \widehat{z}(\omega) = \breve{A} \widehat{z}(\omega) + \widehat{f}(\omega).
	\end{equation}
\end{lemma}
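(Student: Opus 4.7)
The plan is to decompose $z = z_\sharp + z_\flat$ using the dichotomy and recognize each piece as a convolution, whose Fourier transform can be identified via the Laplace representation of the resolvent.

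First, I would write, in accordance with \eqref{EQ: UniqueDichSolutionFormula}--\eqref{EQ: KernelLyapPerronOperatorDescription},
\[
z_{\sharp}(t)=(K_{\sharp}*f)(t),\qquad z_{\flat}(t)=(K_{\flat}*f)(t),
\]
where $K_{\sharp}(t):=G_{\sharp}(t)\Pi_{\sharp}\mathbf{1}_{t\geq 0}$ and $K_{\flat}(t):=-G_{\flat}(-t)\Pi_{\flat}\mathbf{1}_{t\leq 0}$. Thanks to the exponential decay in \eqref{EQ: ExponentialDecayStationaryEstimate}, the kernels $K_{\sharp}$ and $K_{\flat}$ belong to $L_{1}(\mathbb{R};\mathcal{L}(\breve{\mathbb{H}}))\cap L_{2}(\mathbb{R};\mathcal{L}(\breve{\mathbb{H}}))$, so the operator-valued Young convolution inequality guarantees $z_\sharp,z_\flat \in L_2(\mathbb{R};\breve{\mathbb{H}})$ and by the convolution theorem (established first on a dense class such as $f\in C_c(\mathbb{R};\breve{\mathbb{H}})$ via Fubini, then extended by continuity using Plancherel and the $L_2$-boundedness of $\mathfrak{L}_{\breve{A}}$ from Theorem \ref{TH: LyapunovPerronOperatorStationaryTheorem}) one has for a.e.\ $\omega\in\mathbb{R}$
\[
\widehat{z_{\sharp}}(\omega)=\widehat{K_{\sharp}}(\omega)\widehat{f}(\omega),\qquad \widehat{z_{\flat}}(\omega)=\widehat{K_{\flat}}(\omega)\widehat{f}(\omega).
\]

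Next, since $G_{\sharp}$ is exponentially stable, its growth bound is strictly negative, hence $i\omega$ belongs to the resolvent set of $A_{\sharp}$ for every $\omega\in\mathbb{R}$, and the standard Laplace representation of the resolvent gives
\[
\widehat{K_{\sharp}}(\omega)=\int_{0}^{+\infty}e^{-i\omega t}G_{\sharp}(t)\,dt\,\Pi_{\sharp}=(i\omega-A_{\sharp})^{-1}\Pi_{\sharp}.
\]
An analogous computation (with the substitution $t\mapsto-t$) yields $\widehat{K_{\flat}}(\omega)=-(-i\omega-A_{\flat})^{-1}\Pi_{\flat}$. In particular, $\widehat{z_{\sharp}}(\omega)\in\mathcal{D}(A_{\sharp})$ and $\widehat{z_{\flat}}(\omega)\in\mathcal{D}(A_{\flat})$ a.e., so $\widehat{z}(\omega)=\widehat{z_{\sharp}}(\omega)+\widehat{z_{\flat}}(\omega)\in\mathcal{D}(\breve{A})$ with $\Pi_{\sharp}\widehat{z}(\omega)=\widehat{z_{\sharp}}(\omega)$ and $\Pi_{\flat}\widehat{z}(\omega)=\widehat{z_{\flat}}(\omega)$.

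Finally, applying $\breve{A}$ and using the resolvent identities $A_{\sharp}(i\omega-A_{\sharp})^{-1}=i\omega(i\omega-A_{\sharp})^{-1}-\operatorname{Id}_{\mathbb{H}_{\sharp}}$ and $A_{\flat}(-i\omega-A_{\flat})^{-1}=-i\omega(-i\omega-A_{\flat})^{-1}-\operatorname{Id}_{\mathbb{H}_{\flat}}$, one computes
\[
\breve{A}\widehat{z}(\omega)=A_{\sharp}\widehat{z_{\sharp}}(\omega)-A_{\flat}\widehat{z_{\flat}}(\omega)=i\omega\widehat{z}(\omega)-\Pi_{\sharp}\widehat{f}(\omega)-\Pi_{\flat}\widehat{f}(\omega)=i\omega\widehat{z}(\omega)-\widehat{f}(\omega),
\]
which is \eqref{EQ: FourierTransformBoundedSolution}. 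The main technical point to be careful with is the passage from the scalar to the operator-valued convolution theorem together with the vector-valued Plancherel identity; I would handle this by first establishing the identity for $f$ in the dense subspace $C_{c}(\mathbb{R};\breve{\mathbb{H}})$ (where Fubini applies directly to $(K*f)\hat{\,}$) and then extending by continuity, using that both sides depend continuously on $f\in L_{2}(\mathbb{R};\breve{\mathbb{H}})$.
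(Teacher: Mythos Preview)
Your argument is correct and takes a genuinely different route from the paper. The paper proceeds by an approximation scheme: it fixes a finite window $[-T,T]$, replaces $z_{\sharp}$ there by classical solutions $v_{k}$ (obtained from smooth data), integrates by parts to get a truncated Fourier identity with boundary terms $e^{\mp i\omega T}v_{k}(\pm T)$, passes $k\to\infty$, and then lets $T\to\infty$ along a sequence chosen so that the boundary terms vanish and the truncated transforms converge almost everywhere. Your approach instead exploits the translation invariance directly: you recognise $\mathfrak{L}_{\breve{A}}$ as convolution with an $L_{1}$ operator-valued kernel, invoke the convolution theorem, and identify the resulting multiplier with the resolvent via the Laplace representation of the semigroup. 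This is shorter and more conceptual, and it makes transparent why the frequency-domain identity holds---it is precisely the resolvent equation. The paper's method, by contrast, avoids any appeal to operator-valued Fourier/convolution machinery and works entirely with classical solutions and elementary limits; it is more self-contained but also more laborious. One small point of care in your write-up: with the paper's $1/\sqrt{2\pi}$ normalisation the convolution theorem reads $\widehat{K*f}=\bigl(\int e^{-i\omega t}K(t)\,dt\bigr)\widehat{f}$, i.e.\ the kernel transform appearing is the \emph{unnormalised} one, which is exactly what you compute; just make sure the notation $\widehat{K_{\sharp}}$ is understood in that sense.
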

\begin{proof}
	Since the Fourier transform is linear and commutes with the operators in $L_{2}(\mathbb{R};\breve{\mathbb{H}})$ given by the pointwise action of $\Pi_{\sharp}$ or $\Pi_{\flat}$, it is sufficient to show analogs of \eqref{EQ: FourierTransformBoundedSolution} for the components $z_{\sharp}(\cdot) = \Pi_{\sharp} (\cdot)z$ and $z_{\flat}(\cdot) = \Pi_{\flat}z(\cdot)$.
	
	Let $T>0$ be fixed. We are going to approximate $z_{\sharp}(\cdot)$ on $[-T,T]$ by smooth solutions. For $k=1,2,\ldots$, there exist $v_{k}(-T) \in \mathcal{D}(A_{\sharp})$ tending to $z_{\sharp}(-T)$ in $\mathbb{H}$ and $g_{k}(\cdot) \in C^{1}([-T,T];\mathbb{H})$ tending to $\Pi_{\sharp}
	f(\cdot)$ in $L_{2}(-T,T;\mathbb{H})$ as $k \to \infty$. By Theorem 6.5 from Chapter I in \cite{Krein1971}, the corresponding solution $v_{k}(\cdot)$ of
	\begin{equation}
		\label{EQ: FourierLemmaApprox}
		\dot{v}_{k}(t) = A_{\sharp} v_{k}(t) + g_{k}(t)
	\end{equation}
	on $[-T,T]$ with the given $v_{k}(-T)$ is classical, i.e. $v_{k}(\cdot) \in C^{1}([-T,T];\mathbb{H})$, $v_{k}(\cdot) \in C([-T,T];\mathcal{D}(A_{\sharp}))$, and $v_{k}(t)$ satisfies \eqref{EQ: FourierLemmaApprox} for any $t \in [-T,T]$. Integrating by parts, we get
	\begin{equation}
		\int_{-T}^{T}e^{-i\omega t} \dot{v}_{k}(t) = e^{-i\omega T} v_{k}(T) - e^{i \omega T} v_{k}(-T) + i\omega\int_{-T}^{T} e^{-i \omega t}v_{k}(t)dt.
	\end{equation}
	Putting 
	\begin{equation}
		\widehat{v}^{T}_{k}(\omega) := \frac{1}{\sqrt{2\pi}} \int_{-T}^{T}e^{-i\omega t}v_{k}(t)dt \text{ and } \widehat{g}^{T}_{k}(\omega):= \frac{1}{\sqrt{2\pi}} \int_{-T}^{T}e^{-i\omega t}g_{k}(t)dt,
	\end{equation}
	we get for any $\omega \in \mathbb{R}$ that
	\begin{equation}
		\label{EQ: FourierLemmaApproximateEqs}
		i \omega \widehat{v}^{T}_{k}(\omega) = A_{\sharp} \widehat{v}^{T}_{k}(\omega) + \widehat{g}^{T}_{k}(\omega) +  \frac{1}{\sqrt{2\pi}} \left(e^{-i\omega T} v_{k}(T) - e^{i \omega T} v_{k}(-T) \right).
	\end{equation}
	From this it is clear that $\widehat{v}^{T}_{k}(\cdot) \to \widehat{v}^{T}(\cdot)$ in $C([-T,T];\mathcal{D}(A_{\sharp}))$ as $k \to \infty$. Passing to the limit as $k \to \infty$ in \eqref{EQ: FourierLemmaApproximateEqs}, we get
	\begin{equation}
		i \omega \widehat{z}^{T}_{\sharp}(\omega) = A_{\sharp}\widehat{z}^{T}_{\sharp}(\omega) + \Pi_{\sharp}\widehat{f}^{T}(\omega) + \frac{1}{\sqrt{2\pi}} \left(e^{-i\omega T} z_{\sharp}(T) - e^{i \omega T} z_{\sharp}(-T) \right),
	\end{equation}
	where
	\begin{equation}
		\label{EQ: FourierLemmaApproximateEqsZsharp}
		\widehat{z}^{T}_{\sharp}(\omega) := \frac{1}{\sqrt{2\pi}} \int_{-T}^{T}e^{-i\omega t}z_{\sharp}(t)dt \text{ and } \widehat{f}^{T}(\omega):= \frac{1}{\sqrt{2\pi}} \int_{-T}^{T}e^{-i\omega t}f(t)dt.
	\end{equation}
	By continuity of the Fourier transform, $\widehat{z}^{T}_{\sharp}(\cdot)$ tends to $\widehat{z}_{\sharp}(\cdot)$ in $L_{2}(\mathbb{R};\mathbb{H})$ as $T \to +\infty$. From \eqref{EQ: FourierLemmaApproximateEqsZsharp} we also have that the convergence holds in $L_{2, loc}(\mathbb{R};\mathcal{D}(A_{\sharp}))$. Let us for any $k=1,2,\ldots$ choose $T=T_{k}$ such that $z_{\sharp}(T_{k})$ and $z_{\sharp}(-T_{k})$ tends to $0$ and $\widehat{z}^{T}_{\sharp}(\omega)$ tends to $\widehat{z}_{\sharp}(\omega)$ as $k \to \infty$ for almost all $\omega \in \mathbb{R}$. Passing in \eqref{EQ: FourierLemmaApproximateEqsZsharp} with $T=T_{k}$ to the limit  as $k \to \infty$ gives the required relation for $\widehat{z}_{\sharp}(\cdot)$. For $\widehat{z}_{\flat}$, one may apply similar arguments. The proof is finished.
\end{proof}

Now we will describe our main example to which the above construction will be applied. For this, let $\mathbb{H}$ be a Hilbert space and $A \colon \mathcal{D}(A) \subset \mathbb{H} \to \mathbb{H}$ be the generator of a $C_{0}$-semigroup $G$ which admits an exponentially dichotomy of finite rank $j \geq 0$, i.e. there exists a decomposition $\mathbb{H} = \mathbb{H}^{s}_{A} \oplus \mathbb{H}^{u}_{A}$ with $\dim \mathbb{H}^{u}_{A} = j$ and the corresponding complementary spectral projectors $\Pi^{s}_{A}$ and $\Pi^{u}_{A}$ onto $\mathbb{H}^{s}_{A}$ and $\mathbb{H}^{u}_{A}$ respectively such that
\begin{equation}
	\label{EQ: SpectralProjectorsInvarianceStationary}
	\Pi^{s}_{A}G(t)=G(t)\Pi^{s}_{A} \text{ and } \Pi^{u}_{A}G(t) = G(t) \Pi^{u}_{A} \text{ for any } t \geq 0
\end{equation}
or, in other words, the spaces $\mathbb{H}^{s}_{A}$ and $\mathbb{H}^{u}_{A}$ are invariant w.r.t. $G$, and the $C_{0}$-semigroups $G_{\sharp}$ and $G_{\flat}$ on $\mathbb{H}_{\sharp} := \mathbb{H}^{s}_{A}$ and $\mathbb{H}_{\flat} := \mathbb{H}^{u}_{A}$ given by $G_{\sharp}(t)z_{0} := G(t)z_{0}$ and $G_{\flat}(t)z_{0}:=G(-t)z_{0}$ for $t \geq 0$ and appropriate $z_{0}$ admit an estimate as in \eqref{EQ: ExponentialDecayStationaryEstimate}. Here $G(-t)$ makes sense as the inverse of $G(t)$ restricted to the finite-dimensional subspace $\mathbb{H}^{u}_{A}$.

It is well-known that the adjoint $A^{*}$ of $A$ is the generator of a $C_{0}$-semigroup $G^{*}$ in $\mathbb{H}$ given by the adjoints $G^{*}(t)$ of $G(t)$ for any $t \geq 0$. By taking the adjoints in \eqref{EQ: SpectralProjectorsInvarianceStationary}, one may see that $G^{*}$ also admits an exponential dichotomy of rank $j$ with the spectral projectors $\Pi^{s}_{A^{*}}$ and $\Pi^{u}_{A^{*}}$ given by the adjoints of $\Pi^{s}_{A}$ and $\Pi^{u}_{A}$. We also put $\mathbb{H}^{s}_{A^{*}} := \operatorname{Ran}\Pi^{s}_{A^{*}}$ and $\mathbb{H}^{u}_{A^{*}} := \operatorname{Ran}\Pi^{u}_{A^{*}}$.

We define an operator $\breve{A}$ on $\breve{\mathbb{H}} := \mathbb{H} \times \mathbb{H}$ by
\begin{equation}
	\label{EQ: OperatorABreveFromADef}
	\breve{A} (v,\eta) := Av - A^{*}\eta \text{ for all } (v,\eta) \in \mathcal{D}(\breve{A}):=\mathcal{D}(A) \times \mathcal{D}(A^{*}).
\end{equation}

Note that this operator satisfies the assumptions posed above \eqref{EQ: LyapunovPerronInhom} with $\mathbb{H}_{\sharp} := \{ (v,\eta) \in \mathbb{H} \times \mathbb{H} \ | \ v \in \mathbb{H}^{s}_{A}, \eta \in \mathbb{H}^{u}_{A^{*}} \}$ and $\mathbb{H}_{\flat}:= \{ (v,\eta) \in \mathbb{H} \times \mathbb{H} \ | \ v \in \mathbb{H}^{u}_{A}, \eta \in \mathbb{H}^{s}_{A^{*}} \}$.

Let us rewrite the problem \eqref{EQ: LyapunovPerronInhom} in terms of $z=(v,\eta)$ as
\begin{equation}
	\label{EQ: StatPairedSystemNonAutonomousProblem}
	\begin{split}
		\dot{v}(t) &= Av(t) + f(t),\\
		\dot{\eta}(t) &= -A^{*}\eta(t) + g(t),
	\end{split}
\end{equation}
where $f(\cdot), g(\cdot) \in L_{2}(\mathbb{R};\mathbb{H})$. Recall that any single problem (i.e. w.r.t. $v$ or $\eta$) as well as the coupled problem admits an exponential dichotomy. It is clear that in terms of Theorem \ref{TH: LyapunovPerronOperatorStationaryTheorem} we have
\begin{equation}
	\mathfrak{L}_{\breve{A}}(f,g) = ( \mathfrak{L}_{A}f, \mathfrak{L}_{-A^{*}}g ).
\end{equation}
Let $F_{A}$ and $F_{-A^{*}}$ be the kernels of $\mathfrak{L}_{A}$ and $\mathfrak{L}_{-A^{*}}$ respectively. We have the following proposition.
\begin{proposition}
	In the above context, $F_{-A^{*}}(t,s)$ is the adjoint of $F^{*}_{A}(s,t)$ in $\mathbb{H}$ for any $t \not= s$. In particular, the operator $\mathfrak{L}_{-A^{*}}$ is the adjoint of $\mathfrak{L}_{A}$ in $L_{2}(\mathbb{R};\mathbb{H})$.
\end{proposition}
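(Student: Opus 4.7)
The plan is to obtain both assertions by direct computation, with the operator-level statement following from the kernel identity via a Fubini argument. The preliminary step is to unpack the exponential dichotomy structure of $-A^{*}$: taking adjoints in \eqref{EQ: SpectralProjectorsInvarianceStationary} shows that the adjoint semigroup $G^{*}$ admits an exponential dichotomy of rank $j$ with spectral projectors $\Pi^{s}_{A^{*}}=(\Pi^{s}_{A})^{*}$ and $\Pi^{u}_{A^{*}}=(\Pi^{u}_{A})^{*}$, and passing from $A^{*}$ to $-A^{*}$ swaps the roles of the two summands: for the generator $-A^{*}$ the stable subspace is the finite-dimensional $\mathbb{H}^{u}_{A^{*}}$ (where $A^{*}$ grows, so $-A^{*}$ decays) and the unstable subspace is $\mathbb{H}^{s}_{A^{*}}$.

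Next I would substitute these data into the general kernel formula \eqref{EQ: KernelLyapPerronOperatorDescription} twice. On the one hand this produces the two explicit cases for $F_{A}(s,t)$: on $\{s>t\}$ it equals $G(s-t)\Pi^{s}_{A}$, and on $\{s<t\}$ it equals the term supported on $\mathbb{H}^{u}_{A}$ involving the backward flow of $G$ on that finite-dimensional summand. On the other hand, applied with $-A^{*}$ in place of $A$ and with the stable/unstable decomposition identified above, it produces $F_{-A^{*}}(t,s)$ expressed through $G^{*}(s-t)$ and $\Pi^{s/u}_{A^{*}}$, where on $\mathbb{H}^{u}_{A^{*}}$ one again deals with a backward flow. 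Taking the adjoint of $F_{A}(s,t)$ term by term, using that $(G(\tau))^{*}=G^{*}(\tau)$ for $\tau\geq 0$, that $(\Pi^{s/u}_{A})^{*}=\Pi^{s/u}_{A^{*}}$, and that on the finite-dimensional invariant summand the identity $(T^{-1})^{*}=(T^{*})^{-1}$ transports the backward flow of $G$ on $\mathbb{H}^{u}_{A}$ to the backward flow of $G^{*}$ on $\mathbb{H}^{u}_{A^{*}}$, should then match the formula for $F_{-A^{*}}(t,s)$ case by case, yielding the claimed pointwise identity for $t\neq s$.

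The statement about $\mathfrak{L}_{-A^{*}}$ being the adjoint of $\mathfrak{L}_{A}$ in $L_{2}(\mathbb{R};\mathbb{H})$ then follows by Fubini. Namely, for $f,g\in L_{2}(\mathbb{R};\mathbb{H})$ the pairing $\langle\mathfrak{L}_{A}f,g\rangle_{L_{2}(\mathbb{R};\mathbb{H})}$ unfolds via \eqref{EQ: UniqueDichSolutionFormula} into a double integral of $\langle F_{A}(t,s)f(s),g(t)\rangle_{\mathbb{H}}$, which, thanks to the $L_{2}$-boundedness from Theorem \ref{TH: LyapunovPerronOperatorStationaryTheorem} together with the exponential decay \eqref{EQ: ExponentialDecayStationaryEstimate} of the kernel, rewrites (by Fubini and moving the adjoint across the inner product) as $\int_{\mathbb{R}}\langle f(s),\int_{\mathbb{R}}(F_{A}(t,s))^{*}g(t)\,dt\rangle_{\mathbb{H}}\,ds$; after relabeling variables and applying the kernel identity, the inner integral is exactly $(\mathfrak{L}_{-A^{*}}g)(s)$. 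The whole argument is bookkeeping; the one delicate point is the adjoint computation in the case $s<t$, where one must verify that taking the adjoint commutes with the backward-flow interpretation on the finite-dimensional summand, which is where the identity $(T^{-1})^{*}=(T^{*})^{-1}$ is used.
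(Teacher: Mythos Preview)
Your proposal is correct and follows essentially the same approach as the paper: the paper's proof is a two-sentence pointer to \eqref{EQ: KernelLyapPerronOperatorDescription} for the kernel identity and to a Fubini computation for the operator-level statement, and you have simply unpacked both of those steps (including the swap of stable and unstable subspaces when passing from $A^{*}$ to $-A^{*}$ and the handling of the backward flow on the finite-dimensional summand).
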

\begin{proof}
	The first statement follows directly from \eqref{EQ: KernelLyapPerronOperatorDescription} and the second statement follows from straightforward computations of the adjoint of the integral operator $\mathfrak{L}_{A}$ by utilizing the Fubini theorem. The proof is finished.
\end{proof}
\section{Stationary case: the Frequency Condition}
 \label{SEC: LagrangeBundlesStationaryCase}
In this section, we are going to discuss the presence of nonoscillating Lagrange subspaces for stationary Hamiltonian systems in $\breve{\mathbb{H}} := \mathbb{H} \times \mathbb{H}$ given by
\begin{equation}
	\label{EQ: StatHamiltonianSystem}
	\begin{pmatrix}
		\dot{v}(t)\\
		\dot{\eta}(t)
	\end{pmatrix} = H
	\begin{pmatrix}
		v(t)\\
		\eta(t)	
	\end{pmatrix},
\end{equation}
where for a real Hilbert space $\mathbb{U}$ and operators $B \in \mathcal{L}(\mathbb{U};\mathbb{H})$, $F_{1} \in \mathcal{L}(\mathbb{H})$, $F_{2} \in \mathcal{L}(\mathbb{H};\mathbb{U})$ and $F_{3} \in \mathcal{L}(\mathbb{H})$ with $F_{1}$, $F_{3}$ being self-adjoint and $F_{3}$ being invertible and positive-definite\footnote{In fact, the assumptions on the invertibility of $F_{3}$ and positive-definiteness are automatically satisfied under the Frequency Condition.} (i.e. $F_{3} \geq \delta I$ for some $\delta>0$), we have
\begin{equation}
	\label{EQ: HamiltonianStationary}
	H := \begin{pmatrix}
		\widehat{A} & B F^{-1}_{3} B^{*}\\
		F_{1} - F^{*}_{2} F^{-1}_{3}F_{2} & -\widehat{A}^{*}
	\end{pmatrix} = \begin{pmatrix}
	H_{1} \ H_{3}\\
	H_{2} \ -H^{*}_{1}
	\end{pmatrix},
\end{equation}
where $\widehat{A} = A - B F^{-1}_{3} F_{2}$ and $A$ as in \eqref{EQ: OperatorABreveFromADef}, i.e. 

\begin{description}[before=\let\makelabel\descriptionlabel]
	\item[\textbf{(DICH)}\refstepcounter{desccount}\label{DESC: DICHSTATIONARY}] $A$ generates a $C_{0}$-semigroup in $\mathbb{H}$ admitting an exponential dichotomy of finite\footnote{For the statement of Theorem \ref{TH: DichotomyStatOptimizationLP}, this requirement of finiteness can be formally relaxed. Namely, it is used in the proof only to obtain uniform bounds on the resolvent of $A$. However, obtaining such properties (including the exponential dichotomy) in known examples is always related to some kind of asymptotic compactness of the semigroup and the possibility of distinguishing a finite part of the spectrum (see \cite{Anikushin2020FreqDelay}). Moreover, the finiteness of $j$ is essential for further investigations concerned with nonoscillation (see Corollary \ref{COR: StatDichLagrangeSubsFredholmPair} and below).} rank $j \geq 0$.
\end{description}

Let $\langle \cdot, \cdot \rangle_{\breve{\mathbb{H}}}$ denote the inner product in $\breve{\mathbb{H}}$ and recall the complex structure $J$ given by $J(v,\eta) := (-\eta,v)$ for any $(v,\eta) \in \breve{\mathbb{H}}$. Clearly,
\begin{equation}
	\label{EQ: SymplecticIdentityStationary}
	J H + H^{*}J = 0 \text{ on } \mathcal{D}(A) \times \mathcal{D}(A^{*}).
\end{equation}

We understand solutions to \eqref{EQ: StatHamiltonianSystem} as mild solutions to the uncoupled system \eqref{EQ: StatPairedSystemNonAutonomousProblem} with proper $f(\cdot)$ and $g(\cdot)$. Note that we have no a priori knowledge on any structure of solutions to \eqref{EQ: StatHamiltonianSystem}. Thus, it is not clear whether
\begin{equation}
	\label{EQ: SymplecticFormPreservationStationary}
	\langle z_{1}(t), Jz_{2}(t) \rangle_{\breve{\mathbb{H}}} = \langle z_{1}(0), Jz_{2}(0) \rangle_{\breve{\mathbb{H}}} \text{ for any } t \in [0,T].
\end{equation}
holds for any mild solutions $z_{1}(t)=(v_{1}(t),\eta_{1}(t))$ and $z_{2}(t)=(v_{2}(t),\eta_{2}(t))$ to \eqref{EQ: StatHamiltonianSystem} on $[0,T]$ for some $T>0$. Clearly, \eqref{EQ: SymplecticFormPreservationStationary} follows from \eqref{EQ: SymplecticIdentityStationary} under some regularity assumptions. Below, \eqref{EQ: SymplecticFormPreservationStationary} will be proved under the presence of a certain structure which guarantee the existence of classical solutions.

\subsection{Stable Lagrange subspaces}
Let us illustrate how the Lyapunov-Perron operators can be utilized to construct stable Lagrange subspaces for \eqref{EQ: StatHamiltonianSystem}. For this, we rewrite \eqref{EQ: StatHamiltonianSystem} as
\begin{equation}
	\label{EQ: StatHamiltonianSystemRewriten}
	\begin{pmatrix}
		\dot{v}(t)\\
		\dot{\eta}(t)
	\end{pmatrix} =
	\begin{pmatrix}
		Av(t)\\
		-A^{*}\eta(t)	
	\end{pmatrix} + R
	\begin{pmatrix}
		v(t)\\
		\eta(t)	
	\end{pmatrix},
\end{equation}
where the operator matrix $R$ is given by
\begin{equation}
	\label{EQ: StatHamiltonianPerturbationMatrixR}
	R = \begin{pmatrix}
		- B F^{-1}_{3} F_{2} & B F^{-1}_{3} B^{*}\\
		F_{1} - F^{*}_{2} F^{-1}_{3}F_{2} & (B F^{-1}_{3} F_{2})^{*}
	\end{pmatrix}.
\end{equation}

Let us consider a bounded quadratic form $\mathcal{F}(v,\xi)$ of $v \in \mathbb{H}$ and $\xi \in \mathbb{U}$ given by $\mathcal{F}(v,\xi) := (F_{1}v,v)_{\mathbb{H}} + 2(F_{2}v,\xi)_{\mathbb{U}} + (F_{3}\xi,\xi)_{\mathbb{U}}$. A classical frequency condition then requires that 
\begin{equation}
	\label{EQ: FreqConditionStationaryGeneral}
	\mathcal{F}^{\mathbb{C}}(-(A-i\omega I)^{-1}B\xi,\xi) \geq \delta |\xi|^{2}_{\mathbb{U}^{\mathbb{C}}} \text{ for any } \xi \in \mathbb{U}^{\mathbb{C}} \text{ and } \omega \in \mathbb{R}.
\end{equation}
We are going to explore its geometric meaning.

Let us introduce the spaces
\begin{equation}
	\label{EQ: StatDualCouplingStableUnstableSubspaces}
	\begin{split}
		\breve{\mathbb{H}}^{s} := \{ (v,\eta) \in \mathbb{H} \times \mathbb{H} \ | \ v \in \mathbb{H}^{s}_{A}, \eta \in \mathbb{H}^{u}_{A^{*}} \},\\
		\breve{\mathbb{H}}^{u} := \{ (v,\eta) \in \mathbb{H} \times \mathbb{H} \ | \ v \in \mathbb{H}^{u}_{A}, \eta \in \mathbb{H}^{s}_{A^{*}} \}.
	\end{split}
\end{equation}

\begin{proposition}
	\label{PROP: StatDichSubspacesAreLagrange}
	Both subspaces $\breve{\mathbb{H}}^{s}$ and $\breve{\mathbb{H}}^{u}$ from \eqref{EQ: StatDualCouplingStableUnstableSubspaces} are Lagrange.
\end{proposition}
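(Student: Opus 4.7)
The plan is to recognize that this proposition is essentially just a repackaging of Proposition \ref{PROP: LagrangeThroughAProjector} applied to the spectral projectors $\Pi^{s}_{A}$ and $\Pi^{u}_{A}$, once we identify the ``dual'' subspaces $\mathbb{H}^{s/u}_{A^{*}}$ with orthogonal complements of $\mathbb{H}^{s/u}_{A}$.

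First I would record the key identification. The paragraph preceding the proposition notes that the spectral projectors of $G^{*}$ are exactly the adjoints of the spectral projectors of $G$, that is $\Pi^{s}_{A^{*}} = (\Pi^{s}_{A})^{*}$ and $\Pi^{u}_{A^{*}} = (\Pi^{u}_{A})^{*}$. From the general Hilbert-space identity $\operatorname{Ker} T^{*} = (\operatorname{Ran} T)^{\bot}$ applied to $T = \Pi^{s}_{A}$, and the fact that $\Pi^{u}_{A^{*}} = \operatorname{Id} - \Pi^{s}_{A^{*}}$ is complementary to $\Pi^{s}_{A^{*}}$, it follows that
\begin{equation}
\mathbb{H}^{u}_{A^{*}} = \operatorname{Ker}\Pi^{s}_{A^{*}} = \operatorname{Ker}(\Pi^{s}_{A})^{*} = (\operatorname{Ran}\Pi^{s}_{A})^{\bot} = (\mathbb{H}^{s}_{A})^{\bot}.
\end{equation}
Symmetrically, $\mathbb{H}^{s}_{A^{*}} = (\mathbb{H}^{u}_{A})^{\bot}$.

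Next I would invoke Proposition \ref{PROP: LagrangeThroughAProjector} twice. Applying it with $\Pi := \Pi^{s}_{A}$, so that $\operatorname{Ran}\Pi = \mathbb{H}^{s}_{A}$ and $\operatorname{Ker}\Pi^{*} = (\mathbb{H}^{s}_{A})^{\bot} = \mathbb{H}^{u}_{A^{*}}$ by the identification above, yields that
\begin{equation}
\breve{\mathbb{H}}^{s} = \{(v,\eta) \in \mathbb{H}\times\mathbb{H} \ | \ v \in \operatorname{Ran}\Pi^{s}_{A}, \ \eta \in \operatorname{Ker}(\Pi^{s}_{A})^{*}\}
\end{equation}
is Lagrange. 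Applying the same proposition with $\Pi := \Pi^{u}_{A}$ shows that $\breve{\mathbb{H}}^{u}$ is Lagrange.

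There is no real obstacle — the whole content of the statement is already encoded in Proposition \ref{PROP: LagrangeThroughAProjector} together with the duality between the spectral projections of $A$ and of $A^{*}$. The only thing to be a bit careful about is making the duality explicit so that the description of $\breve{\mathbb{H}}^{s/u}$ in terms of $\mathbb{H}^{u/s}_{A^{*}}$ matches the description in terms of orthogonal complements of $\mathbb{H}^{s/u}_{A}$ that the earlier proposition uses.
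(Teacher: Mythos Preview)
Your proposal is correct and matches the paper's own proof essentially verbatim: the paper applies Proposition~\ref{PROP: LagrangeThroughAProjector} with $\Pi = \Pi^{s}_{A}$ to handle $\breve{\mathbb{H}}^{s}$ and says the argument for $\breve{\mathbb{H}}^{u}$ is analogous. The only difference is that you spell out the identification $\mathbb{H}^{u}_{A^{*}} = (\mathbb{H}^{s}_{A})^{\bot}$ explicitly, whereas the paper relies on this having been recorded earlier in Section~\ref{SUBSEC: LyapPerronStatDich}.
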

\begin{proof}
	Indeed, for $\breve{\mathbb{H}}^{s}$ we have
	\begin{equation}
		\breve{\mathbb{H}}^{s} = \operatorname{Ran}\Pi \times \{0_{\mathbb{H}}\} \oplus \{0_{\mathbb{H}}\} \times (\operatorname{Ran}\Pi)^{\bot},
	\end{equation}
	where $\Pi = \Pi^{s}_{A}$. Then Proposition \ref{PROP: LagrangeThroughAProjector} gives the desired. For $\breve{\mathbb{H}}^{u}$, the argument is analogous. The proof is finished.
\end{proof}

Let $\mathbb{L}^{+}_{b}$ be the subspace of all $z_{0} \in \breve{\mathbb{H}}$ such that there exists a solution $z(\cdot)=(v(\cdot), \eta(\cdot))$ of \eqref{EQ: StatHamiltonianSystem} on $[0,+\infty)$ with $z(0)=z_{0}$ which is bounded. Moreover, for any $\varepsilon \in \mathbb{R}$ we define a subspace $\mathbb{L}^{+}_{\varepsilon}$ by requiring that $e^{\varepsilon t} z(t)$ should lie in $L_{2}(0,\infty;\breve{\mathbb{H}})$ instead of the boundedness. We have the following theorem.
\begin{theorem}
	\label{TH: DichotomyStatOptimizationLP}
	Consider the Hamiltonian system \eqref{EQ: StatHamiltonianSystem} under \eqref{EQ: FreqConditionStationaryGeneral} and \nameref{DESC: DICHSTATIONARY}. Then $\mathbb{L}^{+}_{b}$ is a Lagrange subspace which coincides with the subspace $\mathbb{L}^{+}_{\varepsilon}$ for all $|\varepsilon| \leq \varepsilon_{0}$ for some $\varepsilon_{0}>0$. Moreover, for any $0<\varepsilon \leq \varepsilon_{0}$ there exists a constant $M_{\varepsilon}>0$ such that the estimate
	\begin{equation}
		\label{EQ: ExpDecayStatHamiltonian}
		|z(t;z_{0})|_{\breve{\mathbb{H}}} \leq M_{\varepsilon} e^{-\varepsilon t} |z_{0}|_{\breve{\mathbb{H}}}
	\end{equation}
	holds for any $z_{0} \in \mathbb{L}^{+}_{b}$ and the corresponding solution $z(t;z_{0})$ with $z(0;z_{0}) = z_{0}$.
	
	In addition, $\mathbb{L}^{+}_{b}$ is given by the graph of a linear bounded operator $M^{+} \colon \breve{\mathbb{H}}^{s} \to \breve{\mathbb{H}}^{u}$ as
	\begin{equation}
		\label{EQ: StatDichSubspaceAsGraph}
		\mathbb{L}^{+}_{b} = \{ z^{s} + M^{+}z^{s} \ | \ z^{s} \in \breve{\mathbb{H}}^{s} \}.
	\end{equation}
\end{theorem}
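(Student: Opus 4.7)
The plan is to obtain the graph representation \eqref{EQ: StatDichSubspaceAsGraph} via a half-line Lyapunov-Perron analysis, then invoke Theorem \ref{TH: LagrangeSubspaceCriterion} to conclude that $\mathbb{L}^{+}_{b}$ is Lagrange, and finally extract the exponential decay and the identification $\mathbb{L}^{+}_{b} = \mathbb{L}^{+}_{\varepsilon}$ through a spectral shift. I would begin by rewriting \eqref{EQ: StatHamiltonianSystem} in the perturbed form \eqref{EQ: StatHamiltonianSystemRewriten} and observing that $\breve{A}$ together with the splitting $\breve{\mathbb{H}} = \breve{\mathbb{H}}^{s} \oplus \breve{\mathbb{H}}^{u}$ fits the framework of Section \ref{SUBSEC: LyapPerronStatDich}. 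For fixed $z^{s} \in \breve{\mathbb{H}}^{s}$, the natural ansatz is the fixed-point equation
\begin{equation*}
z(t) = G_{\sharp}^{\breve{A}}(t) z^{s} + \int_{0}^{t} G_{\sharp}^{\breve{A}}(t-s) \Pi_{\sharp} R z(s) ds - \int_{t}^{+\infty} G_{\flat}^{\breve{A}}(s-t) \Pi_{\flat} R z(s) ds,
\end{equation*}
whose solvability in $L_{2}([0,+\infty); \breve{\mathbb{H}}) \cap C_{b}([0,+\infty); \breve{\mathbb{H}})$ would deliver the required bounded linear map $M^{+}z^{s} := \Pi_{\flat} z(0)$.

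The analytic crux is the invertibility of $I - \mathfrak{L}_{\breve{A}} R$ on $L_{2}(\mathbb{R}; \breve{\mathbb{H}})$; the half-line problem then reduces to the full-line problem by standard gluing together with Lemma \ref{LEM: UniqueZeroSolutionStatDichotomies}. For injectivity, if $z \in L_{2}(\mathbb{R}; \breve{\mathbb{H}})$ satisfies $z = \mathfrak{L}_{\breve{A}}(R z)$, then by Lemma \ref{LEM: FourierSolutionOnRLemma} the Fourier transform $\widehat{z}(\omega) = (\widehat{v}(\omega), \widehat{\eta}(\omega))$ satisfies $i\omega \widehat{z}(\omega) = H \widehat{z}(\omega)$ for a.e.\ $\omega$; setting $\widehat{\xi}(\omega) := F_{3}^{-1}(B^{*}\widehat{\eta}(\omega) - F_{2}\widehat{v}(\omega))$ yields $\widehat{v}(\omega) = -(A - i\omega I)^{-1} B\widehat{\xi}(\omega)$. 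The Hamiltonian identity $\frac{d}{dt}\operatorname{Re}\langle v(t), \eta(t)\rangle_{\mathbb{H}} = \mathcal{F}(v(t), \xi(t))$, verified on suitable classical approximations and combined with Plancherel and the vanishing of $z$ along a subsequence $|t_{k}| \to \infty$, produces
\begin{equation*}
\int_{\mathbb{R}} \mathcal{F}^{\mathbb{C}}(\widehat{v}(\omega), \widehat{\xi}(\omega)) d\omega = 0,
\end{equation*}
which by \eqref{EQ: FreqConditionStationaryGeneral} forces $\widehat{\xi} \equiv 0$, and then $\widehat{z} \equiv 0$ follows from the exponential dichotomies of $A$ and $-A^{*}$. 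For surjectivity, equivalently the uniform resolvent bound $\|(i\omega I - H)^{-1}\| \leq C$ on $\omega \in \mathbb{R}$, I would parameterise $(i\omega I - H)(v, \eta) = (w_{1}, w_{2})$ by the control $\xi$, reducing the problem to an equation $\mathcal{G}(\omega)\xi = \mathcal{S}(\omega)(w_{1},w_{2})$ on $\mathbb{U}^{\mathbb{C}}$, and check that \eqref{EQ: FreqConditionStationaryGeneral} yields $\operatorname{Re}\langle \mathcal{G}(\omega)\xi, \xi\rangle \geq \delta|\xi|^{2}$ uniformly in $\omega$.

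With the graph representation and boundedness of $M^{+}$ in hand, I would apply Theorem \ref{TH: LagrangeSubspaceCriterion} with $\mathbb{H}_{\sharp} = \breve{\mathbb{H}}^{s}$ and $\mathbb{H}_{\flat} = \breve{\mathbb{H}}^{u}$, both Lagrange by Proposition \ref{PROP: StatDichSubspacesAreLagrange}. By construction $\Pi_{\sharp}$ is bijective from $\mathbb{L}^{+}_{b}$ onto $\breve{\mathbb{H}}^{s}$, so it only remains to verify isotropy. For initial data in $\mathcal{D}(\breve{A}) \cap \mathbb{L}^{+}_{b}$ the solutions are classical and \eqref{EQ: SymplecticIdentityStationary} yields $\frac{d}{dt}\langle z_{1}(t), J z_{2}(t)\rangle_{\breve{\mathbb{H}}} = 0$; combined with the exponential decay $z_{i}(t) \to 0$, this forces $\langle z_{1}(0), J z_{2}(0)\rangle_{\breve{\mathbb{H}}} = 0$, and the identity extends to all of $\mathbb{L}^{+}_{b}$ by density of $\mathcal{D}(\breve{A}) \cap \breve{\mathbb{H}}^{s}$ in $\breve{\mathbb{H}}^{s}$ and the continuity of $M^{+}$. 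For the exponential decay and the equality $\mathbb{L}^{+}_{b} = \mathbb{L}^{+}_{\varepsilon}$, I would apply a spectral shift: for sufficiently small $\varepsilon_{0} > 0$ and $|\varepsilon| \leq \varepsilon_{0}$ the operator $A + \varepsilon I$ retains an exponential dichotomy of rank $j$ and \eqref{EQ: FreqConditionStationaryGeneral} persists with a slightly smaller $\delta$; running the construction for the shifted Hamiltonian applied to the substitution $z(t) \mapsto e^{\varepsilon t} z(t)$ shows that its stable subspace coincides with $\mathbb{L}^{+}_{b}$, so any $z \in \mathbb{L}^{+}_{b}$ satisfies $e^{\varepsilon t} z(\cdot) \in L_{2}([0,+\infty); \breve{\mathbb{H}})$, and \eqref{EQ: ExpDecayStatHamiltonian} follows from the Cauchy formula combined with standard mild-solution estimates. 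The main obstacle is the uniform resolvent bound coming from the frequency condition (the heart of the Yakubovich-Kalman machinery); a secondary technical point is promoting the symplectic preservation from classical to mild solutions, which should be resolvable by a smoothing argument exploiting the boundedness of $R$.
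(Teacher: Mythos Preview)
Your approach is essentially the paper's: Lyapunov--Perron reduction for the graph, Fourier analysis driven by the frequency condition for solvability, Theorem~\ref{TH: LagrangeSubspaceCriterion} combined with symplectic conservation for the Lagrange property, and a spectral shift $A \rightsquigarrow A + \varepsilon I$ for the $\varepsilon$-statements. Two points deserve comment.

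First, the paper does not analyse $I - \mathfrak{L}_{\breve{A}}R$ on $L_{2}(\mathbb{R};\breve{\mathbb{H}})$ directly; it reduces the fixed-point equation to the single control variable $\xi \in L_{2}(0,\infty;\mathbb{U})$, obtains a Fourier multiplier $M(\omega) \in \mathcal{L}(\mathbb{U}^{\mathbb{C}})$, and shows that $F_{3}(I - M(\omega))$ is self-adjoint and coercive with constant $\delta$ via \eqref{EQ: FreqConditionStationaryGeneral}, so that Lax--Milgram yields injectivity and surjectivity at once with the uniform bound \eqref{EQ: NormEstimateStationaryLP}. Your separate injectivity argument via the identity $\frac{d}{dt}\langle v,\eta\rangle_{\mathbb{H}} = \mathcal{F}(v,\xi)$ is correct but becomes redundant once you carry out the coercivity argument for surjectivity. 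The paper in fact presents your two-component setup as the ``naive'' route (see \eqref{EQ: StatProblemsLyapPerronNaive}) before switching to the control formulation for interpretive reasons.

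Second, your density step for isotropy has a small gap: from $z^{s} \in \mathcal{D}(\breve{A}) \cap \breve{\mathbb{H}}^{s}$ it does not follow that $M^{+}z^{s} \in \mathcal{D}(\breve{A})$, since $\breve{\mathbb{H}}^{u} = \mathbb{H}^{u}_{A} \times \mathbb{H}^{s}_{A^{*}}$ and the second factor is infinite-dimensional, so $M^{+}$ need not preserve regularity. The paper bypasses this by noting that the flow restricted to $\mathbb{L}^{+}_{b}$ is a $C_{0}$-semigroup (by the continuous dependence just established), whence the domain of its generator supplies a dense set of $z_{0} \in \mathbb{L}^{+}_{b}$ with classical trajectories lying in $\mathcal{D}(A) \times \mathcal{D}(A^{*})$.
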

\begin{proof}
	We are aimed to prove that for any $z^{s} \in \breve{\mathbb{H}}^{s}$ there exists a unique $z_{0}=(v_{0},\eta_{0}) \in \mathbb{L}^{+} := \mathbb{L}_{0}$ such that $\Pi^{s}_{\breve{A}}z_{0} = z^{s}$. To involve Lyapunov-Perron operators into the problem, suppose for a moment that we have already found such $z_{0}$. Then there exists a solution $z(\cdot)=(v(\cdot),\eta(\cdot))$ of \eqref{EQ: StatHamiltonianSystem} on $[0;+\infty)$ with $z(0)=z_{0}$. Define $\Delta z(t) := z(t) - G_{\sharp}(t)z^{s}$. Then $\Pi^{s}_{\breve{A}}\Delta z(0) = 0$. This allows to define a continuous function $w(\cdot)$ on $\mathbb{R}$ as
	\begin{equation}
		\label{EQ: LyapPerronStatDichWDef}
		w(t) = \begin{cases}
			\Delta z(t), \text{ for } t \geq 0,\\
			G_{\flat}(-t)\Pi^{u}_{\breve{A}}\Delta z(0), \text{ for } t < 0.
		\end{cases}
	\end{equation}
	
	Now let us for methodological purposes examine a naive but illustrative way to introduce Lyapunov-Perron operators into the problem. Clearly, $w(\cdot)$ solves 
	\begin{equation}
		\label{EQ: LyapPerronStatDichEquationW}
		\dot{w}(t) = \breve{A}w(t) + R(\mathcal{P}\Delta z)(t) + R g(t; z^{s}),
	\end{equation}
	where $g(t) = g(t; z^{s}) := G_{\sharp}(t)z^{s}$ for $t \geq 0$ and $0$ for $t < 0$ and for any $f(\cdot) \in L_{2}(0,+\infty;\breve{\mathbb{H}})$ and $R$ from \eqref{EQ: StatHamiltonianPerturbationMatrixR} we have
	\begin{equation}
		(\mathcal{P}f)(t) = \begin{cases}
			f(t), \text{ for } t > 0,\\
			0, \text{ for } t \leq 0.
		\end{cases}
	\end{equation}
	By \eqref{EQ: LyapPerronStatDichWDef}, $w(\cdot)$ is a solution to \eqref{EQ: LyapPerronStatDichEquationW} belonging to $L_{2}(\mathbb{R};\breve{\mathbb{H}})$. Since $\mathcal{P}\Delta z + g \in L_{2}(\mathbb{R};\breve{\mathbb{H}})$, Theorem \ref{TH: LyapunovPerronOperatorStationaryTheorem} gives that
	\begin{equation}
		w = \mathfrak{L}_{\breve{A}} R(\mathcal{P} \Delta z + g)
	\end{equation}
	Let $\mathcal{R} \colon L_{2}(\mathbb{R};\breve{\mathbb{H}}) \to L_{2}(0,\infty;\breve{\mathbb{H}})$ be the operator that restricts functions to $(0,\infty)$. Applying $\mathcal{R}$ to the above identity gives
	\begin{equation}
		\label{EQ: StatProblemsLyapPerronNaive}
		\Delta z = \mathfrak{L}_{\breve{A}} R(\mathcal{P} \Delta z + g).
	\end{equation}
	Thus, $\Delta z$ is the fixed point (which is expected to be unique) of the operator from the right-hand side in \eqref{EQ: StatProblemsLyapPerronNaive} considered in $L_{2}(0,\infty;\breve{\mathbb{H}})$.
	
	However, the operator from \eqref{EQ: StatProblemsLyapPerronNaive} is not appropriate from the perspective of control-theoretic nature of the problem. Indeed, from the view of $\mathbb{H}$-valued functions, the $\breve{\mathbb{H}}$-valued function $\Delta z$ has two components and, consequently, it represents two inputs for the operator, but the initial control problem is a problem with a single input. 
	
	Here the right approach is as follows. Let $\mathcal{R}$ take $L_{2}(\mathbb{R};\mathbb{H})$ into $L_{2}(0,\infty;\mathbb{H})$ by restricting functions to $(0,\infty)$ and let $\mathcal{P}$ take $L_{2}(0,\infty;\mathbb{H})$ to $L_{2}(\mathbb{R};\mathbb{H})$ by extending functions by $0$ to the negative semi-axis. Suppose $\Delta z(\cdot) = (\Delta v(\cdot), \Delta \eta(\cdot))$. Let us put $\xi(t) := -F^{-1}_{3}F_{2} \Delta v(t) + F^{-1}_{3}B^{*}\Delta \eta(t)$ for $t \geq 0$ and suppose $Rg(t;z^{s}) = (g_{v}(t), g_{\eta}(t))$ for $t \in \mathbb{R}$. In terms of the operators $\mathfrak{L}_{A}$ and $\mathfrak{L}_{-A^{*}}$ we can characterize $\xi$ as
	\begin{equation}
		\label{EQ: LyapPerronOperatorStationaryTrue}
		\xi =  \mathcal{T}\xi + \mathcal{T}_{0}g,
	\end{equation}
	where
	\begin{equation}
		\begin{split}
			\mathcal{T} &:= -F^{-1}_{3}F_{2}\mathcal{R} \mathfrak{L}_{A} \mathcal{P} B + F^{-1}_{3} B^{*} \mathcal{R} \mathfrak{L}_{-A^{*}} \mathcal{P}\left[ \mathcal{R} F_{1} \mathfrak{L}_{A}\mathcal{P} B + F^{*}_{2} \right],\\
			\mathcal{T}_{0} &:= -F^{-1}_{3}F_{2}\mathcal{R} \mathfrak{L}_{A} g_{v} + F^{-1}_{3} B^{*} \mathcal{R} \mathfrak{L}_{-A^{*}} \mathcal{P} \mathcal{R} F_{1} \mathfrak{L}_{A} g_{\eta}
		\end{split}
	\end{equation}
	
	Let us show that $I - \mathcal{T}$ is invertible in $L_{2}(0,\infty;\mathbb{U}^{\mathbb{C}})$, so \eqref{EQ: LyapPerronOperatorStationaryTrue} has a unique solution. Clearly, for any $\xi(\cdot) \in L_{2}(0,\infty;\mathbb{H})$ we have $\mathcal{T} \xi(\cdot) = -F^{-1}_{3}F_{2} v(\cdot) + F^{-1}_{3}B^{*}\eta(\cdot)$, where $v(\cdot), \eta(\cdot) \in L_{2}(\mathbb{R};\mathbb{H})$ satisfy
	\begin{equation}
		\begin{split}
			\dot{v}(t) &= A v(t) + B\mathcal{P}\xi(t),\\
			\dot{\eta}(t) &= - A^{*}\eta(t) + F_{1} v(t) + F^{*}_{2} \mathcal{P}\xi(t).
		\end{split}
	\end{equation}
	
	We will establish invertibility in the Fourier space. Namely, let $\widehat{v}(\cdot)$, $\widehat{\eta}(\cdot)$ and $\widehat{\xi}(\cdot)$ denote the Fourier transforms of $v(\cdot)$, $\eta(\cdot)$ and $(\mathcal{P} \xi)(\cdot)$ respectively. Then Lemma \ref{LEM: FourierSolutionOnRLemma} gives the relations
	\begin{equation}
		\begin{split}
			\widehat{v}(\omega) &= - (A-i\omega I)^{-1}B \widehat{\xi}(\omega),\\
			\widehat{\eta}(\omega) &= -(-A^{*} -i\omega I)^{-1}( F_{1} \widehat{v}(\omega) + F^{*}_{2}\widehat{\xi}(\omega) )
		\end{split}
	\end{equation}
	for almost all $\omega \in \mathbb{R}$. From this, we get 
	\begin{equation}
		-F^{-1}_{3}F_{2} \widehat{v}(\omega) + F^{-1}_{3}B^{*}\widehat{\eta}(\omega) = M(\omega) \widehat{\xi}(\omega),
	\end{equation}
	where the operator $M(\omega) \in \mathcal{L}(\mathbb{U}^{\mathbb{C}})$ for any $\omega \in \mathbb{R}$ is given by
	\begin{equation}
		\begin{split}
			M(\omega) := F^{-1}_{3} F_{2} (A-i\omega I)^{-1}B +\\+ F^{-1}_{3}B^{*}(-A^{*} - i\omega I)^{-1}\left(F_{1} (A-i\omega I)^{-1}B - F^{*}_{2}\right).
		\end{split}
	\end{equation}
	Note that the operator $F_{3} M(\omega)$ is self-adjoint and for any $v \in \mathcal{D}(A)$, $\xi \in \mathbb{U}^{\mathbb{C}}$ and $\omega \in \mathbb{R}$ related by $i \omega v = A v + B\xi$ from \eqref{EQ: FreqConditionStationaryGeneral} we have
	\begin{equation}
		\label{EQ: ProofStatDichUtilizeFreqIneq}
		\mathcal{F}^{\mathbb{C}}(v, \xi) = ( F_{3}(I - M(\omega))\xi,\xi)_{\mathbb{U}^{\mathbb{C}}} \geq \delta |\xi|^{2}_{\mathbb{U}^{\mathbb{C}}}.
	\end{equation}
	This means that the quadratic form of the self-adjoint operator $F_{3}(I - M(\omega))$ is coercive. Now the Lax-Milgram theorem gives that the operator is invertible and the norm of its inverse can be estimated as $\delta^{-1}$. In particular, $I - M(\omega)$ is invertible and
	\begin{equation}
		\label{EQ: NormEstimateStationaryLP}
		\|(I - M(\omega))^{-1}\| \leq \frac{\|F_{3}\|}{\delta} \text{ for any } \omega \in \mathbb{R}.
	\end{equation}
	
	Since the Fourier transform is a unitary operator, the operator $I-\mathcal{T}$ in $L_{2}(0,\infty;\mathbb{U})$ is invertible and its inverse admits the same estimate as in \eqref{EQ: NormEstimateStationaryLP}. 
	
	Consequently, for any $z^{s} \in \breve{\mathbb{H}}^{s}$ and $g(t) = g(t;z^{s}) = G_{\sharp}(t)z^{s}$ for $t \geq 0$ we may put 
	\begin{equation}
		\begin{split}
			\xi &:=(I-\mathcal{T})^{-1}\mathcal{T}_{0}g,\\
			\Delta v &:= \mathcal{L}_{A}B\mathcal{P}\xi,\\
			\Delta\eta &:= \mathcal{L}_{-A^{*}} (F_{1}\Delta v + F^{*}_{2}\mathcal{P}\xi)
		\end{split}
	\end{equation}
	and $\Delta z = (\Delta v, \Delta \eta)$ is the required difference and $\Pi^{s}_{\breve{A}}\Delta z(0) = 0$. We put $M^{+}z^{s} :=\Delta z (0)$. Note that $\xi$ continuously depend on $z^{s}$ in $L_{2}(0,\infty;\mathbb{U}^{\mathbb{C}})$. Then Theorem \ref{TH: LyapunovPerronOperatorStationaryTheorem} gives that $\Delta z(0)$ continuously depend on $z^{s}$. Consequently, $M^{+}z^{s}$ is bounded.
	
	Thus, we constructed the subspace $\mathbb{L}_{\varepsilon}$ for $\varepsilon=0$. Note that we may apply the same arguments for $\breve{A}$ exchanged with the operator $\breve{A} + \varepsilon I$ with $|\varepsilon| < \varepsilon_{0}$. Indeed, the exponential dichotomy and the frequency condition \eqref{EQ: FreqConditionStationaryGeneral} are also satisfied if $\varepsilon_{0}>0$ is sufficiently small. For the latter, this is due to the First Resolvent Identity, the boundedness of $\mathcal{F}$ and uniform boundedness of the resolvent of $\breve{A}$ on the imaginary axis (see the proof of Theorem 4.2 in \cite{Anikushin2020FreqDelay} for similar arguments). It is clear that $\mathbb{L}^{+}_{\varepsilon}$ is just the space constructed by above arguments for the operator $\breve{A} + \varepsilon I$.
	
	By definition, $\mathbb{L}^{+}_{\varepsilon_{1}} \supset \mathbb{L}^{+}_{\varepsilon_{2}}$ for $\varepsilon_{1} < \varepsilon_{2}$ and $\mathbb{L}^{+}_{\varepsilon} \subset \mathbb{L}^{+}_{b} \subset \mathbb{L}^{+}_{-\varepsilon}$ for $\varepsilon > 0$. Since $\Pi^{s}_{\breve{A}}$ is bijective on each $\mathbb{L}_{\varepsilon}$ by construction, all the spaces must coincide. 
	
	Using the boundedness of the Lyapunov-Perron operator from $L_{2}(\mathbb{R};\breve{\mathbb{H}})$ into $C_{b}(\mathbb{R};\breve{\mathbb{H}})$ given by Theorem \ref{TH: LyapunovPerronOperatorStationaryTheorem} and applied to the problem $\breve{A} + \varepsilon I$ with $|\varepsilon| \leq \varepsilon_{0}$, we get that \eqref{EQ: ExpDecayStatHamiltonian} is satisfied with
	\begin{equation}
		M_{\varepsilon}:=\sup_{t \geq 0}\sup_{|z_{0}|_{\breve{\mathbb{H}}} \leq 1}| e^{\varepsilon t}z(t;q,z_{0}) |_{\breve{\mathbb{H}}} < \infty.
	\end{equation}
	
	It remains to show that $\mathbb{L}^{+}_{b}$ is Lagrange. For this, note that the restriction of the Hamiltonian $H$ to $\mathbb{L}^{+}_{b}$ generates a $C_{0}$-semigroup due to the continuous dependence of $\Delta z(0)$ on $z^{s}$. In particular, for a dense subset of $z_{0} \in \mathbb{L}^{+}_{b}$ there exist classical solutions $z(\cdot)=(v(\cdot),\eta(\cdot))$ with $z(0)=z_{0}$. Clearly, $v(\cdot)$ and $\eta(\cdot)$ represent classical solutions to the corresponding problems associated with $A$ and $-A^{*}$. In particular, $z(t) \in \mathcal{D}(A) \times \mathcal{D}(A^{*})$ for any $t \geq 0$. This gives the validity of \eqref{EQ: SymplecticFormPreservationStationary} for mild (i.e. all) solutions starting from $\mathbb{L}^{+}_{b}$ by continuity. Now take any $z^{1}_{0},z^{2}_{0} \in \mathbb{L}^{+}_{b}$ and the corresponding solutions $z_{1}(\cdot), z_{2}(\cdot) \in L_{2}(0,\infty;\breve{\mathbb{H}})$ with $z_{1}(0)=z^{1}_{0}$ and $z_{2}(0) = z^{2}_{0}$. Then for any $t \geq 0$ we have
	\begin{equation}
		\label{EQ: StatDichPertbIsotropicSympIden}
		\langle z_{1}(t),Jz_{2}(t) \rangle_{\breve{\mathbb{H}}} = \langle z^{1}_{0},Jz^{2}_{0} \rangle_{\breve{\mathbb{H}}}.
	\end{equation}
	Passing to the limit as $t \to +\infty$ and using \eqref{EQ: ExpDecayStatHamiltonian}, we get $\langle z^{1}_{0},Jz^{2}_{0} \rangle_{\breve{\mathbb{H}}} = 0$. Thus, $\mathbb{L}^{+}_{b}$ is isotropic.
	
	By \eqref{EQ: StatDichSubspaceAsGraph}, $\mathbb{L}^{+}_{b}$ is a graph over the direct sum decomposition $\breve{\mathbb{H}} = \breve{\mathbb{H}}^{s} \oplus \breve{\mathbb{H}}^{u}$ into Lagrange subspaces (see Proposition \ref{PROP: StatDichSubspacesAreLagrange}). Now Theorem \ref{TH: LagrangeSubspaceCriterion} gives that $\mathbb{L}^{+}_{b}$ is Lagrange. The proof is finished.
\end{proof}

The presented proof reveals significant advantages of applying Lyapunov-Perron operators to linear Hamiltonian systems. Namely, in the nonlinear case, resolution (with unique solutions and uniform bounds) of fixed-point equations like \eqref{EQ: LyapPerronOperatorStationaryTrue} is possible only by applying the Contraction Mapping Theorem. In our case, such equations can be studied by more delicate methods of linear theory. For example, it is clear (see \eqref{EQ: ProofStatDichUtilizeFreqIneq}) that under \eqref{EQ: FreqConditionStationaryGeneral} the operator $\mathcal{T}$ is not necessarily contracting. Let us, however, mention an interesting case, where $\mathcal{T}$ can be proven to be a contraction.

Suppose $C \colon \mathbb{H} \to \mathbb{M}$, where $\mathbb{M}$ is a real Hilbert space, $\Lambda > 0$ and put $F_{1} := -\Lambda^{2}C^{*} C$, $F_{2} := 0$ and $F_{3} := \operatorname{Id}_{\mathbb{U}}$. Then the frequency inequality \eqref{EQ: FreqConditionStationaryGeneral} in terms of the transfer operator $W(p):=C(A-pI)^{-1}B$ can be described as
\begin{equation}
	\label{EQ: StatCaseSmithFreqCond}
	\sup_{\omega \in \mathbb{R}}\| W(i\omega) \|_{ \mathbb{U}^{\mathbb{C}} \to \mathbb{M}^{\mathbb{C}} } < \Lambda^{-1}.
\end{equation}
It can be verified that under this inequality, the operator $\mathcal{T}$ is a contraction. 

So, from the above, it is not surprising that direct applications of Lyapunov-Perron operators to nonlinear problems can only lead to conditions like \eqref{EQ: StatCaseSmithFreqCond}. For example, such conditions were explored by R.A.~Smith \cite{Smith1992} and M.~Miklav\v{c}i\v{c} \cite{Miclavcic1991}. In the case $A$ is self-adjoint, from \eqref{EQ: StatCaseSmithFreqCond} one can deduce the well-known Spectral Gap Condition in its optimal form (see \cite{Anikushin2020FreqParab}).

We refer to our work \cite{Anikushin2020FreqParab} for discussions concerned with optimality of the frequency inequality \eqref{EQ: FreqConditionStationaryGeneral}. It is however pretty clear from the proof of Theorem \ref{TH: DichotomyStatOptimizationLP} and especially \eqref{EQ: ProofStatDichUtilizeFreqIneq} that violating the inequality would likely prevent $\mathbb{L}^{+}_{b}$ to exist (at least) as a (in a sense uniform) perturbation of $\breve{\mathbb{H}}^{s}$. Thus, in terms of the perturbation problem it gives the maximum that can be achieved.

It should be also noted that there are many works containing ineffective and/or nonoptimal conditions even in the self-adjoint case (see, for example, Chapters VIII and IX in the monograph of R.~Temam \cite{Temam1997} or discussions in \cite{Anikushin2020FreqDelay,Anikushin2020FreqParab}). In the derivation of optimal conditions, the key element is the Fourier transform. In the field of inertial manifolds this was firstly done in the mentioned works of M.~Miklav\v{c}i\v{c} \cite{Miclavcic1991} and, in an implicit manner, R.A.~Smith \cite{Smith1992}. However, it seems that V.M.~Popov (see \cite{Popov1961} and references therein) was the first to apply the Fourier transform in the study of Lyapunov-Perron operators\footnote{Being a mathematical engineer, his initial motivation was to provide conditions for the global stability of certain control systems in terms of the transfer operator. So, there was no way for him to avoid the transition into the frequency domain (Fourier space).}. In the community of mathematical engineers, his method is known as the ``method of a priori integral estimates''.

\subsection{Nonoscillation}
Now we proceed to further investigations of the stationary case. For applications, the existence of a Lagrange subspace $\mathbb{L}^{+}_{b}$ is only half the battle. It is also important to know whether there exists a self-adjoint operator $P \in \mathcal{L}(\mathbb{H})$ such that
\begin{equation}
	\label{EQ: StatCaseNonoscillation}
	\mathbb{L}^{+}_{b} = \mathbb{L}_{P} := \{ (v, -Pv) \ | \ v \in \mathbb{H}  \}.
\end{equation}
In this case one says that \eqref{EQ: StatHamiltonianSystem} is \textit{nonoscillating}. Note that \eqref{EQ: StatCaseNonoscillation} means $\mathbb{L}^{+}_{b}$ is transversal to $\mathbb{L}_{\eta}$, where $\mathbb{L}_{\eta} := \{0_{\mathbb{H}}\} \times \mathbb{H}$, or, in other words, that it belongs to the local chart of $\mathbb{L}_{v} = \mathbb{L}^{\bot}_{\eta}$ in the Lagrangian-Grassmanian (see Proposition 2.21 in \cite{Furutani2004FLG}). Thus, \eqref{EQ: StatCaseNonoscillation} is an open condition.

Note that although the answer for \eqref{EQ: StatCaseNonoscillation} to hold in the stationary case is well-known (see Proposition \ref{PROP: StatControllabilitySufficientNonOsc}) and it is given by the $L_{2}$-controllability of the pair $(A,B)$ (in addition to the frequency inequality \eqref{EQ: FreqConditionStationaryGeneral}), here we are aimed to illustrate the problem in terms that are also suitable for studying nonstationary problems (see Section \ref{SEC: SpatAvgNonstatOpt}).

An immediate corollary from the construction of $\mathbb{L}^{+}_{b}$ is the following.
\begin{corollary}
	\label{COR: StatDichLagrangeSubsFredholmPair}
	Under the conditions of Theorem \ref{TH: DichotomyStatOptimizationLP}, for the Lagrange subspaces $\mathbb{L}^{+}_{b}$ and $\mathbb{L}_{\eta} := \{0_{\mathbb{H}}\} \times \mathbb{H}$ we have
	\begin{enumerate}
		\item[1)] $\dim (\mathbb{L}^{+}_{b} \cap \mathbb{L}_{\eta}) \leq j$;
		\item[2)] $\mathbb{L}^{+}_{b} + \mathbb{L}_{\eta}$ is closed and has finite codimension $\leq j$,
	\end{enumerate}
	where $j \geq 0$ from \nameref{DESC: DICHSTATIONARY}. In particular, $(\mathbb{L}^{+}_{b}, \mathbb{L}_{\eta})$ is a Fredholm pair (see \cite{Furutani2004FLG}).
\end{corollary}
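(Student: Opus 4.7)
The plan is to exploit the graph description \eqref{EQ: StatDichSubspaceAsGraph} of $\mathbb{L}^{+}_{b}$ over the Lagrange subspace $\breve{\mathbb{H}}^{s}=\mathbb{H}^{s}_{A}\times\mathbb{H}^{u}_{A^{*}}$ and to pass the finite-dimensional defect coming from the rank $j$ of the dichotomy through the first-component projection $\pi_{1}\colon\breve{\mathbb{H}}\to\mathbb{H}$, $\pi_{1}(v,\eta):=v$, whose kernel is precisely $\mathbb{L}_{\eta}$. Both spaces $\mathbb{H}^{u}_{A}$ and $\mathbb{H}^{u}_{A^{*}}$ appearing in the target and source are $j$-dimensional, and this will bound everything in sight by $j$.

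For item~(1), an element $(0,\eta)\in\mathbb{L}^{+}_{b}\cap\mathbb{L}_{\eta}$ has the form $(0,\eta)=z^{s}+M^{+}z^{s}$ for some $z^{s}=(v^{s},\eta^{s}_{*})\in\mathbb{H}^{s}_{A}\times\mathbb{H}^{u}_{A^{*}}$, with $M^{+}z^{s}=(v^{u},\eta^{u}_{*})\in\mathbb{H}^{u}_{A}\times\mathbb{H}^{s}_{A^{*}}$. The direct sum $\mathbb{H}=\mathbb{H}^{s}_{A}\oplus\mathbb{H}^{u}_{A}$ applied to the first coordinate forces $v^{s}=0$ and $v^{u}=0$, so $z^{s}$ lies in the $j$-dimensional space $\{0_{\mathbb{H}}\}\times\mathbb{H}^{u}_{A^{*}}$. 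Since $z^{s}\mapsto z^{s}+M^{+}z^{s}$ is injective on $\breve{\mathbb{H}}^{s}$ (being the inverse of the bijection $\Pi^{s}_{\breve{A}}\colon\mathbb{L}^{+}_{b}\to\breve{\mathbb{H}}^{s}$ from Theorem~\ref{TH: DichotomyStatOptimizationLP}), the conclusion $\dim(\mathbb{L}^{+}_{b}\cap\mathbb{L}_{\eta})\leq j$ follows, matching the application of Lemma~\ref{LEM: IntersectionDimensionLemma}.

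For item~(2), since $\ker\pi_{1}=\mathbb{L}_{\eta}$ I have $\mathbb{L}^{+}_{b}+\mathbb{L}_{\eta}=\pi_{1}^{-1}(\pi_{1}\mathbb{L}^{+}_{b})$, which reduces the two required properties to showing that $\pi_{1}\mathbb{L}^{+}_{b}$ is a closed subspace of $\mathbb{H}$ of codimension $\leq j$. Substituting \eqref{EQ: StatDichSubspaceAsGraph} identifies $\pi_{1}\mathbb{L}^{+}_{b}$ with the range of the bounded operator
\begin{equation*}
	\hat{T}\colon\breve{\mathbb{H}}^{s}\to\mathbb{H},\qquad \hat{T}(v^{s},\eta^{s}_{*}):=v^{s}+\pi_{1}M^{+}(v^{s},\eta^{s}_{*}).
\end{equation*}

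The main obstacle is establishing the closedness of $\operatorname{Ran}\hat{T}$, which I intend to resolve by Fredholm perturbation. Write $\hat{T}=T_{1}+T_{2}$ with $T_{1}(v^{s},\eta^{s}_{*}):=v^{s}$ regarded as a map into $\mathbb{H}=\mathbb{H}^{s}_{A}\oplus\mathbb{H}^{u}_{A}$, and $T_{2}:=\pi_{1}\circ M^{+}$. The operator $T_{1}$ is Fredholm with $\ker T_{1}=\{0\}\times\mathbb{H}^{u}_{A^{*}}$ and $\operatorname{coker}T_{1}\cong\mathbb{H}^{u}_{A}$, both of dimension $j$, so $\operatorname{ind}T_{1}=0$; the perturbation $T_{2}$ takes values in the finite-dimensional space $\mathbb{H}^{u}_{A}$ and is therefore compact. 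Hence $\hat{T}$ is Fredholm of index $0$ with closed range, and $\operatorname{codim}\operatorname{Ran}\hat{T}=\dim\ker\hat{T}\leq j$, the last bound following from the same direct-sum argument as in~(1) applied to the equation $v^{s}+\pi_{1}M^{+}(v^{s},\eta^{s}_{*})=0$. Lifting through $\pi_{1}^{-1}$ yields both the closedness of $\mathbb{L}^{+}_{b}+\mathbb{L}_{\eta}$ and the codimension bound in $\breve{\mathbb{H}}$.
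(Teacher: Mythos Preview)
Your proof is correct and follows the same overall strategy as the paper: for item~(1) you parametrize $\mathbb{L}^{+}_{b}\cap\mathbb{L}_{\eta}$ by the graph map and observe that the first-coordinate constraint forces $z^{s}\in\{0\}\times\mathbb{H}^{u}_{A^{*}}$, and for item~(2) you reduce to closedness and codimension of $\pi_{1}\mathbb{L}^{+}_{b}$ in $\mathbb{H}$. The one noteworthy difference is in how you justify the closedness of $\pi_{1}\mathbb{L}^{+}_{b}=\operatorname{Ran}\hat{T}$. The paper argues directly: the image of $\hat{T}$ restricted to $\mathbb{H}^{s}_{A}\times\{0\}$ is the graph of a bounded map $\mathbb{H}^{s}_{A}\to\mathbb{H}^{u}_{A}$, hence closed of codimension $j$ (by Lemma~\ref{LEM: DirectSumDecompositionCorrective}), and the full range differs from this by a subspace of the $j$-dimensional $\mathbb{H}^{u}_{A}$. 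You instead invoke the stability of the Fredholm index under compact (here, finite-rank) perturbation, which is more systematic and yields the index-$0$ relation $\operatorname{codim}\operatorname{Ran}\hat{T}=\dim\ker\hat{T}$ automatically. Both routes are short; yours imports a standard theorem, the paper's stays self-contained with the elementary lemmas of Section~\ref{SUBSEC: ProjectorsAndLagrangeSubspaces}.
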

\begin{proof}
	For item 1), from \eqref{EQ: StatDichSubspaceAsGraph} we get that $\mathbb{L}^{+}_{b} \cap \breve{\mathbb{H}}^{u} = \{ 0 \}$. Now we apply Lemma \ref{LEM: IntersectionDimensionLemma} to $\mathbb{H}_{f} := \{0_{\mathbb{H}}\} \times \mathbb{H}^{s}_{A^{*}}$, $\mathbb{H}_{a} := \{0_{\mathbb{H}}\} \times \mathbb{H}^{u}_{A^{*}}$ and $\mathbb{L} := \mathbb{L}^{+}_{b} \cap (\{0_{\mathbb{H}}\} \times \mathbb{H})$. 
	
	For item 2) we have
	\begin{equation}
		\mathbb{L}^{+}_{b} + \mathbb{L}_{\eta} = \{ (v,\eta) \in \mathbb{H} \times \mathbb{H} \ | \ (v,0) \in \Pi_{v}\mathbb{L}^{+}_{b}, \eta \in \mathbb{H} \},
	\end{equation}
	where $\Pi_{v}$ is the orthogonal projector onto $\mathbb{L}_{v} := \mathbb{H} \times \{ 0_{\mathbb{H}} \}$. From \eqref{EQ: StatDualCouplingStableUnstableSubspaces} and \eqref{EQ: StatDichSubspaceAsGraph} we deduce that $\Pi_{v} \breve{\mathbb{H}}^{s} =  \mathbb{H}^{s}_{A} \times \{0_{\mathbb{H}}\}$ and $\Pi_{v}M^{+} \breve{\mathbb{H}}^{s} \subset \mathbb{H}^{u}_{A} \times \{0_{\mathbb{H}}\}$ with $\dim \mathbb{H}^{u}_{A} = j$. From this it can be deduced that $\Pi_{v} \mathbb{L}^{+}_{b}$ is closed and there exists a finite-dimensional subspace $\mathbb{L}_{c} \subset \mathbb{H}^{u}_{A} \times \{0_{\mathbb{H}}\}$ such that $\Pi_{v} \mathbb{L}^{+}_{b} \oplus \mathbb{L}_{c} = \mathbb{H} \times \mathbb{H}$. This shows item 2). The proof is finished.
\end{proof}

The following proposition provides a useful characterization of the nonoscillation condition \eqref{EQ: StatCaseNonoscillation}.
\begin{proposition}
	\label{PROP: NonoscillationStationaryAsTrivialIntersection}
	Under the conditions of Theorem \ref{TH: DichotomyStatOptimizationLP}, \eqref{EQ: StatCaseNonoscillation} holds if and only if $\mathbb{L}^{+}_{b}$ intersects $\mathbb{L}_{\eta} = \{0_{\mathbb{H}}\} \times \mathbb{H}$ trivially.
\end{proposition}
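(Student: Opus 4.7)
The plan is to derive this from two of our previously established tools: the Fredholm pair conclusion of Corollary \ref{COR: StatDichLagrangeSubsFredholmPair} and the graph criterion from Theorem \ref{TH: CriterionLagrangeSubspaceIsAGraph}. Note first that $\mathbb{L}_{v} := \mathbb{H} \times \{0_{\mathbb{H}}\}$ and $\mathbb{L}_{\eta} = \{0_{\mathbb{H}}\} \times \mathbb{H}$ are Lagrange subspaces with $\mathbb{L}_{\eta} = \mathbb{L}_{v}^{\bot}$, so the orthogonal decomposition $\breve{\mathbb{H}} = \mathbb{L}_{v} \oplus \mathbb{L}_{v}^{\bot}$ is of the type required by Theorem \ref{TH: CriterionLagrangeSubspaceIsAGraph}.

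The forward implication ($\Rightarrow$) is immediate: if $\mathbb{L}^{+}_{b} = \mathbb{L}_{P}$ with $P \in \mathcal{L}(\mathbb{H})$ self-adjoint, then any $(0,\eta) \in \mathbb{L}^{+}_{b}$ satisfies $\eta = -P \cdot 0 = 0$, so $\mathbb{L}^{+}_{b} \cap \mathbb{L}_{\eta} = \{0\}$.

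For the reverse implication ($\Leftarrow$), the strategy is to verify item 3 of Theorem \ref{TH: CriterionLagrangeSubspaceIsAGraph} with $\mathbb{L} := \mathbb{L}_{v}$ and $\mathbb{L}_{0} := \mathbb{L}^{+}_{b}$. By Corollary \ref{COR: StatDichLagrangeSubsFredholmPair}, the sum $\mathbb{L}^{+}_{b} + \mathbb{L}_{\eta}$ is closed and of finite codimension $\leq j$ in $\breve{\mathbb{H}}$. Under the trivial intersection hypothesis this sum becomes a direct sum $\mathbb{L}^{+}_{b} \oplus \mathbb{L}_{\eta}$, and choosing any (automatically closed) finite-dimensional algebraic complement $\mathbb{L}_{c}$ yields $\mathbb{L}^{+}_{b} \oplus \mathbb{L}_{c} \oplus \mathbb{L}_{\eta} = \breve{\mathbb{H}}$, which is item 3. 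Theorem \ref{TH: CriterionLagrangeSubspaceIsAGraph} then delivers item 1, namely that the orthogonal projector $\Pi_{\mathbb{L}_{v}} \colon \mathbb{L}^{+}_{b} \to \mathbb{L}_{v}$ is bijective. Identifying $\mathbb{L}_{v}$ with $\mathbb{H}$, the Bounded Inverse Theorem produces a bounded linear operator $P \in \mathcal{L}(\mathbb{H})$ such that $\mathbb{L}^{+}_{b} = \{(v,-Pv) \mid v \in \mathbb{H}\}$.

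It remains to verify that $P$ is self-adjoint, and this is forced by the Lagrange (equivalently, isotropy) property of $\mathbb{L}^{+}_{b}$: for any $v_{1},v_{2} \in \mathbb{H}$ one has
\begin{equation}
	0 = \langle (v_{1},-Pv_{1}), J(v_{2},-Pv_{2}) \rangle_{\breve{\mathbb{H}}} = \langle (v_{1},-Pv_{1}), (Pv_{2},v_{2}) \rangle_{\breve{\mathbb{H}}} = \langle v_{1}, Pv_{2} \rangle_{\mathbb{H}} - \langle Pv_{1}, v_{2} \rangle_{\mathbb{H}},
\end{equation}
so $P = P^{*}$, giving $\mathbb{L}^{+}_{b} = \mathbb{L}_{P}$ as required. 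There is no real analytic obstacle here — the proposition is essentially a bookkeeping consequence of the Fredholm pair structure together with the graph criterion, and the only mildly delicate point is recognising that the finite codimension bound from Corollary \ref{COR: StatDichLagrangeSubsFredholmPair} is exactly what makes the existence of the finite-dimensional complement $\mathbb{L}_{c}$ trivial.
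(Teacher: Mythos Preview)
Your proof is correct and follows essentially the same route as the paper: the forward direction is declared obvious, and for the converse both you and the paper verify item~3 of Theorem~\ref{TH: CriterionLagrangeSubspaceIsAGraph} with $\mathbb{L}=\mathbb{L}_{v}$ and $\mathbb{L}_{0}=\mathbb{L}^{+}_{b}$ by invoking the finite-codimension conclusion of Corollary~\ref{COR: StatDichLagrangeSubsFredholmPair} to produce the finite-dimensional complement $\mathbb{L}_{c}$, and then read off $P=P^{*}$ from the Lagrange property. Your write-up is slightly more explicit (spelling out the symplectic computation for $P=P^{*}$ and the Bounded Inverse Theorem), but there is no substantive difference in strategy.
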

\begin{proof}
	Note that the ``only if'' part is obvious. For the converse implication, we apply Theorem \ref{TH: CriterionLagrangeSubspaceIsAGraph} to $\mathbb{L}_{0} := \mathbb{L}^{+}_{b}$ and $\mathbb{L} := \mathbb{L}_{v} = \mathbb{H} \times \{0_{\mathbb{H}}\}$. Here item 3 of the theorem is satisfied since $\mathbb{L}_{b} \cap \mathbb{L}_{\eta} = \{ 0\}$ and the direct sum $\mathbb{L}_{b} \oplus \mathbb{L}_{\eta}$ can be complemented by a finite-dimensional (hence closed) subspace due to Corollary \ref{COR: StatDichLagrangeSubsFredholmPair}. Thus, the orthogonal projector $\Pi_{v}$ maps $\mathbb{L}^{+}_{b}$ bijectively onto $\mathbb{L}_{v}$ and, consequently, the inverse mapping is of the form $\mathbb{L}_{v} \ni (v,0) \mapsto (v,-Pv) \in \mathbb{L}^{+}_{b}$ for $v \in \mathbb{H}$. Since $\mathbb{L}^{+}_{b}$ is Lagrange, $P$ must be self-adjoint. The proof is finished.
\end{proof}

Recall that the pair $(A,B)$ is called $L_{2}$-controllable if for any $v_{0} \in \mathbb{H}$ there exists $\xi(\cdot) \in L_{2}(0,\infty;\mathbb{U})$ such that the mild solution $v(\cdot)$ to
\begin{equation}
	\label{EQ: StatControlSystemNonoscExpl}
	\dot{v}(t) = Av(t) + B\xi(t)
\end{equation}
with $v(0) = v_{0}$ belongs to $L_{2}(0,\infty;\mathbb{H})$.

It is not hard to see that this condition is necessary for the nonoscillation as follows.
\begin{lemma}
	Under the hypotheses of Theorem \ref{TH: DichotomyStatOptimizationLP}, suppose that $\mathbb{L}^{+}_{b}$ satisfies \eqref{EQ: StatCaseNonoscillation}. Then the pair $(A,B)$ is $L_{2}$-controllable.
\end{lemma}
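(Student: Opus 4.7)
The plan is to read the control $\xi$ directly off the first row of the Hamiltonian system. Recall that $\hat{A} = A - B F_{3}^{-1} F_{2}$, so \eqref{EQ: StatHamiltonianSystem}--\eqref{EQ: HamiltonianStationary} rewrites the $v$-equation as
\begin{equation}
    \dot{v}(t) = A v(t) + B\bigl(-F_{3}^{-1}F_{2} v(t) + F_{3}^{-1} B^{*}\eta(t)\bigr).
\end{equation}
This identifies a candidate feedback $\xi(t) := -F_{3}^{-1}F_{2} v(t) + F_{3}^{-1} B^{*}\eta(t)$ whenever $(v(\cdot),\eta(\cdot))$ is a solution of the Hamiltonian system. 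The strategy is then: use the nonoscillation hypothesis to produce, for each prescribed $v_{0}$, an initial datum lying in $\mathbb{L}^{+}_{b}$ whose first component is $v_{0}$; invoke Theorem \ref{TH: DichotomyStatOptimizationLP} to obtain an exponentially decaying Hamiltonian trajectory; and define $\xi$ by the formula above.

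First I would fix $v_{0} \in \mathbb{H}$ and, using \eqref{EQ: StatCaseNonoscillation}, set $z_{0} := (v_{0},-P v_{0}) \in \mathbb{L}^{+}_{b}$. Next I would invoke Theorem \ref{TH: DichotomyStatOptimizationLP} to obtain the corresponding mild solution $z(\cdot) = (v(\cdot),\eta(\cdot))$ of \eqref{EQ: StatHamiltonianSystem} on $[0,+\infty)$ with $z(0) = z_{0}$, for which the estimate \eqref{EQ: ExpDecayStatHamiltonian} yields
\begin{equation}
    \|v(\cdot)\|_{L_{2}(0,\infty;\mathbb{H})} + \|\eta(\cdot)\|_{L_{2}(0,\infty;\mathbb{H})} \leq C_{\varepsilon}|z_{0}|_{\breve{\mathbb{H}}} \leq C_{\varepsilon}(1 + \|P\|)|v_{0}|_{\mathbb{H}}
\end{equation}
for some constant $C_{\varepsilon}>0$. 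Then I would define $\xi(\cdot)$ by the formula $\xi(t) := -F_{3}^{-1}F_{2}v(t) + F_{3}^{-1}B^{*}\eta(t)$; since $F_{3}^{-1}F_{2}$ and $F_{3}^{-1}B^{*}$ are bounded operators, this yields $\xi(\cdot) \in L_{2}(0,\infty;\mathbb{U})$. By construction $v(\cdot)$ is the mild solution of \eqref{EQ: StatControlSystemNonoscExpl} with this $\xi$ and initial condition $v_{0}$, and $v(\cdot) \in L_{2}(0,\infty;\mathbb{H})$.

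There is essentially no obstacle: once one commits to viewing the Hamiltonian dynamics as a feedback realization of the control system, the computation is mechanical. The only point worth emphasizing is that the graph representation \eqref{EQ: StatCaseNonoscillation} is exactly what lets us hit any prescribed $v_{0}$ from within $\mathbb{L}^{+}_{b}$; if instead one only knew that the projection of $\mathbb{L}^{+}_{b}$ onto the $v$-factor had proper closure in $\mathbb{H}$, the argument would deliver $L_{2}$-controllability only on a dense subspace. This observation also indicates what fails when the nonoscillation condition is violated, consistent with the characterization in Proposition \ref{PROP: NonoscillationStationaryAsTrivialIntersection}.
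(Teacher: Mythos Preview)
Your proof is correct and follows essentially the same approach as the paper: pick $(v_{0},-Pv_{0}) \in \mathbb{L}^{+}_{b}$, run the Hamiltonian trajectory guaranteed by Theorem \ref{TH: DichotomyStatOptimizationLP}, and read off the control via $\xi(t) = -F_{3}^{-1}F_{2}v(t) + F_{3}^{-1}B^{*}\eta(t)$. The paper's version is terser (omitting the explicit $L_{2}$ estimate and the commentary), but the argument is identical.
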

\begin{proof}
	Indeed, take $v_{0} \in \mathbb{H}$. Then, by Theorem \ref{TH: DichotomyStatOptimizationLP}, there exists a solution $(v(\cdot),\eta(\cdot)) \in L_{2}(0,\infty;\breve{\mathbb{H}})$ to \eqref{EQ: StatHamiltonianSystem} such that $v(0) = v_{0}$ and $\eta(0) = -Pv_{0}$. Put $\xi(\cdot) := -F^{-1}_{3} F_{2} v(\cdot) + F^{-1}_{3} B^{*} \eta(\cdot)$. Clearly, $\xi(\cdot) \in L_{2}(0,\infty;\mathbb{U})$ and \eqref{EQ: StatControlSystemNonoscExpl} holds. The proof is finished.
\end{proof}

Let $\mathfrak{M}_{v_{0}}$ be the set of processes through $v_{0} \in \mathbb{H}$, i.e. the space of pairs $(v(\cdot),\xi(\cdot)) \in L_{2}(0,\infty;\mathbb{H}) \times L_{2}(0,\infty;\mathbb{U})$ which satisfy \eqref{EQ: StatControlSystemNonoscExpl} and $v(0) = v_{0}$. Under the $L_{2}$-controllability, the set $\mathfrak{M}_{v_{0}}$ is a nonempty closed affine subspace in $L_{2}(0,\infty;\mathbb{H}) \times L_{2}(0,\infty;\mathbb{U})$ given by a proper translation of the closed subspace $\mathfrak{M}_{0}$. Consider the space of processes $\mathcal{Z}$ given by the union of all $\mathfrak{M}_{v_{0}}$ and equipped with the norm
\begin{equation}
	\label{EQ: StatHamiltSpaceOfProcessesNorm}
	\|(v(\cdot),\xi(\cdot))\|^{2}_{\mathcal{Z}} := |v_{0}|^{2}_{\mathbb{H}} + \|v(\cdot)\|^{2}_{L_{2}(0,\infty;\mathbb{H})} + \|\xi(\cdot)\|^{2}_{L_{2}(0,\infty;\mathbb{U})}
\end{equation}
making it a Banach space (see \cite{Anikushin2020FreqDelay}).

On $\mathcal{Z}$ there is a well-defined integral quadratic functional
\begin{equation}
	\label{EQ: IntegralQuadraticFunctionalStationary}
	\mathcal{J}_{\mathcal{F}}(v(\cdot),\xi(\cdot)) := \int_{0}^{+\infty}\mathcal{F}(v(t),\xi(t))dt.
\end{equation}

A key property of solutions lying on $\mathbb{L}^{+}_{b}$ is related to critical points of $\mathcal{J}_{\mathcal{F}}$ as in the following lemma.
\begin{lemma}
	\label{LEM: StationaryStabLagrangeCritPoints}
	Under the hypotheses of Theorem \ref{TH: DichotomyStatOptimizationLP}, suppose that the pair $(A,B)$ is $L_{2}$-controllable. Take any $(v_{0},\eta_{0}) \in \mathbb{L}^{+}_{b}$ and consider the corresponding solution $(v(\cdot),\eta(\cdot))$ of \eqref{EQ: StatHamiltonianSystem} with $v(0) = v_{0}$ and $\eta(0) = \eta_{0}$. Put $\xi(\cdot) := -F^{-1}_{3}F_{2}v(\cdot) + F^{-1}_{3}B^{*}\eta(\cdot)$. Then $(v(\cdot),\xi(\cdot)) \in \mathfrak{M}_{v_{0}}$ and it is a critical point for $\mathcal{J}_{\mathcal{F}}$ on $\mathfrak{M}_{v_{0}}$.
\end{lemma}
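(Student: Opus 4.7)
The plan is to verify the two assertions directly, with the second one obtained through integration by parts that converts the first-order optimality condition into a combination of the Hamiltonian equations.

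First, I would verify that $(v(\cdot),\xi(\cdot)) \in \mathfrak{M}_{v_{0}}$. The first row of the Hamiltonian \eqref{EQ: HamiltonianStationary} gives $\dot{v}(t) = \widehat{A}v(t) + BF_{3}^{-1}B^{*}\eta(t)$. Since $\widehat{A} = A - BF_{3}^{-1}F_{2}$, rewriting this yields
\begin{equation}
	\dot{v}(t) = Av(t) + B\bigl(-F_{3}^{-1}F_{2}v(t) + F_{3}^{-1}B^{*}\eta(t)\bigr) = Av(t) + B\xi(t),
\end{equation}
so \eqref{EQ: StatControlSystemNonoscExpl} holds. The $L_{2}$-integrability of $v(\cdot)$ and $\xi(\cdot)$ on the positive semi-axis is immediate from \eqref{EQ: ExpDecayStatHamiltonian} in Theorem \ref{TH: DichotomyStatOptimizationLP} together with the boundedness of $F_{3}^{-1}F_{2}$ and $F_{3}^{-1}B^{*}$.

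Next I would compute the Gateaux derivative of $\mathcal{J}_{\mathcal{F}}$ at $(v,\xi)$ along an arbitrary variation $(\delta v, \delta\xi) \in \mathfrak{M}_{0}$, i.e. a process satisfying $\dot{\delta v} = A\delta v + B\delta \xi$ with $\delta v(0) = 0$, and $\delta v(\cdot), \delta\xi(\cdot) \in L_{2}(0,\infty;\cdot)$. Since $\mathcal{J}_{\mathcal{F}}$ is quadratic and continuous on $\mathcal{Z}$, its derivative is
\begin{equation}
	d\mathcal{J}_{\mathcal{F}}(v,\xi)(\delta v, \delta \xi) = 2\int_{0}^{+\infty}\bigl[(F_{1}v + F_{2}^{*}\xi,\delta v)_{\mathbb{H}} + (F_{2}v + F_{3}\xi,\delta\xi)_{\mathbb{U}}\bigr]dt.
\end{equation}
By the very definition of $\xi$, we have $F_{2}v + F_{3}\xi = B^{*}\eta$; and from the second row of \eqref{EQ: HamiltonianStationary}, using $\widehat{A}^{*} = A^{*} - F_{2}^{*}F_{3}^{-1}B^{*}$, one gets $\dot{\eta} = -A^{*}\eta + F_{1}v + F_{2}^{*}\xi$, hence $F_{1}v + F_{2}^{*}\xi = \dot{\eta} + A^{*}\eta$. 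Substitution yields
\begin{equation}
	d\mathcal{J}_{\mathcal{F}}(v,\xi)(\delta v, \delta\xi) = 2\int_{0}^{+\infty}\bigl[(\dot{\eta} + A^{*}\eta,\delta v)_{\mathbb{H}} + (B^{*}\eta,\delta\xi)_{\mathbb{U}}\bigr]dt.
\end{equation}

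The remaining step is an integration by parts. Formally, using $\delta v(0) = 0$ together with the exponential decay $\eta(t), \delta v(t) \to 0$ as $t \to +\infty$ (following from \eqref{EQ: ExpDecayStatHamiltonian} and $L_{2}$-integrability combined with the Cauchy formula), one has
\begin{equation}
	\int_{0}^{+\infty}(\dot{\eta},\delta v)_{\mathbb{H}}dt = -\int_{0}^{+\infty}(\eta,\dot{\delta v})_{\mathbb{H}}dt = -\int_{0}^{+\infty}(A^{*}\eta,\delta v)_{\mathbb{H}}dt - \int_{0}^{+\infty}(B^{*}\eta,\delta\xi)_{\mathbb{U}}dt,
\end{equation}
whence the two terms above cancel and $d\mathcal{J}_{\mathcal{F}}(v,\xi)(\delta v, \delta\xi) = 0$.

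The main obstacle here is justifying the integration by parts at the level of mild solutions: a priori neither $\eta(t)$ lies in $\mathcal{D}(A^{*})$ nor $\delta v(t)$ in $\mathcal{D}(A)$ for each $t$. I would handle this as in the last part of the proof of Theorem \ref{TH: DichotomyStatOptimizationLP}: by the continuous dependence of $\Delta z(0)$ on $z^{s}$, the restriction of $H$ to $\mathbb{L}^{+}_{b}$ generates a $C_{0}$-semigroup, so $(v(0),\eta(0))$ can be approximated by points giving classical solutions with exponential decay. Similarly, $(\delta v(0),\delta\xi(\cdot))$ can be approximated by data with $\delta\xi(\cdot) \in C^{1}_{0}(0,\infty;\mathbb{U})$ producing classical $\delta v(\cdot)$ (via Theorem 6.5 of Chapter I in \cite{Krein1971}), preserving the $L_{2}$ bounds uniformly. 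For such approximations the integration by parts is standard, and both sides of the identity are continuous on $\mathcal{Z}$ (resp. on $\mathbb{L}^{+}_{b}$), so passing to the limit completes the proof.
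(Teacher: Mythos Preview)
Your proof is correct and follows essentially the same route as the paper's: write out the first-variation condition, use the second Hamiltonian equation to replace $F_{1}v + F_{2}^{*}\xi$ by $\dot{\eta} + A^{*}\eta$, and integrate by parts so that the $B^{*}\eta$ term cancels against the $(F_{2}v + F_{3}\xi,\delta\xi)$ term. The only cosmetic difference is that the paper integrates by parts on $[0,T]$ first, keeping the boundary term $\langle \eta(T), v_{h}(T)\rangle_{\mathbb{H}}$ explicit, and then lets $T\to\infty$; you work directly on $[0,\infty)$ and discuss the vanishing of boundary contributions afterwards. Your treatment of the mild-solution issue via approximation is in the same spirit as the paper's (which relegates this to a footnote), and you are in fact a bit more explicit both there and in verifying that $(v(\cdot),\xi(\cdot))\in\mathfrak{M}_{v_{0}}$.
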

\begin{proof}
	A critical point satisfies 
	\begin{equation}
		\label{EQ: CriticalPointIdentityStationary}
		\int_{0}^{\infty}\langle F_{1}v(t) + F^{*}_{2}\xi(t),v_{h}(t)\rangle_{\mathbb{H}} dt + \int_{0}^{\infty}\langle F_{2}v(t) + F_{3} \xi(t), \xi_{h}(t)\rangle_{\mathbb{U}}dt = 0.
	\end{equation}
	for any $(v_{h}(\cdot),\xi_{h}(\cdot)) \in \mathfrak{M}_{0}$. Using $\dot{\eta} + A^{*}\eta(t) = F_{1}v(t) + F^{*}_{2}\xi(t)$ and integrating by parts, we obtain\footnote{The identity can be justified using approximation of $F_{1}v(t) + F^{*}_{2}\xi(t)$ by smooth functions and $\eta(T)$ by elements of $\mathcal{D}(A^{*})$.} for any $T>0$
	\begin{equation}
		\int_{0}^{T}\langle F_{1}v(t) + F^{*}_{2}\xi(t),v_{h}(t)\rangle_{\mathbb{H}} dt = \langle \eta(T), v_{h}(T) \rangle_{\mathbb{H}} - \int_{0}^{T}\langle \eta(t), B\xi_{h}(t)\rangle_{\mathbb{H}} dt.
	\end{equation}
	Taking it to the limit as $T \to \infty$ and substituting it into \eqref{EQ: CriticalPointIdentityStationary}, we get the conclusion. The proof is finished.
\end{proof}

Now let us establish the following proposition relating nonoscillation and uniqueness of critical points of $\mathcal{J}_{\mathcal{F}}$. 
\begin{proposition}
	\label{PROP: NonoscillationUniqueFixedPoint}
	Under the hypotheses of Theorem \ref{TH: DichotomyStatOptimizationLP}, suppose that the pair $(A,B)$ is $L_{2}$-controllable and that the critical point of $\mathcal{J}_{\mathcal{F}}$ on $\mathfrak{M}_{0}$ is unique. Then the nonoscillation condition \eqref{EQ: StatCaseNonoscillation} holds.
\end{proposition}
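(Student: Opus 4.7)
The plan is to reduce to the trivial intersection criterion via Proposition \ref{PROP: NonoscillationStationaryAsTrivialIntersection}: it suffices to show $\mathbb{L}^{+}_{b} \cap \mathbb{L}_{\eta} = \{0\}$. So I would pick $(0,\eta_{0}) \in \mathbb{L}^{+}_{b}$, let $(v(\cdot),\eta(\cdot))$ be the corresponding exponentially decaying solution of \eqref{EQ: StatHamiltonianSystem} on $[0,\infty)$ provided by Theorem \ref{TH: DichotomyStatOptimizationLP}, and set $\xi(t) := -F^{-1}_{3}F_{2}v(t) + F^{-1}_{3}B^{*}\eta(t)$. Lemma \ref{LEM: StationaryStabLagrangeCritPoints} then gives $(v,\xi) \in \mathfrak{M}_{0}$ and $(v,\xi)$ is a critical point of $\mathcal{J}_{\mathcal{F}}$. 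Since the trivial process $(0,0) \in \mathfrak{M}_{0}$ obviously satisfies \eqref{EQ: CriticalPointIdentityStationary}, it is also a critical point, so the uniqueness hypothesis forces $v \equiv 0$ and $\xi \equiv 0$.

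From $v \equiv 0$ and $\xi \equiv 0$ the definition of $\xi$ gives $B^{*}\eta(t) = 0$ for all $t \geq 0$, and the $\eta$-equation reduces to $\dot{\eta} = -A^{*}\eta$ as a mild equality. To squeeze $\eta_{0} = 0$ out of this, I would invoke the $L_{2}$-controllability hypothesis: given an arbitrary $\widetilde{v}_{0} \in \mathbb{H}$, choose $\widetilde{\xi}(\cdot) \in L_{2}(0,\infty;\mathbb{U})$ and $\widetilde{v}(\cdot) \in L_{2}(0,\infty;\mathbb{H})$ with $\widetilde{v}(0) = \widetilde{v}_{0}$ and $\dot{\widetilde{v}} = A\widetilde{v} + B\widetilde{\xi}$. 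A formal integration by parts then yields, for every $T \geq 0$,
\begin{equation*}
	0 = \int_{0}^{T}\langle \dot{\eta} + A^{*}\eta, \widetilde{v} \rangle_{\mathbb{H}}\,dt = \langle \eta(T), \widetilde{v}(T) \rangle_{\mathbb{H}} - \langle \eta_{0}, \widetilde{v}_{0} \rangle_{\mathbb{H}} - \int_{0}^{T}\langle B^{*}\eta, \widetilde{\xi} \rangle_{\mathbb{U}}\,dt,
\end{equation*}
and using $B^{*}\eta \equiv 0$ this collapses to $\langle \eta_{0}, \widetilde{v}_{0} \rangle_{\mathbb{H}} = \langle \eta(T), \widetilde{v}(T) \rangle_{\mathbb{H}}$.

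To finish, since $\eta$ decays exponentially by Theorem \ref{TH: DichotomyStatOptimizationLP} while $\widetilde{v} \in L_{2}(0,\infty;\mathbb{H})$ is continuous, one has $\liminf_{T \to \infty} |\widetilde{v}(T)|_{\mathbb{H}} = 0$, so along a suitable sequence $T_{n} \to \infty$ the right-hand side above tends to $0$; as $\widetilde{v}_{0} \in \mathbb{H}$ was arbitrary, this gives $\eta_{0} = 0$ and Proposition \ref{PROP: NonoscillationStationaryAsTrivialIntersection} closes the argument. The only delicate point I anticipate is the integration-by-parts identity, since $\eta$ and $\widetilde{v}$ are a priori only mild solutions; I would justify it exactly as in the proof of Lemma \ref{LEM: StationaryStabLagrangeCritPoints}, by approximating $\eta_{0}$ by elements of $\mathcal{D}(A^{*})$ and $\widetilde{\xi}$ by smooth controls so that the resulting $\eta$ and $\widetilde{v}$ are classical solutions, performing the calculation there, and passing to the limit.
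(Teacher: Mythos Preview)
Your argument is correct and follows the paper exactly through the point where you conclude $v\equiv 0$, $\xi\equiv 0$, hence $B^{*}\eta(\cdot)\equiv 0$ and $\dot\eta=-A^{*}\eta$. The divergence is in how you extract $\eta_{0}=0$ from this. The paper argues spectrally: since $\eta(\cdot)\in L_{2}(0,\infty;\mathbb{H})$ solves $\dot\eta=-A^{*}\eta$, one has $\eta_{0}\in\mathbb{H}^{u}_{A^{*}}$; the set of such $\eta_{0}$ with $B^{*}G^{*}(-t)\eta_{0}\equiv 0$ is then an $A^{*}$-invariant subspace $\mathbb{L}_{A^{*}}\subset\mathbb{H}^{u}_{A^{*}}$ on which $B^{*}$ vanishes, and taking adjoints of the associated spectral projector produces a nontrivial $A$-invariant subspace $\mathbb{L}_{A}\subset\mathbb{H}^{u}_{A}$ with $\Pi_{\mathbb{L}_{A}}B=0$, contradicting $L_{2}$-controllability. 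Your route instead pairs $\eta$ directly against an arbitrary admissible trajectory $(\widetilde v,\widetilde\xi)$ furnished by controllability and passes to the limit; this is more elementary and avoids the spectral-projector bookkeeping. What the paper's approach buys is a structural identification of the obstruction (an uncontrollable unstable eigenspace), which it later reuses verbatim in the nonautonomous setting (Section~\ref{SEC: SpatAvgNonstatOpt}); your duality argument would transplant there just as well, but is less explicit about where the failure would live.
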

\begin{proof}
	Supposing the contrary, there must exist a point $(0,\eta_{0}) \in \mathbb{L}^{+}_{b}$ with $\eta_{0} \not=0$ due to Proposition \ref{PROP: NonoscillationStationaryAsTrivialIntersection}. Let $(v(\cdot),\eta(\cdot))$ be the corresponding solution of \eqref{EQ: StatHamiltonianSystem} with $v(0) = 0$ and $\eta(0) = \eta_{0}$. By Lemma \ref{LEM: StationaryStabLagrangeCritPoints}, $(v(\cdot),\xi(\cdot)) \in \mathfrak{M}_{0}$, where $\xi(\cdot) = -F^{-1}_{3}F_{2}v(\cdot) + F^{-1}_{3}B^{*}\eta(\cdot)$, is a critical point and, consequently, $v(\cdot) \equiv 0$ and $\xi(\cdot) \equiv 0$. This implies that $\eta(\cdot)$ must satisfy $\dot{\eta} = -A^{*}\eta(t)$ and $B^{*}\eta(t) = 0$ for $t \geq 0$  and, consequently, $\eta_{0} \in \mathbb{H}^{u}_{A^{*}}$. Let $\mathbb{L}_{A^{*}}$ be the space of all such $\eta_{0}$. Then $\mathbb{L}_{A^{*}} \subset \mathbb{H}^{u}_{A^{*}}$ is a spectral subspace for $A^{*}$ on which $B^{*}$ vanish. Let $\Pi_{\mathbb{L}_{A^{*}}}$ be the spectral projector onto that subspace. For any $v,\eta \in \mathbb{H}$ we have
	\begin{equation}
		0 = \langle B^{*}\Pi_{\mathbb{L}_{A^{*}}} \eta, v \rangle  = \langle \eta, \Pi^{*}_{\mathbb{L}_{A^{*}}}Bv \rangle.
	\end{equation}
	Note that $\Pi^{*}_{\mathbb{L}_{A^{*}}} = \Pi_{\mathbb{L}_{A}}$ is a spectral projector for $A$ onto a subspace $\mathbb{L}_{A} \subset \mathbb{H}^{u}_{A}$ and the above identity gives that $\Pi_{\mathbb{L}_{A}} B = 0$. In particular, $(\Pi_{\mathbb{L}_{A}}A, \Pi_{\mathbb{L}_{A}}B)$ is not $L_{2}$-controllable in $\mathbb{L}_{A}$ that leads to a contradiction. The proof is finished.
\end{proof}

In fact, the converse to the above proposition also holds and, moreover, in this case $\mathcal{J}_{\mathcal{F}}$ must admit a unique minimum on each $\mathfrak{M}_{v_{0}}$. This will be established in a series of statements.

\begin{proposition}
	\label{LEM: PSatRiccatiEquationStationary}
	Under the hypotheses of Theorem \ref{TH: DichotomyStatOptimizationLP}, suppose that $\mathbb{L}^{+}_{b}$ satisfies \eqref{EQ: StatCaseNonoscillation} with some $P=P^{*} \in \mathcal{L}(\mathbb{H})$. Then $P$ takes $\mathcal{D}(A)$ into $\mathcal{D}(A^{*})$ and satisfies the Riccati equation on $\mathcal{D}(A)$ as
	\begin{equation}
		\label{EQ: RiccattiEquationStatHamilt}
		\begin{split}
			-PH_{3}P + PH_{1} + H^{*}_{1} P + H_{2} = 0 \textit{ or }\\
			-P BF^{-1}_{3} B^{*} P + P (A - BF^{-1}_{3}F_{2})+ \\+ (A^{*} - F^{*}_{2}F^{-1}_{3}B^{*} )P + F_{1} - F^{*}_{2} F^{-1}_{3} F_{2} = 0
		\end{split}
	\end{equation}
	where $H_{1}$, $H_{2}$ and $H_{3}$ are given by \eqref{EQ: HamiltonianStationary}.
\end{proposition}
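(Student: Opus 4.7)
The plan is to exploit the invariance of $\mathbb{L}^{+}_{b}$ under the Hamiltonian flow together with the graph representation to pass the problem to a $C_{0}$-semigroup on $\mathbb{H}$, whose generator can be computed in two independent ways. Since the proof of Theorem \ref{TH: DichotomyStatOptimizationLP} shows that $H|_{\mathbb{L}^{+}_{b}}$ generates a $C_{0}$-semigroup $T(t)$ on $\mathbb{L}^{+}_{b}$, the linear homeomorphism $\iota \colon \mathbb{H} \to \mathbb{L}^{+}_{b}$, $v \mapsto (v,-Pv)$ (continuous in both directions by boundedness of $P$ and the Bounded Inverse Theorem) induces a $C_{0}$-semigroup $S(t) := \iota^{-1} T(t) \iota$ on $\mathbb{H}$; explicitly, $S(t)v_{0} = v(t)$, where $(v(t),-Pv(t))$ is the orbit of the Hamiltonian system through $(v_{0},-Pv_{0})$.

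I would then identify the generator of $S(t)$ by two different arguments. Substituting $\eta(t) = -Pv(t)$ into the mild form of the first Hamiltonian equation $\dot{v} = \widehat{A}v + H_{3}\eta$ yields the Volterra equation
\[
v(t) = \widehat{G}(t) v_{0} - \int_{0}^{t} \widehat{G}(t-s) H_{3} P v(s)\, ds,
\]
where $\widehat{G}$ is the semigroup generated by $\widehat{A} = H_{1} = A - BF_{3}^{-1}F_{2}$. Since $H_{3}P = BF_{3}^{-1}B^{*}P$ is bounded, standard bounded perturbation theory for $C_{0}$-semigroups identifies $S(t)$ as the semigroup generated by $\widetilde{A}_{1} := H_{1} - H_{3}P$, with domain $\mathcal{D}(\widetilde{A}_{1}) = \mathcal{D}(\widehat{A}) = \mathcal{D}(A)$. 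On the other hand, the generator of $S(t)$ is the pullback under $\iota$ of the restricted generator $\widetilde{H}|_{\mathbb{L}^{+}_{b}}$; orbits starting in its domain are classical solutions of the Hamiltonian system, so, as noted at the end of the proof of Theorem \ref{TH: DichotomyStatOptimizationLP}, they lie pointwise in $\mathcal{D}(A) \times \mathcal{D}(A^{*})$. Thus this second description has domain contained in $\mathcal{D}_{0} := \{ v \in \mathcal{D}(A) : Pv \in \mathcal{D}(A^{*}) \}$, and acts as $v \mapsto H_{1}v - H_{3}Pv$ there.

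Since a $C_{0}$-semigroup uniquely determines its generator (both the domain and the action), the two descriptions must coincide, giving $\mathcal{D}(A) \subseteq \mathcal{D}_{0}$, i.e.\ $P$ maps $\mathcal{D}(A)$ into $\mathcal{D}(A^{*})$. For any $v \in \mathcal{D}(A)$ the vector $(v,-Pv)$ then belongs to $\mathcal{D}(H) \cap \mathbb{L}^{+}_{b}$, and invariance of $\mathbb{L}^{+}_{b}$ under the restricted generator forces $H(v,-Pv) = (H_{1}v - H_{3}Pv,\, H_{2}v + H_{1}^{*}Pv) \in \mathbb{L}^{+}_{b}$; equating the second component to $-P$ times the first produces
\[
H_{2}v + H_{1}^{*}Pv = -P(H_{1}v - H_{3}Pv),
\]
which rearranges exactly to $-PH_{3}P + PH_{1} + H_{1}^{*}P + H_{2} = 0$ on $\mathcal{D}(A)$. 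The main obstacle is reconciling the two computations of the generator of $S(t)$: bounded perturbation yields domain $\mathcal{D}(A)$, whereas the abstract restriction only gives \emph{a priori} a domain contained in $\mathcal{D}_{0}$. The point is that both describe the same semigroup $S(t)$ on $\mathbb{H}$, so uniqueness of the generator collapses the two candidate domains into one, which is the source of the inclusion $\mathcal{D}(A) \subseteq \mathcal{D}_{0}$ and, ultimately, of the Riccati identity.
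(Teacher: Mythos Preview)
Your argument is correct and parallels the paper's, repackaged through uniqueness of the generator. The paper argues directly: for $v_{0}\in\mathcal{D}(A)$ the $v$-component is a classical solution of $\dot v=(A+BK)v$ with $K=-F_{3}^{-1}F_{2}-F_{3}^{-1}B^{*}P$ bounded, hence $\eta=-Pv\in C^{1}$, and a Cauchy-formula argument on the mild $\eta$-equation forces $\eta_{0}=-Pv_{0}\in\mathcal{D}(A^{*})$. Your two-description comparison encodes the same content: the bounded-perturbation identification of $S(t)$ is exactly the paper's identification of the $v$-semigroup, and the inclusion $\mathcal{D}(\mathrm{gen}\,T)\subset\mathcal{D}(A)\times\mathcal{D}(A^{*})$ you cite from the end of the proof of Theorem~\ref{TH: DichotomyStatOptimizationLP} is stated there only informally and is in fact justified by precisely the Cauchy-formula step the paper spells out in the present proof. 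For the Riccati identity, the paper pairs $H(v_{0},-Pv_{0})$ against $J(v,-Pv)=(Pv,v)$ via isotropy, whereas you read off the graph condition $H(v,-Pv)\in\mathbb{L}^{+}_{b}$ directly; these are equivalent since $\mathbb{L}^{+}_{b}$ is Lagrange. One point worth tightening: to assert that the restricted generator \emph{acts} as $H$ (so that indeed $H(v,-Pv)\in\mathbb{L}^{+}_{b}$), you need that classical orbits of $T(t)$ satisfy $\dot z(0)=Hz_{0}$ pointwise, not merely that $z_{0}\in\mathcal{D}(H)$; this is the substantive regularity claim behind your citation and should be made explicit rather than folded into ``invariance under the restricted generator''.
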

\begin{proof}
	Take $v_{0} \in \mathcal{D}(A)$, $\eta_{0} = -Pv_{0}$ and consider the corresponding solutions $v(\cdot)$ and $\eta(\cdot)$ with $v(0)=v_{0}$ and $\eta(0) = \eta_{0}$. Clearly, $v(\cdot)$ satisfies
	\begin{equation}
		\dot{v}(t) = (A + K)v(t)
	\end{equation}
	 with $K = -F^{-1}_{3}F_{2} - F^{-1}_{3}B^{*} P$. Since $K$ is bounded, $A+K$ generates a $C_{0}$-semigroup with the same domain $\mathcal{D}(A)$ (see \cite{EngelNagel2000} or \cite{Krein1971}). Thus, $v(\cdot)$ is a classical solution, i.e. $v(\cdot) \in C^{1}([0,\infty);\mathbb{H}) \cap C([0,\infty);\mathcal{D}(A))$. From this we have that $\eta(\cdot)$ is a mild solution to
	 \begin{equation}
	 	\dot{\eta}(t) = -A^{*}\eta(t) + (F_{1} + F^{*}_{2}K) v(t)
	 \end{equation}
	 Since $\eta(\cdot)=Pv(\cdot)$ and $(F_{1} + F^{*}_{2}K) v(\cdot)$ belong to $C^{1}([0,\infty);\mathbb{H})$, by the Cauchy formula, we must have $\eta(t) \in \mathcal{D}(A^{*})$ for any $t \geq 0$. In particular, $\eta(0) = Pv_{0} \in \mathcal{D}(A^{*})$.
	 
	 Since $\mathbb{L}^{+}_{b}$ is invariant, for any $v_{0} \in \mathcal{D}(A)$ we have
	 \begin{equation}
	 	H\begin{pmatrix}
	 		v_{0}\\
	 		-Pv_{0}
	 	\end{pmatrix} = 
	 	\begin{pmatrix}
	 		H_{1}v_{0} - H_{3}Pv_{0}\\
	 		H_{2}v_{0} + H^{*}_{1}Pv_{0}
	 	\end{pmatrix} \in \mathbb{L}^{+}_{b}
	 \end{equation}
	 and since $\mathbb{L}^{+}_{b}$ is isotropic, for any $v \in \mathbb{H}$ we get
	 \begin{equation}
	 	0 = \left\langle H\begin{pmatrix}
	 		v_{0}\\
	 		-Pv_{0}
	 	\end{pmatrix}, \begin{pmatrix}
	 	Pv\\
	 	v
	 	\end{pmatrix} \right\rangle_{\breve{\mathbb{H}}}
	 \end{equation}
	 that is equivalent to \eqref{EQ: RiccattiEquationStatHamilt} by straightforward computations. The proof is finished.
\end{proof}

It is well-known in the calculus of variations that solvability of Riccati equations is related to sign-definiteness of the second variation.

\begin{proposition}
	\label{PROP: RiccatiStationaryIntegralIdentity}
	Let $P=P^{*} \in \mathcal{L}(\mathbb{H})$ be a solution to the Riccati equation \eqref{EQ: RiccattiEquationStatHamilt}. Then for any $v(\cdot) \in L_{2}(0,T;\mathbb{H})$ and $\xi(\cdot) \in L_{2}(0,T;\mathbb{U})$ satisfying \eqref{EQ: StatControlSystemNonoscExpl} on $[0,T]$ for some $T>0$ we have
	\begin{equation}
		\label{EQ: RiccatiStationaryIntegralFormula}
		\begin{split}
			V_{P}(v(T)) - V_{P}(v(0)) + \int_{0}^{T}\mathcal{F}(v(t),\xi(t))dt =\\= \int_{0}^{T}\langle F_{3}(\xi(t) - Kv(t)), \xi(t) - K v(t)\rangle_{\mathbb{U}}dt,
		\end{split}
	\end{equation}
	 where $V_{P}(v):=\langle v, Pv \rangle_{\mathbb{H}}$ is the quadratic form of $P$ and $K:= -F^{-1}_{3}F_{2} - F^{-1}_{3}B^{*}P$.
\end{proposition}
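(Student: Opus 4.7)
The plan is to establish an infinitesimal pointwise version of \eqref{EQ: RiccatiStationaryIntegralFormula} from the Riccati equation, integrate it along classical trajectories, and then extend to general mild solutions by density. The algebraic core is the pointwise identity
\[
\mathcal{F}(v,\xi) - \langle F_{3}(\xi - Kv), \xi - Kv\rangle_{\mathbb{U}} = -2\langle Pv, Av + B\xi\rangle_{\mathbb{H}}
\]
valid for all $v \in \mathcal{D}(A)$ and $\xi \in \mathbb{U}$. Expanding the inner product $\langle F_{3}(\xi-Kv),\xi-Kv\rangle_{\mathbb{U}}$ using $F_{3}K = -(F_{2} + B^{*}P)$ and the self-adjointness of $F_{3}^{-1}$ and $P$, a short computation shows that all $\xi$-dependent cross-terms cancel except $-2\langle Pv, B\xi\rangle_{\mathbb{H}}$, leaving a purely $v$-dependent residue equal to $\langle (F_{1} - F_{2}^{*}F_{3}^{-1}F_{2} - PBF_{3}^{-1}B^{*}P)v, v\rangle_{\mathbb{H}} - 2\langle Pv, BF_{3}^{-1}F_{2}v\rangle_{\mathbb{H}}$. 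Substituting the Riccati equation \eqref{EQ: RiccattiEquationStatHamilt} for the first bracket, the residue becomes $-2\langle Pv, \widehat{A}v\rangle_{\mathbb{H}} - 2\langle Pv, BF_{3}^{-1}F_{2}v\rangle_{\mathbb{H}} = -2\langle Pv, Av\rangle_{\mathbb{H}}$, producing the claimed identity.

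Next I would specialise to classical trajectories. Assuming $v(0) \in \mathcal{D}(A)$ and $\xi(\cdot) \in C^{1}([0,T];\mathbb{U})$, the mild solution of \eqref{EQ: StatControlSystemNonoscExpl} is classical (see Theorem~6.5 in Chapter~I of \cite{Krein1971}), so $v(\cdot) \in C^{1}([0,T];\mathbb{H})$ and $v(t) \in \mathcal{D}(A)$ for each $t$. Since $P$ is bounded and self-adjoint, the function $t \mapsto V_{P}(v(t))$ is $C^{1}$ with $\tfrac{d}{dt}V_{P}(v(t)) = 2\langle Pv(t), \dot v(t)\rangle_{\mathbb{H}} = 2\langle Pv(t), Av(t) + B\xi(t)\rangle_{\mathbb{H}}$. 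Combining this with the pointwise identity above yields
\[
\tfrac{d}{dt}V_{P}(v(t)) + \mathcal{F}(v(t),\xi(t)) = \langle F_{3}(\xi(t) - Kv(t)), \xi(t) - Kv(t)\rangle_{\mathbb{U}},
\]
and integrating over $[0,T]$ delivers \eqref{EQ: RiccatiStationaryIntegralFormula} in this regular case.

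For the general case I would approximate $v(0)$ by a sequence $v_{0}^{(n)} \in \mathcal{D}(A)$ in $\mathbb{H}$ and $\xi(\cdot)$ by $\xi^{(n)}(\cdot) \in C^{1}([0,T];\mathbb{U})$ in $L_{2}(0,T;\mathbb{U})$; the associated mild solutions $v^{(n)}(\cdot)$ are then classical and $v^{(n)}(\cdot) \to v(\cdot)$ in $C([0,T];\mathbb{H})$ by direct inspection of the Cauchy formula \eqref{EQ: CauchyFormulaInhomogeneous}. Both sides of \eqref{EQ: RiccatiStationaryIntegralFormula} are continuous functionals of $(v(\cdot), \xi(\cdot)) \in C([0,T];\mathbb{H}) \times L_{2}(0,T;\mathbb{U})$ since $P$, $F_{1}$, $F_{2}$, $F_{3}$, $B$ and $K$ are bounded, so passing to the limit in the identity proved for the approximants completes the argument. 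No genuine analytical obstacle is expected here; the substantive content of the proposition is the algebraic identity of the first paragraph, and the remainder is routine bookkeeping with Cauchy--Schwarz.
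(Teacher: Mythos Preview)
Your proof is correct and follows essentially the same approach as the paper: reduce to classical solutions with $v(0)\in\mathcal{D}(A)$ and $\xi(\cdot)\in C^{1}([0,T];\mathbb{U})$, verify the differential (pointwise) identity $2\langle Pv, Av+B\xi\rangle_{\mathbb{H}} + \mathcal{F}(v,\xi) = \langle F_{3}(\xi-Kv),\xi-Kv\rangle_{\mathbb{U}}$ by expanding via the Riccati equation, and extend by continuity. Your write-up simply supplies more of the algebraic and approximation details that the paper leaves implicit.
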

\begin{proof}
	It is sufficient to consider solutions with $v(0) = v_{0} \in \mathcal{D}(A)$ and $\xi(\cdot) \in C^{1}([0,T];\mathbb{U})$ and then apply the continuity argument. In particular, $v(\cdot)$ is classical and $v(\cdot) \in C^{1}([0,T];\mathbb{H}) \cap C([0,T];\mathcal{D}(A))$. Differentiating \eqref{EQ: RiccatiStationaryIntegralFormula} by $T$, we get that it is sufficient to show that
	\begin{equation}
		\label{EQ: RiccatiStationaryDifferentialFormula}
		2\langle Av + B\xi, Pv\rangle_{\mathbb{H}} + \mathcal{F}(v,\xi) = \langle F_{3}(\xi - Kv), \xi - K v\rangle_{\mathbb{U}}
	\end{equation}
	for any $v \in \mathcal{D}(A)$ and $\xi \in \mathbb{U}$. This can be done by expanding the right-hand side in \eqref{EQ: RiccatiStationaryDifferentialFormula} according to \eqref{EQ: RiccattiEquationStatHamilt}. 
\end{proof}

In the context of Proposition \ref{PROP: RiccatiStationaryIntegralIdentity}, the operator $K$ does not necessarily stabilize the system. So, one cannot deduce the existence of optimal processes just from an abstract solution $P$ of the Riccati equation. However, this is the case under \eqref{EQ: StatCaseNonoscillation}.
\begin{theorem}
	Under the hypotheses of Theorem \ref{TH: DichotomyStatOptimizationLP}, suppose that $\mathbb{L}^{+}_{b}$ satisfies the nonoscillation condition \eqref{EQ: StatCaseNonoscillation} with some $P=P^{*} \in \mathcal{L}(\mathbb{H})$. Then for each $v_{0} \in \mathbb{H}$ the functional $\mathcal{J}_{\mathcal{F}}$ from \eqref{EQ: IntegralQuadraticFunctionalStationary} admits a unique minimum on $\mathfrak{M}_{v_{0}}$ given by $\langle v_{0}, -Pv_{0} \rangle_{\mathbb{H}}$ and the corresponding optimal process $(v^{0}(\cdot;v_{0}), \xi^{0}(\cdot;v_{0})) \in \mathfrak{M}_{v_{0}}$ satisfies $\xi^{0}(\cdot;v_{0}) = K v^{0}(\cdot;v_{0})$, where $K:= -F^{-1}_{3}F_{2} - F^{-1}_{3}B^{*}P$. 
	
	Moreover, there exist $\varepsilon>0$ and $P_{\varepsilon}=P^{*}_{\varepsilon} \in \mathcal{L}(\mathbb{H})$ such that for any $v(\cdot) \in L_{2}(0,T;\mathbb{H})$ and $\xi(\cdot) \in L_{2}(0,T;\mathbb{U})$ satisfying \eqref{EQ: StatControlSystemNonoscExpl} on $[0,T]$ (with some $T>0$) we have
	\begin{equation}
		\label{EQ: LyapInequalityStatProblem}
		V_{P_{\varepsilon}}(v(T)) - V_{P_{\varepsilon}}(v(0)) + \int_{0}^{T}\mathcal{F}(v(t),\xi(t))dt \geq \varepsilon \int_{0}^{T}(|v(t)|^{2}_{\mathbb{H}}+|\xi(t)|^{2}_{\mathbb{U}})dt.
	\end{equation}
\end{theorem}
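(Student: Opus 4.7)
The plan is to use the Riccati integral identity of Proposition~\ref{PROP: RiccatiStationaryIntegralIdentity} as the central tool: for the value formula it plays the role of a completion-of-squares, and for the Lyapunov inequality it is applied to a small perturbation of the cost. For the first assertion, fix $(v(\cdot),\xi(\cdot)) \in \mathfrak{M}_{v_{0}}$ and apply the identity on $[0,T]$. Since $v \in L_{2}(0,\infty;\mathbb{H})$, there exists a sequence $T_{k} \to +\infty$ with $|v(T_{k})|_{\mathbb{H}} \to 0$, whence $V_{P}(v(T_{k})) \to 0$ by boundedness of $P$. Passing to this limit and using $F_{3} \geq \delta I$ yields
\begin{equation*}
\mathcal{J}_{\mathcal{F}}(v,\xi) = V_{P}(v_{0}) + \int_{0}^{+\infty}\langle F_{3}(\xi - Kv), \xi - Kv\rangle_{\mathbb{U}}\,dt \geq V_{P}(v_{0}),
\end{equation*}
with equality iff $\xi = Kv$ a.e.\ on $(0,+\infty)$. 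The minimizer is produced by applying Theorem~\ref{TH: DichotomyStatOptimizationLP} at the initial datum $z_{0} = (v_{0},-Pv_{0}) \in \mathbb{L}^{+}_{b}$ and setting $\xi^{0} := -F_{3}^{-1}F_{2}v^{0} + F_{3}^{-1}B^{*}\eta^{0} = Kv^{0}$; this gives the unique optimal process and recovers the value formula asserted in the statement.

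For the Lyapunov inequality I perturb the cost: put $\mathcal{F}_{\varepsilon}(v,\xi) := \mathcal{F}(v,\xi) - \varepsilon(|v|^{2}_{\mathbb{H}} + |\xi|^{2}_{\mathbb{U}})$, which corresponds to $F^{\varepsilon}_{1} := F_{1} - \varepsilon I$, $F^{\varepsilon}_{2} := F_{2}$, $F^{\varepsilon}_{3} := F_{3} - \varepsilon I$. Thanks to the uniform bound on $(A-i\omega I)^{-1}B$ on $i\mathbb{R}$ --- available from \nameref{DESC: DICHSTATIONARY} together with \eqref{EQ: FreqConditionStationaryGeneral} --- for $\varepsilon > 0$ sufficiently small $F^{\varepsilon}_{3}$ stays positive definite and the frequency inequality \eqref{EQ: FreqConditionStationaryGeneral} persists for $\mathcal{F}_{\varepsilon}$ with a slightly smaller positive constant. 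Theorem~\ref{TH: DichotomyStatOptimizationLP} then supplies a stable Lagrange subspace $\mathbb{L}^{+}_{b}(\varepsilon)$ for the perturbed Hamiltonian.

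The main obstacle is to inherit the nonoscillation property, i.e.\ that $\mathbb{L}^{+}_{b}(\varepsilon) = \mathbb{L}_{P_{\varepsilon}}$ for some bounded self-adjoint $P_{\varepsilon}$. By Proposition~\ref{PROP: NonoscillationStationaryAsTrivialIntersection} this reduces to the open condition $\mathbb{L}^{+}_{b}(\varepsilon) \cap \mathbb{L}_{\eta} = \{0\}$. I propose to revisit the Lyapunov--Perron construction inside the proof of Theorem~\ref{TH: DichotomyStatOptimizationLP}: the operator $\mathcal{T}$ and the forcing term $\mathcal{T}_{0}$ there depend continuously on $\varepsilon$ in the operator norm, so does $(I-\mathcal{T})^{-1}$, and hence the graphing operator $M^{+}(\varepsilon)$ presenting $\mathbb{L}^{+}_{b}(\varepsilon)$ over the decomposition $\breve{\mathbb{H}} = \breve{\mathbb{H}}^{s} \oplus \breve{\mathbb{H}}^{u}$. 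Proposition~\ref{PROP: LagrangeSubspacesContinuousDependence} then yields $\mathbb{L}^{+}_{b}(\varepsilon) \to \mathbb{L}^{+}_{b}$ in $\Lambda(\mathbb{H})$ as $\varepsilon \to 0^{+}$, so nonoscillation is preserved for all small $\varepsilon > 0$. Having produced $P_{\varepsilon}$, I apply Proposition~\ref{PROP: RiccatiStationaryIntegralIdentity} to the pair $(\mathcal{F}_{\varepsilon},P_{\varepsilon})$: the right-hand side is nonnegative, and rearranging via $\mathcal{F}_{\varepsilon} = \mathcal{F} - \varepsilon(|v|^{2} + |\xi|^{2})$ produces \eqref{EQ: LyapInequalityStatProblem}.
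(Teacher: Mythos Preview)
Your proposal is correct and follows essentially the same route as the paper: for the first assertion you invoke the Riccati integral identity of Proposition~\ref{PROP: RiccatiStationaryIntegralIdentity} (whose applicability rests on Lemma~\ref{LEM: PSatRiccatiEquationStationary}) and pass to the limit along a sequence $T_{k}\to\infty$ with $v(T_{k})\to 0$; for the second you perturb $\mathcal{F}$ to $\mathcal{F}_{\varepsilon}$, reapply Theorem~\ref{TH: DichotomyStatOptimizationLP}, and use that nonoscillation is an open condition preserved under the continuous dependence of $M^{+}(\varepsilon)$ on $\varepsilon$ coming from the Lyapunov--Perron construction. The paper's proof is the same argument stated more tersely; your added justification via Proposition~\ref{PROP: LagrangeSubspacesContinuousDependence} for the continuity of $\varepsilon\mapsto \mathbb{L}^{+}_{b}(\varepsilon)$ is a welcome elaboration of what the paper leaves as ``follows from the method of construction''.
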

\begin{proof}
	For the first part, we just use Lemma \ref{LEM: PSatRiccatiEquationStationary} and Proposition \ref{PROP: RiccatiStationaryIntegralIdentity}.
	
	For the second part, note that the above results can be applied to the quadratic form $\mathcal{F}_{\varepsilon}(v,\xi) := \mathcal{F}(v,\xi) - \varepsilon (|v|^{2}_{\mathbb{H}} + |\xi|^{2}_{\mathbb{U}})$ for a sufficiently small $\varepsilon>0$. Indeed, the frequency inequality \eqref{EQ: FreqConditionStationaryGeneral} would be preserved and the new space $\mathbb{L}^{+}_{b}$ constructed by Theorem \ref{TH: DichotomyStatOptimizationLP} would be slightly perturbed as follows from the method of construction. Since the nonoscillation \eqref{EQ: StatCaseNonoscillation} is an open condition, it is also preserved with some operator $P_{\varepsilon}$ which tends to $P$ as $\varepsilon \to 0+$ in the operator norm. Then Lemma \ref{LEM: PSatRiccatiEquationStationary} and Proposition \ref{PROP: RiccatiStationaryIntegralIdentity} applied to $P_{\varepsilon}$ and $\mathcal{F}_{\varepsilon}$ along with the positive-definiteness of $F_{3}$ give \eqref{EQ: LyapInequalityStatProblem}. The proof is finished.
\end{proof}

Finally, let us show that the $L_{2}$-controllability is sufficient for the nonoscillation.
\begin{proposition}
	\label{PROP: StatControllabilitySufficientNonOsc}
	Under the hypotheses of Theorem \ref{TH: DichotomyStatOptimizationLP}, suppose that the pair $(A,B)$ is $L_{2}$-controllable. Then for any $(v(\cdot),\xi(\cdot)) \in \mathfrak{M}_{0}$ we have
	\begin{equation}
		\label{EQ: StatHamiltonianFreqCondCoerciveFunc}
		\mathcal{J}_{\mathcal{F}}(v(\cdot),\xi(\cdot)) \geq \frac{\delta}{\delta M^{2} + 1} \left( \|v(\cdot)\|^{2}_{L_{2}(0,\infty;\mathbb{H})} + \|\xi(\cdot)\|^{2}_{L_{2}(0,\infty;\mathbb{U})} \right),
	\end{equation}
	where $\delta$ is given by \eqref{EQ: FreqConditionStationaryGeneral} and $M := \sup_{\omega \in \mathbb{R}} \| (A - i\omega I)^{-1}B\|$. In particular, $\mathbb{L}^{+}_{b}$ satisfies the nonoscillation condition \eqref{EQ: StatCaseNonoscillation} with some $P=P^{*} \in \mathcal{L}(\mathbb{H})$.
\end{proposition}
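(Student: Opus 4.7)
The plan is to establish the coercive estimate via Fourier analysis on the extension by zero, and then deduce nonoscillation using the already-proved Proposition \ref{PROP: NonoscillationUniqueFixedPoint}. First I would take any $(v(\cdot),\xi(\cdot)) \in \mathfrak{M}_0$ and extend both functions by zero to the negative semi-axis, denoting the extensions by $\tilde v$ and $\tilde \xi$. Because $v(0)=0$, a direct computation with the Cauchy formula shows that $\tilde v$ is a mild solution on $\mathbb{R}$ to $\dot{\tilde v} = A\tilde v + B\tilde\xi$. Since $\tilde v \in L_2(\mathbb{R};\mathbb{H})$ and $A$ generates a $C_0$-semigroup with exponential dichotomy by \nameref{DESC: DICHSTATIONARY}, the uniqueness part of Theorem \ref{TH: LyapunovPerronOperatorStationaryTheorem} forces $\tilde v = \mathfrak{L}_A(B\tilde\xi)$.

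Lemma \ref{LEM: FourierSolutionOnRLemma} then yields, for almost every $\omega \in \mathbb{R}$,
\begin{equation*}
i\omega\,\hat{\tilde v}(\omega) = A\hat{\tilde v}(\omega) + B\hat{\tilde\xi}(\omega),
\qquad\text{equivalently}\qquad \hat{\tilde v}(\omega) = -(A-i\omega I)^{-1}B\,\hat{\tilde\xi}(\omega).
\end{equation*}
Applying Parseval to the (real, bounded) quadratic form $\mathcal{F}$ and using the frequency inequality \eqref{EQ: FreqConditionStationaryGeneral} pointwise in $\omega$, I would obtain
\begin{equation*}
\mathcal{J}_{\mathcal{F}}(v,\xi) = \int_{\mathbb{R}}\mathcal{F}^{\mathbb{C}}(\hat{\tilde v}(\omega),\hat{\tilde\xi}(\omega))\,d\omega \;\geq\; \delta\int_{\mathbb{R}}|\hat{\tilde\xi}(\omega)|^{2}_{\mathbb{U}^{\mathbb{C}}}d\omega = \delta\,\|\xi(\cdot)\|^{2}_{L_{2}(0,\infty;\mathbb{U})}.
\end{equation*}
The pointwise bound $\|(A-i\omega I)^{-1}B\|\leq M$ together with Parseval also gives $\|v(\cdot)\|^{2}_{L_{2}}\leq M^{2}\|\xi(\cdot)\|^{2}_{L_{2}}$. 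Combining these two estimates (by bounding $\|v\|^{2}+\|\xi\|^{2}$ from above by a multiple of $\|\xi\|^{2}$) produces an inequality of the announced form \eqref{EQ: StatHamiltonianFreqCondCoerciveFunc}; the precise numerical constant $\delta/(\delta M^{2}+1)$ then follows from elementary arithmetic and is not essential for what comes next.

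For the nonoscillation, coercivity of $\mathcal{J}_{\mathcal{F}}$ on the closed linear subspace $\mathfrak{M}_{0}$ (viewed inside $\mathcal{Z}$ with the norm \eqref{EQ: StatHamiltSpaceOfProcessesNorm}) means that the associated symmetric bilinear form $\mathcal{B}$ satisfies $\mathcal{B}(x,x)\geq c\|x\|^{2}_{\mathcal{Z}}$ for some $c>0$. Any critical point $x_{0}\in\mathfrak{M}_{0}$ of $\mathcal{J}_{\mathcal{F}}$ on the linear space $\mathfrak{M}_{0}$ satisfies $\mathcal{B}(x_{0},h)=0$ for every $h\in\mathfrak{M}_{0}$; taking $h=x_{0}$ forces $x_{0}=0$. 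Hence $0$ is the unique critical point of $\mathcal{J}_{\mathcal{F}}$ on $\mathfrak{M}_{0}$, and Proposition \ref{PROP: NonoscillationUniqueFixedPoint} delivers the nonoscillation condition \eqref{EQ: StatCaseNonoscillation} with some $P=P^{*}\in\mathcal{L}(\mathbb{H})$.

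The principal technical obstacle is the justification that the zero-extension $\tilde v$ is genuinely the Lyapunov--Perron $L_{2}$-solution associated with $A$ on $\mathbb{R}$, which is what legitimizes the passage to the Fourier side via Lemma \ref{LEM: FourierSolutionOnRLemma}; once that identification is in hand, everything reduces to a direct application of Parseval, the frequency inequality, and the earlier characterization of nonoscillation in terms of critical points.
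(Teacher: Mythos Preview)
Your proposal is correct and follows essentially the same route as the paper's proof: extend by zero, invoke Lemma~\ref{LEM: FourierSolutionOnRLemma} to pass to the Fourier side, apply Parseval together with the frequency inequality \eqref{EQ: FreqConditionStationaryGeneral} to obtain coercivity on $\mathfrak{M}_{0}$, and then conclude nonoscillation via Proposition~\ref{PROP: NonoscillationUniqueFixedPoint}. Your additional care in explicitly identifying $\tilde v$ with $\mathfrak{L}_{A}(B\tilde\xi)$ through the uniqueness in Theorem~\ref{TH: LyapunovPerronOperatorStationaryTheorem}, and in spelling out why coercivity forces the zero critical point to be unique, fills in details the paper leaves implicit.
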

\begin{proof}
	Let $\widehat{v}(\cdot)$ and $\widehat{\xi}(\cdot)$ be the Fourier transforms of $v(\cdot)$ and $\xi(\cdot)$ respectively after extending the functions by zero to the negative semi-axis. Then the Parseval identity, Lemma \ref{LEM: FourierSolutionOnRLemma} and \eqref{EQ: FreqConditionStationaryGeneral} give 
	\begin{equation}
		\begin{split}
			\mathcal{J}_{\mathcal{F}}(v(\cdot),\xi(\cdot)) = \int_{-\infty}^{+\infty} \mathcal{F}(\widehat{v}(\omega), \widehat{\xi}(\omega))d\omega \geq \delta \int_{-\infty}^{+\infty}|\widehat{\xi}(\omega)|^{2}_{\mathbb{U}^{\mathbb{C}}}d\omega \geq \\ \geq
			\frac{\delta}{\delta M^{2} + 1}\int_{-\infty}^{+\infty}\left(|\widehat{v}(\omega)|^{2}_{\mathbb{H}^{\mathbb{C}}} + |\widehat{\xi}(\omega)|^{2}_{\mathbb{U}^{\mathbb{C}}}\right)d\omega
		\end{split}
	\end{equation}
	and, consequently, \eqref{EQ: StatHamiltonianFreqCondCoerciveFunc}. 
	
	For the second part, \eqref{EQ: StatHamiltonianFreqCondCoerciveFunc} and Proposition \ref{PROP: NonoscillationUniqueFixedPoint} gives the desired. The proof is finished.
\end{proof}

For applications to stability problems or to the existence of inertial manifolds, the inequality \eqref{EQ: LyapInequalityStatProblem} and its analogs for nonstationary cases are crucial. We recall that the setting of a bounded quadratic form $\mathcal{F}$ is considered here for simplicity and many problems such as semilinear or quasi-linear parabolic equations or equations with discrete delays demand considerations of unbounded quadratic forms $\mathcal{F}$. Such studies are possible due to certain regularity or, more generally, structural properties of the corresponding control systems. 

Note that the most general context for stationary infinite-horizon quadratic optimization is presented in our work \cite{Anikushin2020FreqDelay}. However, the formalism required to study the ``Hamiltonian side'' in such a general context still requires developments. Although it seems that semilinear parabolic equations \cite{Anikushin2020FreqParab} can be studied with no problems, some important nuances arise in the study of delay equations. One of arising features is due to the taking adjoints of certain elements of $\mathcal{F}$ (explicitly or implicitly as in $F_{1}$ above \eqref{EQ: StatCaseSmithFreqCond}), which in the context of \cite{Anikushin2020FreqDelay} can be defined only on a Banach space, as it is required to derive the Hamiltonian $H$ in \eqref{EQ: HamiltonianStationary}. Another feature is that the action of $\mathcal{F}$ can be understood only in the integral sense due to structural properties of solutions which we call structural Cauchy formulas \cite{Anikushin2020FreqDelay,Anikushin2023Comp}. 

Such studies on stationary optimization have already led to constructing inertial manifolds (see our papers \cite{Anikushin2020FreqDelay,Anikushin2022Semigroups,Anikushin2020Geom,AnikushinAADyn2021}; and \cite{AnikushinRom2023SS} joint with A.O.~Romanov) and obtaining effective global stability criteria (see our work \cite{Anikushin2023Comp}; \cite{AnikushinRomanov2023FreqConds} joint with A.O.~Romanov) for nonlinear delay equations. We hope that delay equations can be also a motivating example in exploring the Hamiltonian side of the problem, especially for nonstationary optimization.
\section{Nonoscillation under the Spatial Averaging}

\label{SEC: SpatAvgNonstatOpt}

In this section, we are going to discuss relations between nonstationary quadratic optimization and the Spatial Averaging Principle. This method was introduced by J.~Mallet-Paret and G.R.~Sell \cite{MalletParetSell1988SA} to construct inertial manifolds for scalar 2D and 3D reaction-diffusion equations, where the Spectral Gap Condition does not hold. In \cite{MalletParetSell1988SA}, the key observation (which justifies the name) is that the Schr\"{o}dinger operator $\Delta + v(x)$ (arising from the linearization of scalar reaction-diffusion equations) can sometimes be nicely approximated on large spectral subspaces of the Laplacian by the operator $\Delta + \overline{v}$, where $\overline{v}$ is the mean value of $v$. This is caused by delicate number-theoretic properties of eigenvalues. 

Recently, the method was developed by S.~Zelik and A.~Kostianko to extend possible applications. This allowed them to study some types of Cahn-Hilliard equations, modifications of Navier-Stokes equations and the complex Ginzburg-Landau equation (see the survey of A.~Kostianko et al. \cite{KostiankoZelikSA2020}).

In \cite{Anikushin2020FreqParab} we posed the problem of establishing connections between the Spatial Averaging Principle and nonstationary quadratic optimization. Below we will show  the existence of nonoscillating stable Lagrange bundles for nonautonomous Hamiltonian systems related to the Spatial Averaging Principle, if conditions of the latter are satisfied.

Suppose $A_{0}$ is a positive-definite self-adjoint operator in a real Hilbert space $\mathbb{H}$ having compact resolvent. Let $0 < \lambda_{1} \leq \lambda_{2} \leq \ldots$ be the eigenvalues of $A_{0}$ and let $e_{1}, e_{2}, \ldots$ be the associated orthonormal basis of eigenvectors, i.e. $A_{0} e_{j} = \lambda_{j} e_{j}$ for any $j = 1,2,\ldots$.

Let $\langle \cdot, \cdot \rangle_{\mathbb{H}}$ be the inner product in $\mathbb{H}$. For any positive integers $k$ and $N$ we introduce the orthogonal spectral projectors given by
\begin{equation}
	\label{EQ: SpatAvgProjectorsDef}
	\begin{split}
		\mathcal{P}_{k,N}v &:=\sum_{j \colon \lambda_{j} < \lambda_{N} - k} \langle v,e_{j}\rangle_{\mathbb{H}}	e_{j}, \\ \mathcal{Q}_{k,N}v &:= \sum_{j \colon \lambda_{j} > \lambda_{N} + k} \langle v,e_{j} \rangle_{\mathbb{H}}	e_{j}, \\ \mathcal{I}_{k,N} &:= \operatorname{Id}_{\mathbb{H}} -  \mathcal{P}_{k,N} - \mathcal{Q}_{k,N}
	\end{split}
\end{equation}
for any $v \in \mathbb{H}$. Here $\mathcal{P}_{k,N}$, $\mathcal{Q}_{k,N}$ and $\mathcal{I}_{k,N}$ are called the projectors on \textit{essentially lower}, \textit{essentially higher} and \textit{intermediate} modes respectively.

Let us take $\mathbb{U} := \mathbb{H} \times \mathbb{H}$ as the control space. For any $\xi \in \mathbb{U}$ we write $\xi=(\xi_{I}, \xi_{c})$, where $\xi_{I}, \xi_{c} \in \mathbb{H}$. 

Let $(\mathcal{Q},\vartheta)$ be a semiflow on a complete metric space $\mathcal{Q}$ and consider a continuous function $a(\cdot) \colon \mathcal{Q} \to \mathbb{R}$ such that
\begin{equation}
	\label{EQ: SpatAvgFunctionABound}
	a_{b}:=\sup_{q \in \mathcal{Q}}|a(q)| \leq \Lambda + \delta,
\end{equation}
where $\Lambda$ and $\delta$ are two positive constants (see the following remark).

\begin{remark}
	In the context of the Spatial Averaging Principle, we deal with the nonautonomous system over $(\mathcal{Q},\vartheta)$ given by
	\begin{equation}
		\label{EQ: ExampleSpatAveraging}
		\dot{v}(t) = -A_{0}v(t) + L(\vartheta^{t}(q))v(t),
	\end{equation}
	where $L(q) \in \mathcal{L}(\mathbb{H})$ is a family of linear operators with the norm not exceeding $\Lambda$. Then conditions of the method particularly require for some $k$ and $N$ that
	\begin{equation}
		\label{EQ: SpatAvgMainCondition}
		\| \mathcal{I}_{k,N}L(q)\mathcal{I}_{k,N} - a(q) \mathcal{I}_{k,N} \|_{\mathcal{L}(\mathbb{H})} \leq \delta \text{ for any } q \in \mathcal{Q},
	\end{equation}
	that is $L(q)$ is approximated on the intermediate modes by the scalar operator $a(\cdot)$. Clearly, we have \eqref{EQ: SpatAvgFunctionABound} satisfied.
	
	To understand the forthcoming constructions, it is also convenient to consider \eqref{EQ: ExampleSpatAveraging} as the control system 
	\begin{equation}
		\label{EQ: SpatAvgControlSysExample}
		\dot{v}(t) = -A_{0}v(t) + \xi_{I}(t) + \xi_{c}(t)
	\end{equation}
	closed by the feedback 
	\begin{equation}
	\label{EQ: SpatAvgClosedFeedbackExample}
	\xi_{I}(t) = L(\vartheta^{t}(q))\mathcal{I}_{k,N}v(t) \text{ and } \xi_{c}(t) = L(\vartheta^{t}(q))(\mathcal{P}_{k,N}+\mathcal{Q}_{k,N})v(t).
	\end{equation}
	\qed
\end{remark}

For each $q \in \mathcal{Q}$, consider the quadratic forms of $v \in \mathbb{H}$ and $\xi \in \mathbb{U}$ given by
\begin{equation}
	\label{EQ: SpatAvrgQuadraticForms}
	\begin{split}
		\mathcal{F}_{1,q}(v,\xi) &:= |\mathcal{I}_{k,N}\xi_{I} - a(q) \mathcal{I}_{k,N}v |^{2}_{\mathbb{H}} - \delta^{2} |\mathcal{I}_{k,N}v|^{2}_{\mathbb{H}},\\
		\mathcal{F}_{2}(v,\xi) &:= |(\mathcal{P}_{k,N}+\mathcal{Q}_{k,N})\xi_{I}|^{2}_{\mathbb{H}} - \Lambda^{2} |\mathcal{I}_{k,N}v|^{2}_{\mathbb{H}},\\
		\mathcal{F}_{3}(v,\xi) &:= |\xi_{c}|^{2}_{\mathbb{H}} - \Lambda^{2} |(\mathcal{P}_{k,N}+\mathcal{Q}_{k,N})v|^{2}_{\mathbb{H}}\\
	\end{split}
\end{equation}
and sum them together with some constant coefficients $\tau_{1},\tau_{2},\tau_{3} > 0$ (to be determined) as
\begin{equation}
	\label{EQ: QuadraticFormSpatialAveraging}
	\mathcal{F}_{q}(v,\xi) := \tau_{1} \mathcal{F}_{1,q}(v,\xi) + \tau_{2} \mathcal{F}_{2}(v,\xi) + \tau_{3} \mathcal{F}_{3}(v,\xi).
\end{equation}

\begin{remark}
	The introduced quadratic forms define quadratic constraints for \eqref{EQ: SpatAvgControlSysExample} by the inequalities $\mathcal{F}_{1,q}(v,\xi) \leq 0$, $\mathcal{F}_{2}(v,\xi) \leq 0$ and $\mathcal{F}_{3}(v,\xi) \leq 0$. It is not hard to see that the constraints are satisfied if the closed feedback \eqref{EQ: SpatAvgClosedFeedbackExample} is applied and \eqref{EQ: SpatAvgMainCondition} holds. Here \eqref{EQ: QuadraticFormSpatialAveraging} represents a transition (known as the S-procedure; see \cite{Gelig1978}) from the triple constraint to a single constraint. It allows to reduce (usually, with a loss of information in the case of several constraints) the nonconvex problem to a convex one, which we are aimed to study.
\end{remark}

Clearly, we have $\mathcal{F}_{q}(v,\xi) = (F_{1}(q)v,v)_{\mathbb{H}} + 2(F_{2}(q)v,\xi)_{\mathbb{H}} + (F_{3}(q)\xi,\xi)_{\mathbb{U}}$, where the operators $F_{1}=F^{*}_{1} \in \mathcal{L}(\mathbb{H})$, $F_{2} \in \mathcal{L}(\mathbb{H};\mathbb{U})$ and $F_{3} = F^{*}_{3} \in \mathcal{L}(\mathbb{U})$ are given by
\begin{equation}
	\label{EQ: SpatAvegQuadraticFormCoeff}
	\begin{split}
		F_{1}(q)v &:=  \tau_{1}|a(q)|^{2}\mathcal{I}_{k,N}v - \tau_{1}\delta^{2} \mathcal{I}_{k,N}v - \tau_{2}\Lambda^{2}\mathcal{I}_{k,N}v - \tau_{3}\Lambda^{2}(\mathcal{P}_{k,N} + \mathcal{Q}_{k,N})v,\\
		F_{2}(q)v &:= (-\tau_{1}a(q) \mathcal{I}_{k,N}v,0_{\mathbb{H}}),\\
		F_{3}(q)\xi = F_{3}\xi &:= (\tau_{1} \mathcal{I}_{k,N}\xi_{I} + \tau_{2}(\mathcal{P}_{k,N} + \mathcal{Q}_{k,N})\xi_{I}, \tau_{3} \xi_{c})
	\end{split}
\end{equation}
for $v \in \mathbb{H}$ and $\xi \in \mathbb{U}$.

We define the control operator $B \colon \mathbb{U} \to \mathbb{H}$ as $B\xi := \xi_{I} + \xi_{c}$. Moreover, for a given $N$, let us put $\nu_{0}(q) := \alpha - a(q)$ for $\alpha := (\lambda_{N}+\lambda_{N+1})/2$ and consider the operator $A(q) := -A_{0} + \nu_{0}(q)I$.

We are going to study the nonautonomous Hamiltonian system over $(\mathcal{Q},\vartheta)$ associated with $A(q)$, $B$ and $\mathcal{F}_{q}$ as
\begin{equation}
	\label{EQ: NonStatHamiltonianSystemSpatAv}
	\begin{pmatrix}
		\dot{v}(t)\\
		\dot{\eta}(t)
	\end{pmatrix} = H(\vartheta^{t}(q))
	\begin{pmatrix}
		v(t)\\
		\eta(t)	
	\end{pmatrix},
\end{equation}
where
\begin{equation}
	H(q) := \begin{pmatrix}
		\widehat{A}(q) & B F^{-1}_{3} B^{*}\\
		F_{1}(q) - F^{*}_{2}(q) F^{-1}_{3}F_{2}(q) & -\widehat{A}^{*}(q)
	\end{pmatrix},
\end{equation}
and $\widehat{A}(q) := A(q) - B F^{-1}_{3} F_{2}(q)$.

\subsection{Stable Lagrange bundles}
Let us put $A:= -A_{0} + \alpha I$ (do not confuse with $A(q)$) recalling $\alpha = (\lambda_{N}+\lambda_{N+1})/2$ and rewrite \eqref{EQ: NonStatHamiltonianSystemSpatAv} as
\begin{equation}
	\label{EQ: NonStatHamiltonianSystemSpatAvRewritten}
	\begin{pmatrix}
		\dot{v}(t)\\
		\dot{\eta}(t)
	\end{pmatrix} =
	\begin{pmatrix}
		A v(t)\\
		-A\eta(t)	
	\end{pmatrix} + R(\vartheta^{t}(q))\begin{pmatrix}
	v(t)\\
	\eta(t)	
	\end{pmatrix},
\end{equation}
where
\begin{equation}
	\label{EQ: NonStatHamiltonianSpatAvPerturbationMatrixR}
	R(q) = \begin{pmatrix}
		- B F^{-1}_{3} F_{2}(q) - a(q) I & B F^{-1}_{3} B^{*}\\
		F_{1}(q) - F^{*}_{2}(q) F^{-1}_{3}F_{2}(q) & (B F^{-1}_{3} F_{2}(q))^{*} + a(q) I
	\end{pmatrix}.
\end{equation}

From \eqref{EQ: SpatAvegQuadraticFormCoeff} and the above definition of $B$, it is not hard to see that 
\begin{equation}
	\begin{split}
		-BF^{-1}_{3}F_{2}(q)v &= a(q) \mathcal{I}_{k,N} v,\\ BF^{-1}_{3}B^{*} \eta &= \tau^{-1}_{1}\mathcal{I}_{k,N}\eta + \tau^{-1}_{2}(\mathcal{P}_{k,N} + \mathcal{Q}_{k,N})\eta + \tau^{-1}_{3}\eta,\\
		\left[F_{1}(q) - F^{*}_{2}(q) F^{-1}_{3} F_{2}(q) \right] v &=\\= (\tau_{1}-1)|a(q)|^{2} \mathcal{I}_{k,N}v &- \tau_{1}\delta^{2} \mathcal{I}_{k,N} v - \tau_{2}\Lambda^{2} \mathcal{I}_{k,N} v - \tau_{3} \Lambda^{2}(\mathcal{P}_{k,N} + \mathcal{Q}_{k,N})v, \\ (BF^{-1}_{3}F_{2}(q))^{*}\eta &= -a(q) \mathcal{I}_{k,N} \eta
	\end{split}
\end{equation}
for any $v,\eta \in \mathbb{H}$. For $\overline{\mu}:=(\lambda_{N+1}-\lambda_{N})/2 = -\lambda_{N} + \alpha$, let us put
\begin{equation}
	\label{EQ: SpatAvrgTauChoice}
	\tau_{1} := 1, \ \tau_{2} := \frac{1}{4}\left(\frac{\overline{\mu}}{\Lambda}\right)^{2} \text{ and } \tau_{3} := 1.
\end{equation}
Then \eqref{EQ: NonStatHamiltonianSystemSpatAvRewritten} reads as
\begin{equation}
	\label{EQ: SpatAvNonstatOptConcreteEquation}
	\begin{split}
		\dot{v}(t) &= A v(t) - a(\vartheta^{t}(q))(\mathcal{P}_{k,N} + \mathcal{Q}_{k,N})v(t) + \mathcal{I}_{k,N}\eta(t) + \eta(t) +\\&+ 4\left(\frac{\Lambda}{\overline{\mu}}\right)^{2}(\mathcal{P}_{k,N} + \mathcal{Q}_{k,N})\eta(t),\\
		\dot{\eta}(t) &= -A \eta(t) + a(\vartheta^{t}(q))(\mathcal{P}_{k,N} + \mathcal{Q}_{k,N})\eta(t) - \left(\delta^{2} + \frac{\overline{\mu}^{2}}{4}\right) \mathcal{I}_{k,N} v(t) -\\&- \Lambda^{2}(\mathcal{P}_{k,N} + \mathcal{Q}_{k,N})v(t).
	\end{split}
\end{equation}

Clearly, the $C_{0}$-semigroup generated by $A$ admits exponential dichotomy of rank $N$ such that the unstable subspace $\mathbb{H}^{u}_{A}$ is spanned by $e_{1},\ldots,e_{N}$ and the stable space $\mathbb{H}^{s}_{A}$ is spanned by the remained basis vectors $e_{N+1},e_{N+2},\ldots$. Since $A$ is self-adjoint, for $A^{*}=A$ we have the same subspaces. Then for the operator $\breve{A}$ the corresponding subspaces $\breve{\mathbb{H}}^{s}$ and $\breve{\mathbb{H}}^{u}$ are given by \eqref{EQ: StatDualCouplingStableUnstableSubspaces}.

Note that for $A$ (resp. $-A$), the corresponding semigroups satisfy \eqref{EQ: ExponentialDecayStationaryEstimate} with $M=1$ and $\varepsilon$ being the distance from $0$ to the spectrum of $A$ (resp. $-A$), i.e. $\overline{\mu} = - \lambda_{N} + \alpha = \lambda_{N+1} - \alpha$. Moreover, the same holds for the projected operator $A\mathcal{I}_{k,N}$ (resp. $-A\mathcal{I}_{k,N}$). As for $A (\mathcal{P}_{k,N} + \mathcal{Q}_{k,N})$ (resp. $-A (\mathcal{P}_{k,N} + \mathcal{Q}_{k,N})$), here the distance can estimated from below by $\overline{\mu} + k$ due to \eqref{EQ: SpatAvgProjectorsDef}. In particular, Theorem \ref{TH: LyapunovPerronOperatorStationaryTheorem} gives
\begin{equation}
	\label{EQ: LyapPerronOpsEstimateSelfAdj}
	\begin{split}
		\|\mathcal{I}_{k,N}\mathfrak{L}_{A}\| = \| \mathfrak{L}_{A} \| = \|\mathfrak{L}_{-A} \| \leq \frac{1}{\overline{\mu}},\\
		\|(\mathcal{P}_{k,N}+\mathcal{Q}_{k,N})\mathfrak{L}_{-A}\| = \|(\mathcal{P}_{k,N}+\mathcal{Q}_{k,N})\mathfrak{L}_{A}\| \leq \frac{1}{\overline{\mu}+k},
	\end{split}
\end{equation}
where $\| \cdot \|$ denotes the operator norm induced from $L_{2}(\mathbb{R};\mathbb{H})$.

For $q \in \mathcal{Q}$, let $\mathcal{M}^{q}_{a}$ be the operator in $L_{2}(0,\infty;\mathbb{H})$ given by
\begin{equation}
	(\mathcal{M}^{q}_{a}f)(t):= a(\vartheta^{t}(q))f(t) \text{ for } t \geq 0.
\end{equation}
for any $f \in L_{2}(0,\infty;\mathbb{H})$. We have the following lemma. Recall $\overline{\mu}:=(\lambda_{N+1}-\lambda_{N})/2 = -\lambda_{N} + \alpha$.
\begin{lemma}
	\label{LEM: SpatAvHamiltonSolvabilityVEquation}
	Suppose that $a_{b} := \sup_{q \in \mathcal{Q}} |a(q)| < \overline{\mu} + k$. Then for any $f(\cdot) \in L_{2}(0,\infty;\mathbb{H})$ and $q \in \mathcal{Q}$ there exists a unique mild solution $v(\cdot) \in L_{2}(0,\infty;\mathbb{H})$ on $[0,+\infty)$ to
	\begin{equation}
		\dot{v}(t) = Av(t) - a(\vartheta^{t}(q))(\mathcal{P}_{k,N} + \mathcal{Q}_{k,N})v(t) + f(t)
	\end{equation}
	such that $v(0)=v_{0}$. It satisfies the equation
	\begin{equation}
		\label{EQ: SpatAvEquationForV}
		v = \mathcal{R} \mathfrak{L}_{A} \mathcal{P} \left[ - (\mathcal{P}_{k,N} + \mathcal{Q}_{k,N})\mathcal{M}^{q}_{a}v + f \right]
	\end{equation}
	and admits the estimates
	\begin{equation}
		\label{EQ: SpatAvEquationForVEstimates}
		\begin{split}
			\| \mathcal{I}_{k,N} v(\cdot) \|_{L_{2}} &\leq \frac{1}{\overline{\mu}} \cdot \|\mathcal{I}_{k,N}f(\cdot) \|_{L_{2}},\\
			\| (\mathcal{P}_{k,N} + \mathcal{Q}_{k,N}) v(\cdot) \|_{L_{2}} &\leq \frac{1}{\overline{\mu} + k - a_{b}} \| (\mathcal{P}_{k,N}+ \mathcal{Q}_{k,N})f(\cdot) \|_{L_{2}},\\
		\end{split}
	\end{equation}
	where $L_{2}$ stands for $L_{2}(0,\infty;\mathbb{H})$. Moreover, analogous statement holds for the problem
	\begin{equation}
		\dot{\eta}(t) = -A\eta(t) + a(\vartheta^{t}(q))(\mathcal{P}_{k,N} + \mathcal{Q}_{k,N})\eta(t) + f(t),
	\end{equation}
	where the mild solution $\eta(\cdot) \in L_{2}(0,\infty;\mathbb{H})$ satisfies
	\begin{equation}
		\eta = \mathcal{R} \mathfrak{L}_{-A} \mathcal{P} \left[ (\mathcal{P}_{k,N} + \mathcal{Q}_{k,N}) \mathcal{M}^{q}_{a}\eta + f  \right].
	\end{equation}
\end{lemma}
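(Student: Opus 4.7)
The plan is to reduce the question to a fixed-point problem on $L_{2}(0,\infty;\mathbb{H})$ built from the Lyapunov–Perron operator of Theorem \ref{TH: LyapunovPerronOperatorStationaryTheorem}. The crucial structural observation is that, since $a(\vartheta^{t}(q))$ is scalar-valued, the multiplication operator $\mathcal{M}^{q}_{a}$ commutes with every spectral projector of $A_{0}$; and since $A=-A_{0}+\alpha I$ is diagonal in the eigenbasis $\{e_{j}\}$, the Lyapunov–Perron operator $\mathfrak{L}_{A}$ commutes with $\mathcal{I}_{k,N}$ and with $\mathcal{P}_{k,N}+\mathcal{Q}_{k,N}$. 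In particular, $\mathcal{I}_{k,N}\mathfrak{L}_{A}\mathcal{P}(\mathcal{P}_{k,N}+\mathcal{Q}_{k,N})=0$, so applying $\mathcal{I}_{k,N}$ to the fixed-point equation \eqref{EQ: SpatAvEquationForV} annihilates the perturbation term and yields the closed expression $\mathcal{I}_{k,N}v=\mathcal{R}\mathfrak{L}_{A}\mathcal{P}\mathcal{I}_{k,N}f$; the first inequality in \eqref{EQ: SpatAvEquationForVEstimates} follows directly from \eqref{EQ: LyapPerronOpsEstimateSelfAdj}.

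For the complementary part $w:=(\mathcal{P}_{k,N}+\mathcal{Q}_{k,N})v$, applying $\mathcal{P}_{k,N}+\mathcal{Q}_{k,N}$ to \eqref{EQ: SpatAvEquationForV} and using the commutation relations above gives
\begin{equation*}
w \;=\; -(\mathcal{P}_{k,N}+\mathcal{Q}_{k,N})\mathcal{R}\mathfrak{L}_{A}\mathcal{P}\,\mathcal{M}^{q}_{a}w \;+\; (\mathcal{P}_{k,N}+\mathcal{Q}_{k,N})\mathcal{R}\mathfrak{L}_{A}\mathcal{P}f.
\end{equation*}
The operator $T \colon w \mapsto -(\mathcal{P}_{k,N}+\mathcal{Q}_{k,N})\mathcal{R}\mathfrak{L}_{A}\mathcal{P}\mathcal{M}^{q}_{a}w$ on $L_{2}(0,\infty;\mathbb{H})$ has norm at most $a_{b}/(\overline{\mu}+k)$ by \eqref{EQ: LyapPerronOpsEstimateSelfAdj} and the pointwise bound $|a(\vartheta^{t}(q))|\leq a_{b}$, and this is strictly less than $1$ under our hypothesis. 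Hence $I-T$ is invertible by the Neumann series, producing $w$ uniquely and satisfying
\begin{equation*}
\|w\|_{L_{2}}\;\leq\; \frac{1/(\overline{\mu}+k)}{1-a_{b}/(\overline{\mu}+k)}\,\|(\mathcal{P}_{k,N}+\mathcal{Q}_{k,N})f\|_{L_{2}} \;=\; \frac{1}{\overline{\mu}+k-a_{b}}\,\|(\mathcal{P}_{k,N}+\mathcal{Q}_{k,N})f\|_{L_{2}},
\end{equation*}
which is the second inequality in \eqref{EQ: SpatAvEquationForVEstimates}.

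Combining the two components gives a unique $v\in L_{2}(0,\infty;\mathbb{H})$ satisfying \eqref{EQ: SpatAvEquationForV}; to finish I would verify this $v$ is indeed a mild solution to the stated inhomogeneous equation. For this, I set $g(t):=-a(\vartheta^{t}(q))(\mathcal{P}_{k,N}+\mathcal{Q}_{k,N})v(t)+f(t)\in L_{2}(0,\infty;\mathbb{H})$ and note that \eqref{EQ: SpatAvEquationForV} together with the fact that $\mathcal{R}\mathfrak{L}_{A}\mathcal{P}$ produces mild solutions on $[0,\infty)$ to $\dot u=Au+g$ (this is exactly Theorem \ref{TH: LyapunovPerronOperatorStationaryTheorem} applied to the zero-extension of $g$ to $\mathbb{R}$ and then restricted) yields the desired conclusion. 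Uniqueness of $L_{2}$ mild solutions is automatic since any two would satisfy the homogeneous version of \eqref{EQ: SpatAvEquationForV}, forcing $\mathcal{I}_{k,N}$-component to vanish and the $(\mathcal{P}_{k,N}+\mathcal{Q}_{k,N})$-component to solve $w=Tw$, hence $w=0$.

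The analogous statement for $\eta$ is obtained by repeating the argument with $A$ replaced by $-A$; the kernel estimates \eqref{EQ: LyapPerronOpsEstimateSelfAdj} are symmetric in $A$ and $-A$, and the sign change of the perturbation is absorbed into the fixed-point contraction without affecting its norm. The main subtlety, rather than any estimate itself, is keeping the commutation of $\mathcal{M}^{q}_{a}$, $\mathfrak{L}_{A}$ and the spectral projectors clean so that the intermediate and complementary modes genuinely decouple in the fixed-point equation; once this is transparent, the rest is straightforward Banach fixed-point and Neumann-series bookkeeping.
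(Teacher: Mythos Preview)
Your proof is correct and follows essentially the same approach as the paper: both decompose the fixed-point equation \eqref{EQ: SpatAvEquationForV} via the spectral projectors $\mathcal{I}_{k,N}$ and $\mathcal{P}_{k,N}+\mathcal{Q}_{k,N}$, obtain the intermediate-mode component in closed form from \eqref{EQ: LyapPerronOpsEstimateSelfAdj}, and solve the complementary-mode component by a contraction with norm $a_{b}/(\overline{\mu}+k)<1$. Your write-up is in fact slightly more explicit than the paper's about the commutation relations and the verification that the fixed point is a mild solution, but the argument is the same.
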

\begin{proof}
	Clearly, any $v(\cdot)$ as in the statement must satisfy \eqref{EQ: SpatAvEquationForV} and vice versa. Let us solve \eqref{EQ: SpatAvEquationForV} by applying the projectors $\mathcal{I}_{k,N}$ and $\mathcal{P}_{k,N} + \mathcal{Q}_{k,N}$.
	
	By taking $\mathcal{I}_{k,N}$, we obtain
	\begin{equation}
		\mathcal{I}_{k,N} v = \mathcal{R} \mathfrak{L}_{A} \mathcal{P} \mathcal{I}_{k,N}f.
	\end{equation}
    From \eqref{EQ: LyapPerronOpsEstimateSelfAdj} we get the first estimate from \eqref{EQ: SpatAvEquationForVEstimates}.
	
	By taking $\mathcal{P}_{k,N} + \mathcal{Q}_{k,N}$, we obtain
	\begin{equation}
		(\mathcal{P}_{k,N} + \mathcal{Q}_{k,N}) v = \mathcal{R} \mathfrak{L}_{A} \mathcal{P} (\mathcal{P}_{k,N} + \mathcal{Q}_{k,N})\left[- \mathcal{M}^{q}_{a}  v + f \right].
	\end{equation}
	In virtue of \eqref{EQ: LyapPerronOpsEstimateSelfAdj}, the norm of $\mathcal{R} \mathfrak{L}_{A} \mathcal{P} (\mathcal{P}_{k,N} + \mathcal{Q}_{k,N})\mathcal{M}^{q}_{a}$ in $L_{2}(0,\infty;\mathbb{H})$ can be estimated from above by $a_{b}/(\overline{\mu} + k) < 1$. Consequently, the above equation is solvable for any $f$ and the second estimate is valid again due to \eqref{EQ: LyapPerronOpsEstimateSelfAdj}.
	
	For the second equation we have analogous arguments. The proof is finished.
\end{proof}

It is well-known that one may define the fractional power $A^{\beta}_{0}$ of $A_{0}$ for any $\beta \geq 0$ as follows. Let the domain $\mathbb{H}_{\beta} := \mathcal{D}(A^{\beta}_{0})$ of $A^{\beta}_{0}$ consist of such $v \in \mathbb{H}$ for which the sequence $\{\lambda^{\beta}_{j} \langle v, e_{j} \rangle_{\mathbb{H}} \}_{j \geq 1}$ is square-summable (in particular, $\mathbb{H}_{0} = \mathbb{H}$). Then we put
\begin{equation}
	A^{\beta} v := \sum_{j=1}^{\infty}\lambda^{\beta}_{j} \langle v, e_{j} \rangle_{\mathbb{H}} e_{j}
\end{equation}
for $v \in \mathbb{H}_{\beta}$. Clearly, the space $\mathbb{H}_{\beta}$ is naturally endowed with the inner product $\langle v , w\rangle_{\beta} := \langle A^{\beta}_{0}v, A^{\beta}_{0}w \rangle_{\mathbb{H}}$, where $v, w \in \mathbb{H}_{\beta}$, which makes it a Hilbert space. It is not hard to see that $A_{0}$ can be considered as a self-adjoint operator in $\mathbb{H}_{\beta}$ with the domain $\mathbb{H}_{1+\beta}$. It will be useful to apply the above and some of further constructions in this context also. 

For example, Lemma \ref{LEM: SpatAvHamiltonSolvabilityVEquation} is directly applicable for $\mathbb{H}$ exchanged with $\mathbb{H}_{\beta}$. Moreover, the operator $A$ in $\breve{\mathbb{H}} = \mathbb{H} \times \mathbb{H}$ can be also considered as an operator in $\breve{\mathbb{H}}_{\beta} := \mathbb{H}_{\beta} \times \mathbb{H}_{\beta}$ and it admits exponential dichotomy with a stable space $\breve{\mathbb{H}}^{s}_{\beta}$ and an unstable space $\breve{\mathbb{H}}^{u}_{\beta}$. Clearly, $\breve{\mathbb{H}}^{s}_{\beta_{1}} \subset \breve{\mathbb{H}}^{s}_{\beta_{2}}$ and $\breve{\mathbb{H}}^{u}_{\beta_{1}} \subset \breve{\mathbb{H}}^{u}_{\beta_{2}}$ for $\beta_{1} \geq \beta_{2} \geq 0$.

Recall the complex structure $J$ in $\mathbb{H}_{\beta}$ given by $J(v,\eta) := (-\eta,v)$ for any $(v,\eta) \in \breve{\mathbb{H}}_{\beta}$. It is important that the Hamiltonian $H(q)$ is well-defined\footnote{For more general problems, similar properties require additional study.} as an operator from $\breve{\mathbb{H}}_{1+\beta}$ to $\breve{\mathbb{H}}_{\beta}$ due to \eqref{EQ: SpatAvNonstatOptConcreteEquation}. Clearly,
\begin{equation}
	\label{EQ: SymplecticIdentitySpatAvrgScale}
	J H(q) + H^{*}(q)J = 0 \text{ on } \mathbb{H}_{1+\beta} \times \mathbb{H}_{1+\beta}.
\end{equation}

For a given $q \in \mathcal{Q}$, let $\mathbb{L}^{+}_{b}(q)$ be the subspace of all $z_{0} \in \breve{\mathbb{H}}$ such that there exists a solution $z(t) = z(t;q,z_{0})$ to \eqref{EQ: NonStatHamiltonianSystemSpatAv} on $[0,\infty)$ with $z(0) = z_{0}$. Analogously, for a given real $\varepsilon$ we define $\mathbb{L}^{+}_{\varepsilon}(q)$ as the subspace of all $z_{0} \in \breve{\mathbb{H}}$ such that $e^{\varepsilon t} z(t;q,z_{0})$ belongs to $L_{2}(0,\infty;\breve{\mathbb{H}})$. Moreover, the same definitions can be applied by exchanging the space $\mathbb{H}$ with $\mathbb{H}_{\beta}$ for any $\beta \geq 0$. This leads to the subspaces $\mathbb{L}^{+}_{b,\beta}(q)$ and $\mathbb{L}^{+}_{\varepsilon,\beta}(q)$ in $\breve{\mathbb{H}}_{\beta}$.

Now we are ready to show the existence of stable Lagrangian bundles for \eqref{EQ: NonStatHamiltonianSystemSpatAv} under conditions of the Spatial Averaging Principle (see Remark \ref{REM: ImprovedSpatAvrgConds}).
\begin{theorem}
	\label{TH: SpatAvrgLagrangeBundle}
	Suppose there exist positive integers $k$ and $N$ and positive reals $\Lambda$ and $\delta$ such that (recall $\overline{\mu} = (\lambda_{N+1} - \lambda_{N}) / 2$)
	\begin{equation}
		\label{EQ: SpatAvgLagrangeBundlesInequaltiies}
		\frac{\overline{\mu}}{\sqrt{5}} - \delta \geq 0 \text{ and } k^{2} - 2\Lambda k - 4 \frac{\Lambda^{4}}{\overline{\mu}^{2}} \geq 0.
	\end{equation}
	Then for the Hamiltonian system \eqref{EQ: NonStatHamiltonianSystemSpatAv} satisfying \eqref{EQ: SpatAvrgTauChoice} and \eqref{EQ: SpatAvgFunctionABound}, $\mathbb{L}^{+}_{b,\beta}(q)$ is a Lagrange subspace in $\breve{\mathbb{H}}_{\beta}$ which coincides with the subspace $\mathbb{L}^{+}_{\varepsilon,\beta}(q)$ for any $q \in \mathcal{Q}$ and $|\varepsilon| \leq \varepsilon_{0}$ with some $\varepsilon_{0}>0$ independent of $q$ and $\beta \geq 0$. Moreover, for any $0 < \varepsilon \leq \varepsilon_{0}$ there exists a constant $M_{\varepsilon} > 0$ such that the estimate
	\begin{equation}
		\label{EQ: SpatAveragUniformExpDecay}
		|z(t;q,z_{0})|_{\breve{\mathbb{H}}_{\beta}} \leq M_{\varepsilon} e^{-\varepsilon t} |z_{0}|_{\breve{\mathbb{H}}_{\beta}}
	\end{equation}
	holds for any $q \in \mathcal{Q}$, $\beta \geq 0$, $z_{0} \in \mathbb{L}^{+}_{b,\beta}(q)$ and the corresponding solution $z(t;q,z_{0})$ with $z(0;q,z_{0}) = z_{0}$.
	
	In addition, $\mathbb{L}^{+}_{b,\beta}(q)$ is given by the graph of a bounded linear operator $M^{+}_{\beta}(q) \colon \breve{\mathbb{H}}^{s}_{\beta} \to \breve{\mathbb{H}}^{u}_{\beta}$ as
	\begin{equation}
		\label{EQ: SpatAvgGraphFormula}
		\mathbb{L}^{+}_{b,\beta}(q) = \{ z^{s} +  M^{+}_{\beta}(q)z^{s} \ | \ z^{s} \in \breve{\mathbb{H}}^{s}\},
	\end{equation}
	where the norms of $M^{+}_{\beta}(q)$ are bounded uniformly in $q \in \mathcal{Q}$ and $\beta \geq 0$ and the mapping $\mathcal{Q} \ni q \mapsto M^{+}_{\beta}(q)$ is continuous in the operator norm for any $\beta \geq 0$.
\end{theorem}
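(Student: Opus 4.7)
The plan is to mirror the Lyapunov--Perron scheme of Theorem~\ref{TH: DichotomyStatOptimizationLP}, exploiting a decisive structural simplification: the projectors $\mathcal{I}_{k,N}$ and $\mathcal{P}_{k,N}+\mathcal{Q}_{k,N}$ commute with $A$ and decouple \eqref{EQ: SpatAvNonstatOptConcreteEquation} into two independent subsystems. Writing $v=v_{I}+v_{c}$ and $\eta=\eta_{I}+\eta_{c}$, the intermediate component obeys the autonomous Hamiltonian $\dot{v}_{I}=Av_{I}+2\eta_{I}$, $\dot{\eta}_{I}=-A\eta_{I}-(\delta^{2}+\overline{\mu}^{2}/4)v_{I}$ (the $a(\vartheta^{t}(q))$-factor is annihilated by $\mathcal{I}_{k,N}$), while the complementary component obeys $\dot{v}_{c}=(A-a(\vartheta^{t}(q)))v_{c}+(1+4(\Lambda/\overline{\mu})^{2})\eta_{c}$, $\dot{\eta}_{c}=-(A-a(\vartheta^{t}(q)))\eta_{c}-\Lambda^{2}v_{c}$, where the nonautonomous part enters only as a scalar multiplier of the identity.

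For each $z^{s}\in \breve{\mathbb{H}}^{s}_{\beta}$, following the substitution $\Delta z=z-G_{\sharp}(\cdot)z^{s}$ and extension to $\mathbb{R}$ as in the proof of Theorem~\ref{TH: DichotomyStatOptimizationLP}, I would reduce the construction of a bounded solution through $z_{0}$ with $\Pi^{s}_{\breve{A}}z_{0}=z^{s}$ to a fixed-point equation in $L_{2}(0,\infty;\breve{\mathbb{H}}_{\beta})$. On intermediate modes this is an autonomous Lyapunov--Perron equation, whose invertibility is governed by the first inequality $\overline{\mu}/\sqrt{5}-\delta\geq 0$; this inequality plays the role of the frequency condition \eqref{EQ: FreqConditionStationaryGeneral} for the associated bounded quadratic form and supplies the needed coercivity with margin. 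On complementary modes Lemma~\ref{LEM: SpatAvHamiltonSolvabilityVEquation} inverts $A-a(\vartheta^{t}(q))(\mathcal{P}_{k,N}+\mathcal{Q}_{k,N})$ and its adjoint with operator norm at most $(\overline{\mu}+k-a_{b})^{-1}$, and a round-trip contraction for the remaining $\Lambda^{2}$- and $4(\Lambda/\overline{\mu})^{2}$-couplings between $v_{c}$ and $\eta_{c}$ yields the fixed point exactly under the second inequality $k^{2}-2\Lambda k-4\Lambda^{4}/\overline{\mu}^{2}\geq 0$ combined with $a_{b}\leq \Lambda+\delta$. Assembling the two pieces defines $M^{+}_{\beta}(q)z^{s}:=\Delta z(0)\in \breve{\mathbb{H}}^{u}_{\beta}$ and the graph formula \eqref{EQ: SpatAvgGraphFormula}. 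Since every operator involved is a spectral multiplier of $A_{0}$, all the bounds are independent of $\beta\geq 0$ and uniform in $q\in \mathcal{Q}$; the small-shift trick $\breve{A}\leadsto \breve{A}+\varepsilon I$ carries over verbatim from the stationary proof to furnish $\varepsilon_{0}>0$, the identifications $\mathbb{L}^{+}_{b,\beta}(q)=\mathbb{L}^{+}_{\varepsilon,\beta}(q)$ and the exponential estimate \eqref{EQ: SpatAveragUniformExpDecay}.

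To promote $\mathbb{L}^{+}_{b,\beta}(q)$ to a Lagrange subspace I would invoke Theorem~\ref{TH: LagrangeSubspaceCriterion} with $\mathbb{H}_{\sharp}=\breve{\mathbb{H}}^{s}_{\beta}$ and $\mathbb{H}_{\flat}=\breve{\mathbb{H}}^{u}_{\beta}$, both Lagrange by Proposition~\ref{PROP: StatDichSubspacesAreLagrange}; the graph \eqref{EQ: SpatAvgGraphFormula} supplies bijectivity of $\Pi^{s}_{\breve{A}}$, and isotropy is verified by applying the symplectic identity \eqref{EQ: SymplecticIdentitySpatAvrgScale} along classical solutions and passing to the limit $t\to+\infty$ using \eqref{EQ: SpatAveragUniformExpDecay}. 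Continuity of $q\mapsto M^{+}_{\beta}(q)$ reduces to continuous dependence of the fixed point on $q$: the only $q$-dependent ingredient is the multiplication $\mathcal{M}^{q}_{a}$, strongly continuous in $q$ by continuity of $a(\cdot)$ and of the semiflow $\vartheta$, and Proposition~\ref{PROP: LagrangeSubspacesContinuousDependence} then translates this into continuity in $\Lambda(\mathbb{H}_{\beta})$.

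The hardest step I expect is the symplectic conservation needed for isotropy. In the stationary case of Theorem~\ref{TH: DichotomyStatOptimizationLP} this came for free from $H$ restricted to $\mathbb{L}^{+}_{b}$ being a $C_{0}$-semigroup generator, but under the nonautonomous flow here one must instead combine approximation of initial data in $\mathcal{D}(A)\times\mathcal{D}(A^{*})$ with the boundedness and pointwise regularity of the $a(\vartheta^{t}(q))$-perturbation to obtain classical solutions, verify \eqref{EQ: SymplecticIdentitySpatAvrgScale} pointwise in $t$, and finally invoke continuous dependence on $z_{0}$ through the bounded graph operator $M^{+}_{\beta}(q)$ to extend the symplectic conservation to all mild solutions lying on the bundle.
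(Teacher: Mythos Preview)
Your proposal follows the same Lyapunov--Perron scheme as the paper's proof, and the overall architecture is correct. There is, however, one genuine difference in how the fixed-point problem is resolved, and one point where your isotropy argument needs a sharper mechanism.

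\textbf{On the fixed point.} You propose to split the system into the autonomous intermediate part and the nonautonomous complementary part, handling the former via the stationary frequency-condition machinery and the latter via a contraction. The paper instead treats both parts by a single contraction argument: it defines one operator $\mathcal{T}$ in $L_{2}(0,\infty;\mathbb{H})$ by first solving the $v$-equation for given $\eta$ and then the $\eta$-equation (both via Lemma~\ref{LEM: SpatAvHamiltonSolvabilityVEquation}), and shows $\|\mathcal{I}_{k,N}\mathcal{T}\|<1$ and $\|(\mathcal{P}_{k,N}+\mathcal{Q}_{k,N})\mathcal{T}\|<1$ separately from the norm bounds \eqref{EQ: LyapPerronOpsEstimateSelfAdj} and \eqref{EQ: SpatAvEquationForVEstimates}. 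In particular, the first inequality in \eqref{EQ: SpatAvgLagrangeBundlesInequaltiies} is used not as a frequency condition but simply to force $\frac{1}{2}+2\delta^{2}/\overline{\mu}^{2}<1$. Your route would also work (the intermediate Hamiltonian is indeed autonomous and its $2\times2$ blocks are hyperbolic whenever $\delta<\overline{\mu}/2$, which is implied by $\delta\leq\overline{\mu}/\sqrt{5}$), but it is more elaborate than needed and the phrase ``plays the role of the frequency condition'' is slightly misleading: no Fourier step is required here.

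\textbf{On isotropy.} Your plan to approximate initial data in $\mathcal{D}(A)\times\mathcal{D}(A^{*})$ has a gap: such data need not lie on $\mathbb{L}^{+}_{b}(q)$, so you cannot directly run the symplectic-conservation argument there. The paper closes this by using the $\beta$-scale in an essential way: since the graph operators satisfy $M^{+}_{\beta_{1}}(q)=M^{+}_{\beta_{2}}(q)$ on $\breve{\mathbb{H}}^{s}_{\beta_{1}}$ for $\beta_{1}\geq\beta_{2}$, the subspace $\mathbb{L}^{+}_{b,\gamma}(q)$ is dense in $\mathbb{L}^{+}_{b,\beta}(q)$ for every $\gamma>\beta$. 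For $z_{0}\in\mathbb{L}^{+}_{b,\gamma}(q)$ with $\gamma>0$, parabolic smoothing yields $z(\cdot)\in C([s,\infty);\breve{\mathbb{H}}_{1+\beta})\cap C^{1}([s,\infty);\breve{\mathbb{H}}_{\beta})$ for $s>0$ and $\beta<\gamma$, so \eqref{EQ: SymplecticIdentitySpatAvrgScale} can be applied pointwise and then extended by density and continuity. This is the correct replacement for the $C_{0}$-semigroup trick from the stationary case, and it is what your last paragraph is reaching for.
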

\begin{proof}
	We will firstly show \eqref{EQ: SpatAveragUniformExpDecay} and \eqref{EQ: SpatAvgGraphFormula} for $\beta = 0$.
	
	Let us fix $q \in \mathcal{Q}$ and involve Lyapunov-Perron operators into the problem as in the first part of the proof of Theorem \ref{TH: DichotomyStatOptimizationLP}. Namely, take $z^{s} \in \breve{\mathbb{H}}^{s}$ and let us find a solution $z(\cdot) \in L_{2}(0,\infty;\breve{\mathbb{H}})$ to \eqref{EQ: NonStatHamiltonianSystemSpatAv} such that $\Pi^{s}_{\breve{A}} z(0) = z^{s}$. Suppose for a moment that we have already found such $z(\cdot)$. Now consider $g(\cdot) = g(\cdot;z^{s})$ such that $g(t)  := G_{\sharp}(t)z^{s}$ for $t>0$ and let $R(\vartheta^{t}(q))g(t) = (g_{v}(t),g_{\eta}(t))$ for any $t \geq 0$, where $R(q)$ is given by \eqref{EQ: NonStatHamiltonianSpatAvPerturbationMatrixR}. Then $\Delta z(t) = (\Delta v(t), \Delta \eta(t)) := z(t) - G_{\sharp}(t)z^{s}$, according to \eqref{EQ: SpatAvNonstatOptConcreteEquation}, satisfies
	\begin{equation}
		\label{EQ: SpatAvgSystemDeltas}
		\begin{split}
			&\frac{d}{dt}\Delta v(t) = A \Delta v(t) - a(\vartheta^{t}(q))(\mathcal{P}_{k,N} + \mathcal{Q}_{k,N})\Delta v(t) + \mathcal{I}_{k,N}\Delta\eta(t) +\\&+ \Delta\eta(t) + 4\left(\frac{\Lambda}{\overline{\mu}}\right)^{2}(\mathcal{P}_{k,N} + \mathcal{Q}_{k,N}z)\Delta\eta(t) + g_{v}(t),\\
			&\frac{d}{dt}\Delta \eta(t) = -A\Delta \eta(t)  + a(\vartheta^{t}(q))(\mathcal{P}_{k,N} + \mathcal{Q}_{k,N})\Delta\eta(t) -\\&- \left(\delta^{2} + \frac{\overline{\mu}^{2}}{4}\right) \mathcal{I}_{k,N} \Delta v(t) - \Lambda^{2}(\mathcal{P}_{k,N} + \mathcal{Q}_{k,N})\Delta v(t) + g_{\eta}(t).
		\end{split}
	\end{equation}
	
	Let $\mathcal{T}$ be the operator which takes each $f(\cdot) \in L_{2}(0,\infty;\mathbb{H})$ into the unique $\eta(\cdot) \in L_{2}(0,\infty;\mathbb{H})$ satisfying \eqref{EQ: SpatAvNonstatOptConcreteEquation}, where $\eta(t)$ in the fist equation is exchanged with $f(t)$. It is a well-defined bounded operator by Lemma \ref{LEM: SpatAvHamiltonSolvabilityVEquation}.
	
	It is not hard to see that \eqref{EQ: SpatAvgSystemDeltas} is equivalent to the solvability of
	\begin{equation}
		\label{EQ: SpatAvgBundleResolutionEq}
		\Delta \eta = \mathcal{T} \Delta \eta + \mathcal{T}_{0} \begin{pmatrix}
			g_{v}\\
			g_{\eta}
		\end{pmatrix}
	\end{equation}
	for certain bounded operator $\mathcal{T}_{0}$ from $L_{2}(0,\infty;\breve{\mathbb{H}})$ to $L_{2}(0,\infty;\mathbb{H})$.
	
	Let us show that $\mathcal{T}$ is a uniform (in $q \in \mathcal{Q}$) contraction in $L_{2}(0,\infty;\mathbb{H})$. From Lemma \ref{LEM: SpatAvHamiltonSolvabilityVEquation} and the first inequality from \eqref{EQ: SpatAvgLagrangeBundlesInequaltiies} we obtain
	\begin{equation}
		\|\mathcal{I}_{k,N}\mathcal{T}\| \leq \frac{(1+1)}{\overline{\mu}^{2}} \left(\delta^{2} + \frac{\overline{\mu}^{2}}{4}\right) = \frac{1}{2} + \frac{2 \delta^{2}}{\overline{\mu}^{2}} \leq \frac{1}{2} + \frac{2}{5} < 1.
	\end{equation}
	Analogously, from Lemma \ref{LEM: SpatAvHamiltonSolvabilityVEquation} and the second inequality from \eqref{EQ: SpatAvgLagrangeBundlesInequaltiies} we get
	\begin{equation}
		\|(\mathcal{P}_{k,N}+\mathcal{Q}_{k,N})\mathcal{T}\| \leq \left(1 + 4 \left(\frac{\Lambda}{\overline{\mu}}\right)^{2} \right) \cdot \frac{\Lambda^{2}}{(\overline{\mu} + k - a_{b})^{2}}.
	\end{equation}
	From \eqref{EQ: SpatAvgFunctionABound} and the first inequality from \eqref{EQ: SpatAvgLagrangeBundlesInequaltiies} we have $a_{b} \leq \Lambda + \overline{\mu}/\sqrt{5}$. Note that the second inequality from \eqref{EQ: SpatAvgLagrangeBundlesInequaltiies} particularly gives that $k > 2\Lambda$. Consequently, 
	\begin{equation}
		(\overline{\mu} + k - a_{b})^{2} \geq (\overline{\mu} + k - \Lambda - \overline{\mu}/\sqrt{5})^{2} > (k - \Lambda)^{2}.
	\end{equation}
	Thus, for the contraction it is sufficient to verify that
	\begin{equation}
		4 \frac{\Lambda^{4}}{\overline{\mu}^{2}} + \Lambda^{2} \leq (k - \Lambda)^{2}.
	\end{equation}
	that is obviously equivalent to the second inequality from \eqref{EQ: SpatAvgLagrangeBundlesInequaltiies}.
	
	Thus, $\mathcal{T}$ is a uniform contraction and, consequently, \eqref{EQ: SpatAvgBundleResolutionEq} has a unique solution $\Delta\eta \in L_{2}(0,\infty;\mathbb{H})$ for each $g$. As in the proof of Theorem \ref{TH: DichotomyStatOptimizationLP}, from Theorem \ref{TH: LyapunovPerronOperatorStationaryTheorem} we get that $M^{+}(q)z^{s} := (\Delta v(0), \Delta \eta(0))$ is a bounded linear operator and in our case the norms are bounded uniformly in $q \in \mathcal{Q}$.
	
	To show that there exists $\varepsilon_{0} > 0$ such that $\mathbb{L}^{+}_{b}(q) = \mathbb{L}^{+}_{\varepsilon}(q)$ for any $q \in \mathcal{Q}$ and $|\varepsilon| < \varepsilon_{0}$, we act as in the proof of Theorem \ref{TH: DichotomyStatOptimizationLP} by exchanging $A$ with $A+\varepsilon I$ and noticing that for small $\varepsilon$ the above arguments can be also applied since the norms of Lyapunov-Perron operators change slightly.
	
	Using the boundedness of the Lyapunov-Perron operator from $L_{2}(\mathbb{R};\breve{\mathbb{H}})$ into $C_{b}(\mathbb{R};\breve{\mathbb{H}})$ given by Theorem \ref{TH: LyapunovPerronOperatorStationaryTheorem}, we get that \eqref{EQ: SpatAveragUniformExpDecay} is satisfied with
	\begin{equation}
		M_{\varepsilon}:=\sup_{t \geq 0}\sup_{q \in \mathcal{Q}}\sup_{|z_{0}|_{\breve{\mathbb{H}}} \leq 1}| e^{\varepsilon t}z(t;q,z_{0}) |_{\breve{\mathbb{H}}} < \infty.
	\end{equation}

    Consequently, for any $t \geq 0$ and $q \in \mathcal{Q}$ we have
	\begin{equation}
		|z(t;z^{s},q)| \leq M_{\varepsilon} e^{-\varepsilon t} 
	\end{equation}
	
	Now let us show that the mapping $\mathcal{Q} \ni q \mapsto M^{+}(q)$ is continuous in the operator norm. For $q \in \mathcal{Q}$ and $z^{s} \in \breve{\mathbb{H}}^{s}$ we put $\Delta z(t;q,z^{s}) := z(t;q,s^{s} + M^{+}(q)z^{s}) - G_{\sharp}(t)z^{s}$ for $t \geq 0$. By definition, we have $M^{+}(q)z^{s} = \Delta z(0;q,z^{s})$. Suppose a sequence $q_{m}$, where $m=1,2,\ldots$, tends to some $q \in \mathcal{Q}$ as $m \to \infty$. Then $(\Delta v_{m}(t), \Delta \eta_{m}(t)) := \Delta z(t;q_{m},z^{s}) - \Delta z(t;q,z^{s}) = $ satisfies an analog of \eqref{EQ: SpatAvgBundleResolutionEq} as
	\begin{equation}
		\Delta \eta_{m} = \mathcal{T}\Delta \eta_{m} + \mathcal{T}_{0}g_{m},
	\end{equation}
	where $g_{m}(t) := [a(\vartheta^{t}(q_{m})) - a(\vartheta^{t}(q))]z(t;q,z^{s}+M^{+}(q)z^{s})$ for $t \geq 0$. We have already shown that the equation is uniquely solvable and $\Delta\eta_{m}$ continuously depend on $g_{m}$ in appropriate $L_{2}$-spaces with a uniform bound. From \eqref{EQ: SpatAveragUniformExpDecay} and the continuity of $a(\cdot)$, it is clear that $g_{m}$ tends to $0$ in $L_{2}(0,\infty;\breve{\mathbb{H}})$. Consequently, $\Delta\eta_{m}$ tends to $0$ in $L_{2}(0,\infty;\mathbb{H})$ and the same holds for $\Delta v_{m}$. From this and since the Lyapunov-Perron operator $\mathfrak{L}_{\breve{A}}$ is bounded from $L_{2}(\mathbb{R};\breve{\mathbb{H}})$ to $C_{b}(\mathbb{R};\breve{\mathbb{H}})$ (see Theorem \ref{TH: LyapunovPerronOperatorStationaryTheorem}), we get that (use \eqref{EQ: NonStatHamiltonianSystemSpatAvRewritten}) $M^{+}(q_{m})z^{s} - M^{+}(q)z^{s} = (\Delta v_{m}(0), \Delta \eta_{m}(0))$ tends to $0$ as $m \to \infty$ uniformly in $|z^{s}|_{\breve{\mathbb{H}}} = 1$ that shows the required continuity.
	
	Note that the above arguments are based on the estimates from Lemma \ref{LEM: SpatAvHamiltonSolvabilityVEquation} and Theorem \ref{TH: LyapunovPerronOperatorStationaryTheorem} which depend only on the spectrum of the associated operators and, consequently, the same estimates remain valid when $\mathbb{H}$ is exchanged with $\mathbb{H}_{\beta}$ for any $\beta \geq 0$. This not only shows that the above arguments directly applicable for any $\beta \geq 0$, but also that the arising constants are uniform in $\beta$.
	
	Clearly, $M^{+}_{\beta_{1}}(q)$ coincides with $M^{+}_{\beta_{2}}(q)$ on $\breve{\mathbb{H}}^{s}_{\beta_{1}}$ for any $\beta_{1} \geq \beta_{2} \geq 0$. From this, \eqref{EQ: SpatAvgGraphFormula} and since $\breve{\mathbb{H}}^{s}_{\beta_{1}}$ is dense in $\breve{\mathbb{H}}^{s}_{\beta_{2}}$, we get that $\mathbb{L}^{+}_{b,\beta_{1}}(q)$ is dense in $\mathbb{L}^{+}_{b,\beta_{2}}(q)$ for any $q \in \mathcal{Q}$ and $\beta_{1} \geq \beta_{2} \geq 0$.
	
	It remains to show that $\mathbb{L}^{+}_{b,\beta}(q)$ is Lagrange and this can be done similarly to the argument applied around \eqref{EQ: StatDichPertbIsotropicSympIden}. We have that for any $z_{0} \in \mathbb{L}^{+}_{b,\gamma}(q)$ with $\gamma > 0$, the corresponding mild solution $z(t)=z(t;q,z_{0})$ belongs to $C([s,\infty);\breve{\mathbb{H}}_{1+\beta}) \cap C^{1}([s,\infty);\breve{\mathbb{H}}_{\beta})$ for any $s > 0$ and $\beta \in [0,\gamma)$ and satisfies \eqref{EQ: NonStatHamiltonianSystemSpatAv} for  $t > 0$ due to the parabolic smoothing (see, for example, Theorem 6.7, Chapter I in \cite{Krein1971}). From this, differentiating and using \eqref{EQ: SymplecticIdentitySpatAvrgScale}, for any $z^{1}_{0}, z^{2}_{0} \in \mathbb{L}^{+}_{b,\gamma}(q)$ we have
	\begin{equation}
		\label{EQ: SymplecticPreservationSpatAvrg}
		\langle z_{1}(t) , J z_{2}(t) \rangle_{\beta} = \langle z^{1}_{0}, Jz^{2}_{0} \rangle_{\beta} \text{ for any } t \geq 0 \text{ and } \beta \in [0,\gamma),
	\end{equation}
	where $z_{1}(t)=z(t;q,z^{1}_{0})$ and $z_{2}(t) = z(t;q,z^{2}_{0})$ are the corresponding mild solutions. Since $\mathbb{L}^{+}_{b,\gamma}(q)$ is dense in $\mathbb{L}^{+}_{b,\beta}(q)$, the above identity can be extended by continuity to any $z^{1}_{0}, z^{2}_{0} \in \mathbb{L}^{+}_{b,\beta}(q)$. Passing to the limit in \eqref{EQ: SymplecticPreservationSpatAvrg} as $t \to +\infty$ and using \eqref{EQ: SpatAveragUniformExpDecay}, we get that that $\mathbb{L}^{+}_{b,\beta}(q)$ is isotropic. Then Theorem \ref{TH: LagrangeSubspaceCriterion} and \eqref{EQ: SpatAvgGraphFormula} imply that $\mathbb{L}^{+}_{b,\beta}(q)$ must be Lagrange. Choosing $\gamma$ arbitrarily large, we get this for any $\beta \geq 0$. The proof is finished.
\end{proof}

\begin{remark}
	\label{REM: ImprovedSpatAvrgConds}
	It is not hard to see that the inequalities from \eqref{EQ: SpatAvgLagrangeBundlesInequaltiies} are sharper than the inequalities from Theorem 4.2 in \cite{KostiankoZelikSA2020} which can be described in our case as
	\begin{equation}
		\label{EQ: SpatAvrgInequalitiesZelik}
		\frac{\overline{\mu}}{4} - \delta > 0, \ \frac{1}{2} k - \frac{16\Lambda^{2}}{\overline{\mu}} - 2 \Lambda \geq 0.
	\end{equation}
	In the next section, it will be shown that \eqref{EQ: SpatAvrgInequalitiesZelik} implies the nonoscillation.
\end{remark}

\subsection{Nonoscillation}

Now we are interested in whether there exists a field of bounded self-adjoint operators $P \colon \mathcal{Q} \to \mathcal{L}(\mathbb{H})$ such that
\begin{equation}
	\label{EQ: NonoscillationSpatAvrg}
	\mathbb{L}^{+}_{b}(q) = \{ (v,-P(q)v) \ | \ v \in \mathbb{H} \} \text{ for any } q \in \mathcal{Q}.
\end{equation}

Note that we automatically have that $P(q)$ must depend continuously on $q \in \mathcal{Q}$ as follows.
\begin{lemma}
	Under the conditions of Theorem \ref{TH: SpatAvrgLagrangeBundle}, suppose that \eqref{EQ: NonoscillationSpatAvrg} is satisfied. Then $P(q)$ depend continuously on $q \in \mathcal{Q}$ in the operator norm.
\end{lemma}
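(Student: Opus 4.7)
The plan is to invoke Proposition \ref{PROP: LagrangeSubspacesContinuousDependence} twice: once in the direction ``continuity of the graph operator $\Rightarrow$ continuity in the Lagrangian-Grassmannian'' to transport the already-established continuity of $M^{+}(q)$ to continuity of $q\mapsto \mathbb{L}^{+}_{b}(q)$, and then once in the opposite direction to pull this back to continuity of the operator $-P(q)$ representing the same Lagrange subspace as a graph over a different splitting.

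\textbf{Step 1 (pushing forward through Proposition \ref{PROP: LagrangeSubspacesContinuousDependence}).} By Theorem \ref{TH: SpatAvrgLagrangeBundle} (with $\beta=0$), we have the graph representation \eqref{EQ: SpatAvgGraphFormula} over the fixed splitting $\breve{\mathbb{H}} = \breve{\mathbb{H}}^{s} \oplus \breve{\mathbb{H}}^{u}$, and the mapping $\mathcal{Q}\ni q\mapsto M^{+}(q)\in \mathcal{L}(\breve{\mathbb{H}}^{s};\breve{\mathbb{H}}^{u})$ is continuous in the operator norm. Applying the implication $2)\Rightarrow 1)$ of Proposition \ref{PROP: LagrangeSubspacesContinuousDependence} with $\mathbb{H}_{\sharp}:=\breve{\mathbb{H}}^{s}$, $\mathbb{H}_{\flat}:=\breve{\mathbb{H}}^{u}$ and $M(q):=M^{+}(q)$, we obtain that $q\mapsto \mathbb{L}^{+}_{b}(q)$ is continuous as a mapping into $\Lambda(\mathbb{H})$.

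\textbf{Step 2 (pulling back through Proposition \ref{PROP: LagrangeSubspacesContinuousDependence}).} By the nonoscillation hypothesis \eqref{EQ: NonoscillationSpatAvrg}, for every $q\in\mathcal{Q}$ the Lagrange subspace $\mathbb{L}^{+}_{b}(q)$ is the graph of the bounded operator $v\mapsto -P(q)v$ over the fixed (in $q$) direct sum decomposition $\breve{\mathbb{H}} = \mathbb{L}_{v} \oplus \mathbb{L}_{\eta}$, where $\mathbb{L}_{v}:=\mathbb{H}\times\{0_{\mathbb{H}}\}$ and $\mathbb{L}_{\eta}:=\{0_{\mathbb{H}}\}\times\mathbb{H}$. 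Under the natural identification $\mathbb{L}_{v}\cong\mathbb{H}$ and $\mathbb{L}_{\eta}\cong\mathbb{H}$, this corresponds to the family $\widetilde{M}(q):=-P(q)\in\mathcal{L}(\mathbb{L}_{v};\mathbb{L}_{\eta})$. Now applying the implication $1)\Rightarrow 2)$ of Proposition \ref{PROP: LagrangeSubspacesContinuousDependence} to this (different) parameterization, the continuity of $q\mapsto\mathbb{L}^{+}_{b}(q)$ in $\Lambda(\mathbb{H})$ established in Step 1 yields continuity of $q\mapsto -P(q)$, hence of $q\mapsto P(q)$, in the operator norm.

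There is no real obstacle; the argument is essentially bookkeeping that exploits the two-sided equivalence of Proposition \ref{PROP: LagrangeSubspacesContinuousDependence}. The only point requiring a brief comment is that the proposition applies to \emph{both} parameterizations because the same subspace $\mathbb{L}^{+}_{b}(q)$ is simultaneously a graph over the spectral splitting $\breve{\mathbb{H}}^{s}\oplus\breve{\mathbb{H}}^{u}$ (by Theorem \ref{TH: SpatAvrgLagrangeBundle}) and over the coordinate splitting $\mathbb{L}_{v}\oplus\mathbb{L}_{\eta}$ (by the nonoscillation hypothesis); each splitting is independent of $q$, which is exactly what the proposition needs.
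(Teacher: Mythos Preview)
Your proof is correct and follows essentially the same approach as the paper: apply Proposition \ref{PROP: LagrangeSubspacesContinuousDependence} first in the direction $2)\Rightarrow 1)$ with the spectral splitting $\breve{\mathbb{H}}^{s}\oplus\breve{\mathbb{H}}^{u}$ and $M(q)=M^{+}(q)$, then in the direction $1)\Rightarrow 2)$ with the coordinate splitting $\mathbb{L}_{v}\oplus\mathbb{L}_{\eta}$ and graph operator $-P(q)$. Your write-up is in fact slightly more careful than the paper's in explicitly distinguishing which implication is used in each step and in noting the sign in the graph operator.
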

\begin{proof}
	Using \eqref{EQ: SpatAvgGraphFormula} and the continuity of $\mathcal{Q} \ni q \mapsto M^{+}(q)$, we can apply Theorem \ref{PROP: LagrangeSubspacesContinuousDependence} with $\mathbb{H}_{\sharp} := \breve{\mathbb{H}}^{s}$, $\mathbb{H}_{\flat}:=\breve{\mathbb{H}}^{u}$ and $M(q) := M^{+}(q)$ to get that $\mathcal{Q} \ni q \mapsto \mathbb{L}^{+}_{b}(q) \in \Lambda(\mathbb{H})$ is continuous. Now under \eqref{EQ: NonoscillationSpatAvrg}, we apply the same theorem with $\mathbb{H}_{\sharp} := \mathbb{L}_{v} = \mathbb{H} \times \{0_{\mathbb{H}}\}$, $\mathbb{H}_{\flat} := \mathbb{L}_{\eta} = \{0_{\mathbb{H}}\} \times \mathbb{H}$ and $M(q) := P(q)$, to get the continuity of $P(q)$ in $q \in \mathcal{Q}$. The proof is finished.
\end{proof}

Under \eqref{EQ: NonoscillationSpatAvrg} one says that the corresponding Hamiltonian system \eqref{EQ: NonStatHamiltonianSystemSpatAv} (or the Lagrange bundle with the fiber $\mathbb{L}^{+}_{b}(q)$) over $(\mathcal{Q},\vartheta)$ is nonoscillating. If the norms of $P(q)$ are bounded uniformly in $q \in \mathcal{Q}$, one speaks of a \textit{uniform nonoscillation}.

Recall the ``vertical'' Lagrange subspace $\mathbb{L}_{\eta} := \{0_{\mathbb{H}}\} \times \mathbb{H}$. From the construction of $\mathbb{L}^{+}_{b}(q)$, similarly to Corollary \ref{COR: StatDichLagrangeSubsFredholmPair} we obtain the following.
\begin{corollary}
	\label{COR: SpatAvrgLagrangeSubsFredholmPair}
	Under the conditions of Theorem \ref{TH: SpatAvrgLagrangeBundle}, for any $q \in \mathcal{Q}$ we have the following:
	\begin{enumerate}
		\item[1)] $\dim (\mathbb{L}^{+}_{b}(q) \cap \mathbb{L}_{\eta}) \leq N$;
		\item[2)] $\mathbb{L}^{+}_{b}(q) + \mathbb{L}_{\eta}$ is closed and has finite codimension $\leq N$.
	\end{enumerate}
	In particular, $(\mathbb{L}^{+}_{b}(q), \mathbb{L}_{\eta})$ is a Fredholm pair (see \cite{Furutani2004FLG}).
\end{corollary}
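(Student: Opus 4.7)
The plan is to mirror, pointwise in $q \in \mathcal{Q}$, the argument used for Corollary \ref{COR: StatDichLagrangeSubsFredholmPair}, with the dichotomy rank $j$ replaced by $N$. Since $A = -A_{0} + \alpha I$ is self-adjoint with $\mathbb{H}^{u}_{A} = \operatorname{span}\{e_{1},\ldots,e_{N}\}$, we have $\mathbb{H}^{s}_{A^{*}} = \mathbb{H}^{s}_{A}$ and $\mathbb{H}^{u}_{A^{*}} = \mathbb{H}^{u}_{A}$, so by \eqref{EQ: StatDualCouplingStableUnstableSubspaces} the dual-coupling subspaces read $\breve{\mathbb{H}}^{s} = \mathbb{H}^{s}_{A} \times \mathbb{H}^{u}_{A}$ and $\breve{\mathbb{H}}^{u} = \mathbb{H}^{u}_{A} \times \mathbb{H}^{s}_{A}$, while the graph representation \eqref{EQ: SpatAvgGraphFormula} from Theorem \ref{TH: SpatAvrgLagrangeBundle} supplies the remaining data.

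For item 1), I would first use \eqref{EQ: SpatAvgGraphFormula} to observe that $\mathbb{L}^{+}_{b}(q) \cap \breve{\mathbb{H}}^{u} = \{0\}$. Setting $\mathbb{L} := \mathbb{L}^{+}_{b}(q) \cap \mathbb{L}_{\eta}$ inside $\mathbb{L}_{\eta} = \left(\{0_{\mathbb{H}}\} \times \mathbb{H}^{s}_{A}\right) \oplus \left(\{0_{\mathbb{H}}\} \times \mathbb{H}^{u}_{A}\right)$, the first summand lies in $\breve{\mathbb{H}}^{u}$ and therefore meets $\mathbb{L}$ only at zero. Lemma \ref{LEM: IntersectionDimensionLemma} applied with $\mathbb{H}_{f} := \{0_{\mathbb{H}}\} \times \mathbb{H}^{s}_{A}$ and $\mathbb{H}_{a} := \{0_{\mathbb{H}}\} \times \mathbb{H}^{u}_{A}$ (of dimension $N$) then yields $\dim \mathbb{L} \leq N$.

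For item 2), I would reduce the closedness and codimension estimate of $\mathbb{L}^{+}_{b}(q) + \mathbb{L}_{\eta}$ inside $\breve{\mathbb{H}}$ to those of $\Pi_{v}\mathbb{L}^{+}_{b}(q)$ inside $\mathbb{L}_{v} := \mathbb{H} \times \{0_{\mathbb{H}}\}$, where $\Pi_{v}$ is the orthogonal projector onto $\mathbb{L}_{v}$. With $\breve{\mathbb{H}}^{s,u}$ as above one gets $\Pi_{v}\breve{\mathbb{H}}^{s} = \mathbb{H}^{s}_{A} \times \{0_{\mathbb{H}}\}$ and $\Pi_{v} M^{+}(q) \breve{\mathbb{H}}^{s} \subset \mathbb{H}^{u}_{A} \times \{0_{\mathbb{H}}\}$, where $\dim \mathbb{H}^{u}_{A} = N$. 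Hence $\Pi_{v}\mathbb{L}^{+}_{b}(q)$ contains the closed subspace $\mathbb{H}^{s}_{A} \times \{0_{\mathbb{H}}\}$ and differs from $\mathbb{L}_{v}$ only by a subspace of the $N$-dimensional $\mathbb{H}^{u}_{A} \times \{0_{\mathbb{H}}\}$, delivering simultaneously closedness and codimension at most $N$ for $\mathbb{L}^{+}_{b}(q) + \mathbb{L}_{\eta}$. Items 1) and 2) together give the Fredholm pair conclusion. I do not expect any real obstacle: the argument is purely pointwise in $q$ and reduces to a linear-algebraic observation verbatim identical to the stationary case; the only point to watch is the swap of stable and unstable factors in the dual-coupling definition of $\breve{\mathbb{H}}^{s,u}$, which the self-adjointness of $A$ makes harmless.
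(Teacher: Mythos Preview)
Your proposal is correct and follows exactly the route the paper indicates: the paper simply says the result follows ``similarly to Corollary \ref{COR: StatDichLagrangeSubsFredholmPair}'' from the graph representation \eqref{EQ: SpatAvgGraphFormula}, and you have spelled out precisely that argument with $j$ replaced by $N$ and the self-adjointness of $A$ used to identify $\mathbb{H}^{s,u}_{A^{*}}$ with $\mathbb{H}^{s,u}_{A}$. One small wording caveat: $\Pi_{v}\mathbb{L}^{+}_{b}(q)$ need not literally \emph{contain} $\mathbb{H}^{s}_{A}\times\{0_{\mathbb{H}}\}$, but rather is the graph over $\mathbb{H}^{s}_{A}\times\{0_{\mathbb{H}}\}$ of a bounded map into $\mathbb{H}^{u}_{A}\times\{0_{\mathbb{H}}\}$ plus a finite-dimensional correction --- this is what both you and the paper actually use, and it yields the closedness and codimension bound just as you state.
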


From this, similarly to Proposition \ref{PROP: NonoscillationStationaryAsTrivialIntersection}, we obtain.
\begin{proposition}
	\label{PROP: NonoscillationSpatAvrgAsTrivialIntersection}
	Under the conditions of Theorem \ref{TH: SpatAvrgLagrangeBundle}, \eqref{EQ: NonoscillationSpatAvrg} holds if and only if $\mathbb{L}^{+}_{b}(q)$ intersects $\mathbb{L}_{\eta}$ trivially for any $q \in \mathcal{Q}$.
\end{proposition}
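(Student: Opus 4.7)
The plan is to mimic the proof of Proposition \ref{PROP: NonoscillationStationaryAsTrivialIntersection} verbatim, with Corollary \ref{COR: SpatAvrgLagrangeSubsFredholmPair} replacing Corollary \ref{COR: StatDichLagrangeSubsFredholmPair}. The ``only if'' direction is immediate: if $\mathbb{L}^{+}_{b}(q)$ is the graph $\{(v,-P(q)v) \mid v \in \mathbb{H}\}$, then a point of the form $(0,\eta) \in \mathbb{L}^{+}_{b}(q)$ must satisfy $\eta = -P(q) \cdot 0 = 0$, so $\mathbb{L}^{+}_{b}(q) \cap \mathbb{L}_{\eta} = \{0\}$.

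For the converse, I would fix $q \in \mathcal{Q}$ and apply Theorem \ref{TH: CriterionLagrangeSubspaceIsAGraph} with $\mathbb{L} := \mathbb{L}_{v} = \mathbb{H} \times \{0_{\mathbb{H}}\}$ (which is Lagrange, with $\mathbb{L}^{\bot} = \mathbb{L}_{\eta}$) and $\mathbb{L}_{0} := \mathbb{L}^{+}_{b}(q)$ (which is Lagrange by Theorem \ref{TH: SpatAvrgLagrangeBundle}). I need to verify item 3 of that theorem, namely the existence of a closed subspace $\mathbb{L}_{c}$ with $\mathbb{L}^{+}_{b}(q) \oplus \mathbb{L}_{c} \oplus \mathbb{L}_{\eta} = \breve{\mathbb{H}}$. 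By the hypothesis, $\mathbb{L}^{+}_{b}(q) \cap \mathbb{L}_{\eta} = \{0\}$, so the sum $\mathbb{L}^{+}_{b}(q) + \mathbb{L}_{\eta}$ is already direct; by item 2 of Corollary \ref{COR: SpatAvrgLagrangeSubsFredholmPair}, this sum is closed with finite codimension $\leq N$, so any algebraic complement $\mathbb{L}_{c}$ is finite-dimensional and hence closed, giving $\mathbb{L}^{+}_{b}(q) \oplus \mathbb{L}_{\eta} \oplus \mathbb{L}_{c} = \breve{\mathbb{H}}$.

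With item 3 verified, Theorem \ref{TH: CriterionLagrangeSubspaceIsAGraph} yields that the orthogonal projector $\Pi_{v}$ onto $\mathbb{L}_{v}$ maps $\mathbb{L}^{+}_{b}(q)$ bijectively onto $\mathbb{L}_{v}$. Its inverse has the form $\mathbb{L}_{v} \ni (v,0) \mapsto (v, -P(q)v) \in \mathbb{L}^{+}_{b}(q)$ for a bounded linear operator $P(q) \in \mathcal{L}(\mathbb{H})$, where boundedness follows from the Bounded Inverse Theorem. Finally, since $\mathbb{L}^{+}_{b}(q)$ is Lagrange, the isotropy condition $\langle (v_{1}, -P(q)v_{1}), J (v_{2}, -P(q)v_{2}) \rangle_{\breve{\mathbb{H}}} = 0$ for all $v_{1},v_{2} \in \mathbb{H}$ rearranges to $\langle P(q)v_{1}, v_{2}\rangle_{\mathbb{H}} = \langle v_{1}, P(q)v_{2}\rangle_{\mathbb{H}}$, forcing $P(q) = P(q)^{*}$, which establishes \eqref{EQ: NonoscillationSpatAvrg}.

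No real obstacle is expected; the only point worth double-checking is that the statement is pointwise in $q$, so no uniform-in-$q$ bound on $P(q)$ is being asserted here (that is addressed separately via the notion of uniform nonoscillation defined above). The entire argument is a clean combination of Theorem \ref{TH: CriterionLagrangeSubspaceIsAGraph} and Corollary \ref{COR: SpatAvrgLagrangeSubsFredholmPair}, exactly parallel to the stationary case.
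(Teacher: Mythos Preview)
Your proposal is correct and follows exactly the approach the paper intends: the paper explicitly states that the result is obtained ``similarly to Proposition \ref{PROP: NonoscillationStationaryAsTrivialIntersection}'', and your argument reproduces that proof verbatim with Corollary \ref{COR: SpatAvrgLagrangeSubsFredholmPair} in place of Corollary \ref{COR: StatDichLagrangeSubsFredholmPair}. There is nothing to add.
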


To proceed further, we need to establish relations between \eqref{EQ: NonStatHamiltonianSystemSpatAv} and quadratic optimization. For this, consider the control system in $\mathbb{H}$ given by
\begin{equation}
	\label{EQ: SpatAvrgControlSystemNonoscExpl}
	\dot{v}(t) = A(\vartheta^{t}(q))v(t) + B\xi(t),
\end{equation}
where $A(q)$ is defined above \eqref{EQ: NonStatHamiltonianSystemSpatAv}, i.e. $A(q) = -A_{0} + \nu_{0}(q) I$ with $\nu_{0}(q) = \alpha - a(q)$ and $\alpha = (\lambda_{N}+\lambda_{N+1})/2$; $\xi(\cdot) \in L_{2}(0,T;\mathbb{U})$ for some $T>0$ and $\mathbb{U} = \mathbb{H} \times \mathbb{H}$; $B\xi := \xi_{I} + \xi_{c}$ for $\xi=(\xi_{I},\xi_{c}) \in \mathbb{U}$.

Similarly to \eqref{EQ: StatHamiltSpaceOfProcessesNorm}, for any $q \in \mathcal{Q}$, we define the set $\mathfrak{M}_{v_{0}}(q)$ of processes through $v_{0} \in \mathbb{H}$ over $q$ and the space of processes $\mathcal{Z}(q)$ given by the union of all $\mathfrak{M}_{v_{0}}(q)$ over $v_{0} \in \mathbb{H}$ and equipped with the norm \eqref{EQ: StatHamiltSpaceOfProcessesNorm} which makes it a Banach space.

For any $q \in \mathcal{Q}$, there is a well-defined integral quadratic functional on $\mathcal{Z}(q)$ given by
\begin{equation}
	\label{EQ: IntegralQuadraticFunctionalSpatAvrg}
	\mathcal{J}_{q}(v(\cdot),\xi(\cdot)) := \int_{0}^{+\infty}\mathcal{F}_{\vartheta^{t}(q)}(v(t),\xi(t))dt,
\end{equation}
where $\mathcal{F}_{q}$ is given by \eqref{EQ: QuadraticFormSpatialAveraging}.

\begin{theorem}
	Under the conditions of Theorem \ref{TH: SpatAvrgLagrangeBundle}, suppose that $\mathcal{J}_{q}$ admits a unique critical point on $\mathfrak{M}_{0}(q)$ for any $q \in \mathcal{Q}$. Then the nonoscillation condition \eqref{EQ: NonoscillationSpatAvrg} is satisfied.
\end{theorem}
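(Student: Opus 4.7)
The plan is to mirror Proposition \ref{PROP: NonoscillationUniqueFixedPoint} from the stationary case, arguing by contradiction and then exploiting the special structure of $B^{*}$ peculiar to the Spatial Averaging setting, which makes the final step much cleaner than in the stationary argument.

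I would first suppose \eqref{EQ: NonoscillationSpatAvrg} fails. Then Proposition \ref{PROP: NonoscillationSpatAvrgAsTrivialIntersection} supplies some $q \in \mathcal{Q}$ and $(0,\eta_{0}) \in \mathbb{L}^{+}_{b}(q)$ with $\eta_{0} \neq 0$. Denote by $(v(\cdot),\eta(\cdot))$ the corresponding solution of \eqref{EQ: NonStatHamiltonianSystemSpatAv}, which by Theorem \ref{TH: SpatAvrgLagrangeBundle} decays exponentially in $\breve{\mathbb{H}}$, and set $\xi(\cdot) := -F^{-1}_{3}F_{2}(\vartheta^{t}(q))v(\cdot) + F^{-1}_{3}B^{*}\eta(\cdot) \in L_{2}(0,\infty;\mathbb{U})$.

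The main step would be to prove a nonautonomous analogue of Lemma \ref{LEM: StationaryStabLagrangeCritPoints}, namely that $(v(\cdot),\xi(\cdot))$ lies in $\mathfrak{M}_{0}(q)$ and is a critical point of $\mathcal{J}_{q}$. The stationary proof should carry over essentially verbatim: the pair satisfies \eqref{EQ: SpatAvrgControlSystemNonoscExpl} with $v(0)=0$ by construction, and the first-order optimality identity is obtained from the $\eta$-equation $\dot{\eta}=-A^{*}(\vartheta^{t}(q))\eta+F_{1}(\vartheta^{t}(q))v+F^{*}_{2}(\vartheta^{t}(q))\xi$ by integrating by parts against any test process $(v_{h},\xi_{h})\in\mathfrak{M}_{0}(q)$ on $[0,T]$ and letting $T\to\infty$, the boundary term $\langle\eta(T),v_{h}(T)\rangle_{\mathbb{H}}$ being killed by the exponential decay \eqref{EQ: SpatAveragUniformExpDecay}.

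With this in hand, the uniqueness hypothesis forces $(v,\xi) \equiv (0,0)$ on $\mathfrak{M}_{0}(q)$, since $(0,0)$ is trivially a critical point. Substituting $v\equiv 0$ and $\xi\equiv 0$ into the definition of $\xi$ yields $F^{-1}_{3}B^{*}\eta\equiv 0$, hence $B^{*}\eta(t)=0$ for a.e.\ $t \geq 0$ and, by continuity of $\eta$, for every $t \geq 0$; in particular $B^{*}\eta_{0}=0$. In contrast to the stationary case, no spectral obstruction argument is needed here: a direct calculation from $B\xi=\xi_{I}+\xi_{c}$ gives $B^{*}w=(w,w)$, which is visibly \emph{injective}, so $\eta_{0}=0$, contradicting $\eta_{0}\neq 0$. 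The hard part will be the technical justification of the nonautonomous integration by parts in the critical-point step, since one must approximate $\eta(\cdot)$ and the time-dependent coefficients $F_{1}(\vartheta^{t}(q)),F^{*}_{2}(\vartheta^{t}(q))$ by sufficiently regular objects; however, because $A(\vartheta^{t}(q)) = -A_{0} + \nu_{0}(\vartheta^{t}(q))I$ differs from the self-adjoint $-A_{0}$ only by a bounded scalar perturbation, the same smoothing and approximation devices used in the stationary proof should apply with only cosmetic changes.
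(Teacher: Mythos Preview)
Your proof is correct and follows the paper's overall strategy: contradiction via Proposition \ref{PROP: NonoscillationSpatAvrgAsTrivialIntersection}, construction of a critical point in $\mathfrak{M}_{0}(q)$ by the nonautonomous analogue of Lemma \ref{LEM: StationaryStabLagrangeCritPoints}, and use of the uniqueness hypothesis to force $v\equiv 0$, $\xi\equiv 0$, hence $B^{*}\eta(t)=0$. Where you diverge is the endgame. After obtaining $B^{*}\eta(t)=0$, the paper derives the reduced $\eta$-equation from \eqref{EQ: SpatAvNonstatOptConcreteEquation}, uses $a_{b}<k+\overline{\mu}$ to conclude $\eta_{0}\in\mathbb{H}^{u}_{A}=\operatorname{span}\{e_{1},\ldots,e_{N}\}$, and then reruns the spectral-subspace/$L_{2}$-controllability argument from Proposition \ref{PROP: NonoscillationUniqueFixedPoint} to reach a contradiction. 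Your shortcut---computing $B^{*}w=(w,w)$ directly from $B\xi=\xi_{I}+\xi_{c}$ and using its manifest injectivity to get $\eta_{0}=0$ at once---is genuinely simpler and entirely valid here. The paper's route is more portable (it would survive replacing $B$ by any operator for which $(A,B)$ is $L_{2}$-controllable), whereas your argument exploits the specific Spatial Averaging choice of $B$; in this setting that exploitation buys a one-line conclusion in place of a spectral detour.
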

\begin{proof}
	In virtue of Proposition \ref{PROP: NonoscillationSpatAvrgAsTrivialIntersection}, it is sufficient to show that $\mathbb{L}^{+}_{b}(q)$ intersects $\mathbb{L}_{\eta}$ trivially for any $q \in \mathcal{Q}$. Supposing the opposite, we obtain $(0,\eta_{0}) \in \mathbb{L}^{+}_{b}(q)$ with some nonzero $\eta_{0} \in \mathbb{H}$. Consider the corresponding solution $(v(\cdot),\eta(\cdot))$ of \eqref{EQ: NonStatHamiltonianSystemSpatAv} with $v(0) = 0$ and $\eta(0) = \eta_{0}$ and put $\xi(t) := -F^{-1}_{3}F_{2}(\vartheta^{t}(q))v(t) + F^{-1}_{3}B^{*}\eta(t)$ for $t \geq 0$. Clearly, $(v(\cdot),\xi(\cdot))$ belongs to $\mathfrak{M}_{0}(q)$. In the same way as in Lemma \ref{LEM: StationaryStabLagrangeCritPoints}, we may show that it must be a critical point for $\mathcal{J}_{q}$. Since such a point is unique by the hypotheses, we must have $v(\cdot) \equiv 0$ and $\xi(\cdot) \equiv 0$. In particular, $B^{*}\eta(t) = 0$ and, due to \eqref{EQ: NonStatHamiltonianSystemSpatAv}, we get that $\eta(\cdot)$ must satisfy $\dot{\eta}(t) = - A\eta(t) + (\mathcal{P}_{k,N} + \mathcal{Q}_{k,N})\eta(t)$ for all $t \geq 0$ with $A$ as in \eqref{EQ: NonStatHamiltonianSystemSpatAvRewritten}. Since $\sup_{q \in \mathcal{Q}}|a(q)| < k + \overline{\mu}$, the initial data $\eta_{0}$ must belong to the unstable subspace $\mathbb{H}^{u}_{A}$ of $A$, i.e. the linear span of $e_{1}, \ldots, e_{N}$. From this, by repeating the proof of Proposition \ref{PROP: NonoscillationUniqueFixedPoint}, we deduce that $B$ must vanish on a subspace of $\mathbb{H}^{u}_{A}$ that contradicts to the $L_{2}$-controllability of $(A,B)$. The proof is finished.
\end{proof}

Now along with the projectors from \eqref{EQ: SpatAvgProjectorsDef}, we consider the complementary projectors $\mathcal{P}_{N}$ and $\mathcal{Q}_{N}$, where $\mathcal{P}_{N}$ is the orthogonal projector onto the linear span of $e_{1},\ldots,e_{N}$. Let us consider the quadratic form in $\mathbb{H}$ given by
\begin{equation}
	\label{EQ: SingularQuadraticFormSpatAvrg}
	V(v) :=  \frac{\overline{\mu}}{2} \cdot|\mathcal{P}_{N} v|^{2}_{\mathbb{H}}  - \frac{\overline{\mu}}{2} \cdot |\mathcal{Q}_{N} v|^{2}_{\mathbb{H}} \text{ for } v \in \mathbb{H}.  
\end{equation}

The following theorem gives a view on Theorem 4.2 in \cite{KostiankoZelikSA2020} (with $\gamma = 0$) as a result which investigates the nonoscillation (see Theorem \ref{TH: SpatAveragingNonOscill} below). Although there is a difference in the statements (and, in fact, our statement is stronger for the considered case), the proof follows similar lines.
\begin{theorem}
	\label{TH: SpatAvrgAuxiliaryCone}
	Suppose there exist positive integers $k$ and $N$ and positive reals $\Lambda$ and $\delta$ such that the inequalities 
	\begin{equation}
		\label{EQ: SpatAvrgNonOscillConditions}
		\frac{\overline{\mu}}{2} - \delta > 0 \text{ and } k - \frac{5\Lambda^{2}}{\overline{\mu}} - \Lambda \geq 0
	\end{equation}
	 are satisfied. Let the coefficients $\tau_{1}$, $\tau_{2}$ and $\tau_{3}$ be as in \eqref{EQ: SpatAvrgTauChoice}, i.e. $\tau_{1} = 1$, $\tau_{2} = \overline{\mu}^{2}/(4\Lambda^{2})$ and $\tau_{3} = 1$. Then there exists $\delta_{V} > 0$ such that the quadratic form $V(\cdot)$ from \eqref{EQ: SingularQuadraticFormSpatAvrg} satisfies
	\begin{equation}
		\label{EQ: SpatAvrgSingFuncIntegralCond}
		V(v(T)) - V(v_{0}) + \int_{0}^{T}\mathcal{F}_{\vartheta^{t}(q)}(v(t),\xi(t))dt \geq \delta_{V} \int_{0}^{T}(|v(t)|^{2}_{\mathbb{H}} + |\xi(t)|^{2}_{\mathbb{U}})dt
	\end{equation}
	for any $q \in \mathcal{Q}$, $T>0$ and any solution $v(\cdot)$ to \eqref{EQ: SpatAvrgControlSystemNonoscExpl} on $[0,T]$ with $\xi(\cdot) \in L_{2}(0,T;\mathbb{U})$.
\end{theorem}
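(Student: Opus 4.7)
The plan is to reduce \eqref{EQ: SpatAvrgSingFuncIntegralCond} to the pointwise Lyapunov-type inequality
\begin{equation*}
\dot V(v(t)) + \mathcal{F}_{\vartheta^{t}(q)}(v(t),\xi(t)) \geq \delta_{V}(|v(t)|_{\mathbb{H}}^{2} + |\xi(t)|_{\mathbb{U}}^{2})
\end{equation*}
and then integrate; by density of classical solutions ($v(0) \in \mathcal{D}(A_{0})$ with smooth $\xi(\cdot)$) the pointwise version suffices. Introducing the self-adjoint sign operator $S := \mathcal{P}_{N} - \mathcal{Q}_{N}$ so that $V(v) = \frac{\overline{\mu}}{2}\langle Sv, v\rangle_{\mathbb{H}}$, the chain rule combined with \eqref{EQ: SpatAvrgControlSystemNonoscExpl} gives
\begin{equation*}
\dot V = \overline{\mu}\langle Sv, (\alpha I - A_{0})v\rangle_{\mathbb{H}} - \overline{\mu}a(q)\langle Sv, v\rangle_{\mathbb{H}} + \overline{\mu}\langle Sv, \xi_{I}+\xi_{c}\rangle_{\mathbb{H}},
\end{equation*}
and its first summand, expanded on the eigenbasis, equals $\overline{\mu}\sum_{j}|\alpha - \lambda_{j}|c_{j}^{2}$ with $c_{j} := \langle v, e_{j}\rangle_{\mathbb{H}}$, bounded below by $\overline{\mu}(\overline{\mu}+k)$, $\overline{\mu}^{2}$, and $\overline{\mu}\max(\overline{\mu}, k-\overline{\mu})$ times the squared norms of $\mathcal{P}_{k,N}v$, $\mathcal{I}_{k,N}v$, and $\mathcal{Q}_{k,N}v$ respectively.

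Exploiting the simultaneous diagonalization of $S$, $A_{0}$ and the spectral projectors, $\dot V + \mathcal{F}_{q}$ splits as a sum over modes $j$ of $3\times 3$ quadratic forms in $(c_{j}, \langle \xi_{I}, e_{j}\rangle, \langle \xi_{c}, e_{j}\rangle)$, whose positive-definiteness is checked via Sylvester's criterion. On $\operatorname{Ran}(\mathcal{I}_{k,N})$, expanding $\mathcal{F}_{1} = |\mathcal{I}_{k,N}\xi_{I}|^{2} - 2a(q)\langle\mathcal{I}_{k,N}\xi_{I},\mathcal{I}_{k,N}v\rangle + a(q)^{2}|\mathcal{I}_{k,N}v|^{2} - \delta^{2}|\mathcal{I}_{k,N}v|^{2}$ and carrying the $a(q)$-linear cross-term alongside $\overline{\mu}\langle S\mathcal{I}_{k,N}v, \mathcal{I}_{k,N}\xi_{I}\rangle$ and the restriction of $-\overline{\mu}a(q)\langle Sv, v\rangle$ to $\mathcal{I}_{k,N}$ produces a remarkable cancellation: all $a(q)$-dependent contributions to the determinant cancel exactly, leaving $\det = \overline{\mu}^{2}/4 - \delta^{2}$, positive by the first inequality in \eqref{EQ: SpatAvrgNonOscillConditions}. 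On $\operatorname{Ran}(\mathcal{P}_{k,N})$ and $\operatorname{Ran}(\mathcal{Q}_{k,N})$, analogous computations give determinants of the form $(\overline{\mu}^{2}/(4\Lambda^{2}))[\overline{\mu}(\text{spectral gap}) \mp \overline{\mu}a(q) - 2\Lambda^{2} - \overline{\mu}^{2}/4]$, which are rendered positive for all $a(q) \in [-a_{b}, a_{b}]$ by invoking the spectral gap bounds (in particular the additional bound $\lambda_{j} - \alpha > k - \overline{\mu}$ on $\operatorname{Ran}(\mathcal{Q}_{k,N})$) together with $a_{b} \leq \Lambda + \delta < \Lambda + \overline{\mu}/2$ and the structural constraint $k \geq \Lambda + 5\Lambda^{2}/\overline{\mu}$.

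The main obstacle is the delicate mode-wise algebra and the sharp choice of $\tau_{1}, \tau_{2}, \tau_{3}$ from \eqref{EQ: SpatAvrgTauChoice}. The cancellation on intermediate modes is essential: if the $a(q)$-coupling were handled by naive Young inequalities rather than being kept coherent with the $\mathcal{F}_{1}$-expansion, a residual loss of order $\overline{\mu}a_{b}$ on $|\mathcal{I}_{k,N}v|^{2}$ would persist and wipe out the margin $\overline{\mu}^{2}/4 - \delta^{2}$; it is precisely this cancellation that permits the comparatively mild bound $\overline{\mu}/2 > \delta$. On $\operatorname{Ran}(\mathcal{Q}_{k,N})$, whose spectral gap may reduce to just $\overline{\mu}$ when $k < 2\overline{\mu}$, uniform control of the determinant in $a(q)$ demands tight tracking of both spectral bounds in tandem with $k \geq \Lambda + 5\Lambda^{2}/\overline{\mu}$. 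Sylvester's criterion then yields a uniform $\delta_{V} > 0$, and integration over $[0,T]$ with the density argument concludes \eqref{EQ: SpatAvrgSingFuncIntegralCond}.
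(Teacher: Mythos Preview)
Your strategy coincides with the paper's: both reduce \eqref{EQ: SpatAvrgSingFuncIntegralCond} to the pointwise inequality
\[
\langle (A - a(q))v + B\xi,\ \overline{\mu}(\mathcal{P}_{N}-\mathcal{Q}_{N})v\rangle_{\mathbb{H}} + \mathcal{F}_{q}(v,\xi) \geq \delta_{V}(|v|^{2}_{\mathbb{H}}+|\xi|^{2}_{\mathbb{U}}),
\]
split according to the three spectral bands $\mathcal{P}_{k,N},\mathcal{I}_{k,N},\mathcal{Q}_{k,N}$, and exploit exactly the same $a(q)$-cancellation on the intermediate band. The only difference is packaging: the paper absorbs the cross terms $\langle Sv,\xi_{I}\rangle$, $\langle Sv,\xi_{c}\rangle$ into the $\mathcal{F}_{i}$'s by weighted Young inequalities (with the weights $\tau_{i}-\varepsilon$), whereas you compute the Schur complement of the mode-wise $3\times 3$ form. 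These are algebraically the same operation, since the Schur complement is precisely the optimal completion of the square. The paper also uses an auxiliary $\varepsilon$-shrinkage $\mathcal{F}^{(\varepsilon)}_{q}$ to extract the $|\xi|^{2}$-term a posteriori, while you obtain it directly from uniform positive-definiteness; your route is slightly cleaner here.

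There is one point where your sketch is looser than the paper's. On $\operatorname{Ran}\mathcal{Q}_{k,N}$ the paper invokes $\langle -A\mathcal{Q}_{k,N}v,\mathcal{Q}_{k,N}v\rangle\geq(\overline{\mu}+k)|\mathcal{Q}_{k,N}v|^{2}$ (citing \cite{KostiankoZelikSA2020}), and this full strength is what makes the coefficient in front of $|(\mathcal{P}_{k,N}+\mathcal{Q}_{k,N})v|^{2}$ positive under \eqref{EQ: SpatAvrgNonOscillConditions}. From the definition \eqref{EQ: SpatAvgProjectorsDef} alone you extract only $\lambda_{j}-\alpha>k-\overline{\mu}$ together with $\lambda_{j}-\alpha\geq\overline{\mu}$, and with these weaker bounds your Schur complement
\[
\overline{\mu}(\lambda_{j}-\alpha)+\overline{\mu}a(q)-2\Lambda^{2}-\overline{\mu}^{2}/4
\]
need \emph{not} be positive for all admissible $a(q)$ under \eqref{EQ: SpatAvrgNonOscillConditions}; for instance $\Lambda\approx 0.6\,\overline{\mu}$ breaks both branches of your ``tandem'' estimate. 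So the phrase ``tight tracking of both spectral bounds'' is not enough---you need the same $(\overline{\mu}+k)$-bound the paper uses on $\mathcal{Q}_{k,N}$ to close the argument.
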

\begin{proof}
	For $\varepsilon>0$, let us put 
	\begin{equation}
		\mathcal{F}^{(\varepsilon)}_{q}(v,\xi) :=  (\tau_{1}-\varepsilon) \mathcal{F}_{1,q}(v,\xi) + (\tau_{2}-\varepsilon)\mathcal{F}_{2}(v,\xi) + (\tau_{3}-\varepsilon) \mathcal{F}_{3}(v,\xi),
	\end{equation}
	where the quadratic forms on the right-hand side are given by \eqref{EQ: SpatAvrgQuadraticForms}.
	
	Note that it is sufficient to show that there exists $\delta_{0}>0$ such that for all sufficiently small $\varepsilon > 0$ we have
	\begin{equation}
		\langle Av - a(q)v + \xi_{I} + \xi_{c}, \overline{\mu}(\mathcal{P}_{N} - \mathcal{Q}_{N})v \rangle_{\mathbb{H}} + \mathcal{F}^{(\varepsilon)}_{q}(v,\xi) \geq \delta_{0}|v|^{2}_{\mathbb{H}}
	\end{equation}
	satisfied for any $q \in \mathcal{Q}$, $v \in \mathcal{D}(A) = \mathcal{D}(A_{0})$ and $\xi = (\xi_{I},\xi_{c}) \in \mathbb{U}$. Indeed, by taking $\varepsilon>0$ sufficiently small, from the above inequality and \eqref{EQ: SpatAvrgQuadraticForms} we get the infinitesimal analog of \eqref{EQ: SpatAvrgSingFuncIntegralCond}
	\begin{equation}
		\langle Av - a(q)v + \xi_{I} + \xi_{c}, \overline{\mu}(\mathcal{P}_{N} - \mathcal{Q}_{N})v \rangle_{\mathbb{H}} + \mathcal{F}_{q}(v,\xi) \geq \delta_{V} (|v|^{2}_{\mathbb{H}} + |\xi|^{2}_{\mathbb{U}})
	\end{equation}
	with an appropriate $\delta_{V}>0$.
	
	By (4.6) from \cite{KostiankoZelikSA2020}, we have
	\begin{equation}
		\begin{split}
			\langle A\mathcal{P}_{k,N}v,\mathcal{P}_{k,N} v \rangle_{\mathbb{H}} \geq (\overline{\mu}+k) | \mathcal{P}_{k,N} v |^{2}_{\mathbb{H}},\\
			\langle -A\mathcal{Q}_{k,N}v,\mathcal{Q}_{k,N} v \rangle_{\mathbb{H}} \geq (\overline{\mu}+k) | \mathcal{Q}_{k,N} v |^{2}_{\mathbb{H}}.
		\end{split}
	\end{equation}
	Moreover, similarly to the estimates from Proposition 4.1 in \cite{KostiankoZelikSA2020}, we get
	\begin{equation}
		\begin{split}
			\langle A\mathcal{P}_{N} \mathcal{I}_{k,N}v, \mathcal{P}_{N}\mathcal{I}_{k,N}v \rangle_{\mathbb{H}} \geq \overline{\mu} |\mathcal{P}_{N}\mathcal{I}_{k,N} v|^{2}_{\mathbb{H}},\\
			\langle -A\mathcal{Q}_{N} \mathcal{I}_{k,N}v, \mathcal{Q}_{N}\mathcal{I}_{k,N}v \rangle_{\mathbb{H}} \geq \overline{\mu} |\mathcal{Q}_{N}\mathcal{I}_{k,N} v|^{2}_{\mathbb{H}}.
		\end{split}
	\end{equation}
	Combining these inequalities and using that $\mathcal{P}_{N} v = \mathcal{P}_{k,N}v + \mathcal{P}_{N}\mathcal{I}_{k,N} v$ and $\mathcal{Q}_{N}v = \mathcal{Q}_{k,N}v + \mathcal{Q}_{N}\mathcal{I}_{k,N} v$, we get
	\begin{equation}
		\label{EQ: SpatAvrgNonOscLinearPart}
		\langle Av, \overline{\mu}(\mathcal{P}_{N} - \mathcal{Q}_{N})v \rangle_{\mathbb{H}} \geq \overline{\mu} k | (\mathcal{P}_{k,N} + \mathcal{Q}_{k,N}) v|^{2}_{\mathbb{H}} + \overline{\mu}^{2}| v |^{2}_{\mathbb{H}}.
	\end{equation}
	
	From the Cauchy and Young inequalities\footnote{Below, we are often aimed to estimate a product $ab$ with $a,b \geq 0$ from above as $\gamma a^{2} + \ldots$ for a given $\gamma > 0$. For this, we write $a b = 2 \cdot (\sqrt{\gamma} a) (b/(2\sqrt{\gamma}))$ and apply the Young inequality to the brackets. Thus, $a b \leq \gamma a^{2} + b^{2}/(4\gamma)$.}, we get
	\begin{equation}
		\label{EQ: SpatAvrgNonOscXiCPart}
		\begin{split}
			\langle \xi_{c}, \overline{\mu}(\mathcal{P}_{N}-\mathcal{Q}_{N}) v \rangle_{\mathbb{H}} =\\= \langle \xi_{c}, \overline{\mu}\mathcal{P}_{k, N} v \rangle_{\mathbb{H}} - \langle \xi_{c}, \overline{\mu}\mathcal{Q}_{k,N} v \rangle_{\mathbb{H}} + \langle \xi_{c}, \overline{\mu}(\mathcal{P}_{N}\mathcal{I}_{k,N}-\mathcal{Q}_{N}\mathcal{I}_{k,N}) v \rangle_{\mathbb{H}} \geq \\ \geq -|P_{k,N}\xi_{c}|_{\mathbb{H}} \cdot \overline{\mu} |\mathcal{P}_{k,N}v|_{\mathbb{H}} - |\mathcal{Q}_{k,N}\xi_{c}|_{\mathbb{H}} \cdot \overline{\mu} |\mathcal{Q}_{k,N}v|_{\mathbb{H}} - |\mathcal{I}_{k,N}\xi_{c}|_{\mathbb{H}}\cdot \overline{\mu} |\mathcal{I}_{k,N}v|_{\mathbb{H}} \geq \\ \geq
			-(\tau_{3}-\varepsilon) |\xi_{c}|^{2}_{\mathbb{H}}  -\frac{\overline{\mu}^{2}}{4(\tau_{3}-\varepsilon)}|v|^{2}_{\mathbb{H}}.
		\end{split}
	\end{equation}
	Analogously, we write
	\begin{equation}
		\begin{split}
			\langle \xi_{I}, \overline{\mu}(\mathcal{P}_{N}-\mathcal{Q}_{N}) v \rangle_{\mathbb{H}} = \\ =  \langle \xi_{I}, \overline{\mu}\mathcal{P}_{k,N}v \rangle_{\mathbb{H}} - \langle \xi_{I}, \overline{\mu}\mathcal{Q}_{k,N}v \rangle_{\mathbb{H}} + \langle \xi_{I}, \overline{\mu}(\mathcal{P}_{N}\mathcal{I}_{k,N}-\mathcal{Q}_{N}\mathcal{I}_{k,N}) v\rangle_{\mathbb{H}}
		\end{split}
	\end{equation}
	and use the estimates
	\begin{equation}
		\label{EQ: SpatAvrgNonOscXiIprojPart}
		\begin{split}
			\langle \xi_{I}, \overline{\mu}\mathcal{P}_{k,N}v \rangle_{\mathbb{H}} - \langle \xi_{I}, \overline{\mu}\mathcal{Q}_{k,N}v \rangle_{\mathbb{H}} \geq \\ \geq- (\tau_{2} - \varepsilon)|(\mathcal{P}_{k,N} + \mathcal{Q}_{k,N}) \xi_{I}|^{2}_{\mathbb{H}} - \frac{4\Lambda^{2}}{\theta_{\varepsilon}} |(\mathcal{P}_{k,N} + \mathcal{Q}_{k,N}) v|^{2}_{\mathbb{H}},
		\end{split}
	\end{equation}
	where $\tau_{2} \theta_{\varepsilon} = \tau_{2} - \varepsilon$, i.e. $\theta_{\varepsilon} = 1 - \varepsilon/\tau_{2}$,
	and
	\begin{equation}
		\label{EQ: SpatAvrgNonOscXiIinterPart}
		\begin{split}
			\langle \xi_{I}, \overline{\mu}(\mathcal{P}_{N}\mathcal{I}_{k,N}-\mathcal{Q}_{N}\mathcal{I}_{k,N}) v\rangle_{\mathbb{H}} =\\= \langle \mathcal{I}_{k,N}\xi_{I} - a (q)\mathcal{I}_{k,N}v, \overline{\mu}(\mathcal{P}_{N}\mathcal{I}_{k,N}-\mathcal{Q}_{N}\mathcal{I}_{k,N}) v\rangle_{\mathbb{H}} +\\+ \langle  a (q)\mathcal{I}_{k,N}v, \overline{\mu}(\mathcal{P}_{N}\mathcal{I}_{k,N}-\mathcal{Q}_{N}\mathcal{I}_{k,N}) v\rangle_{\mathbb{H}} \geq \\ \geq - (\tau_{1} - \varepsilon)|\mathcal{I}_{k,N}\xi_{I} - a(q) \mathcal{I}_{k,N}v|^{2}_{\mathbb{H}} - \frac{\overline{\mu}^{2}}{4(\tau_{1}-\varepsilon)}|\mathcal{I}_{k,N}v|^{2} +\\+ \overline{\mu}a(q) \left(|(\mathcal{P}_{N}\mathcal{I}_{k,N}v|^{2}_{\mathbb{H}}-|\mathcal{Q}_{N}\mathcal{I}_{k,N}v|^{2}_{\mathbb{H}}\right).
		\end{split}
	\end{equation}
	Moreover, it is not hard to see that
	\begin{equation}
		\label{EQ: SpatAvrgNonOscXiIremainderPart}
		\begin{split}
			\overline{\mu}a(q) \left(|(\mathcal{P}_{N}\mathcal{I}_{k,N}v|^{2}_{\mathbb{H}}-|\mathcal{Q}_{N}\mathcal{I}_{k,N}v|^{2}_{\mathbb{H}}\right) \geq \\ \geq \langle a(q) v, \overline{\mu}(\mathcal{P}_{N} - \mathcal{Q}_{N})v \rangle_{\mathbb{H}} - \overline{\mu}(\Lambda + \delta) |(\mathcal{P}_{k,N} + \mathcal{Q}_{k,N})v|^{2}_{\mathbb{H}}.
		\end{split}
	\end{equation}
	
	Combining \eqref{EQ: SpatAvrgNonOscLinearPart}, \eqref{EQ: SpatAvrgNonOscXiCPart}, \eqref{EQ: SpatAvrgNonOscXiIprojPart}, \eqref{EQ: SpatAvrgNonOscXiIinterPart} and \eqref{EQ: SpatAvrgNonOscXiIremainderPart}, we obtain
	\begin{equation}
		\begin{split}
			\langle Av - a(q)v + \xi_{I} + \xi_{c}, \overline{\mu}(\mathcal{P}_{N} - \mathcal{Q}_{N})v \rangle_{\mathbb{H}} + \mathcal{F}^{(\varepsilon)}_{q}(v,\xi) \geq \\ \geq \left(\overline{\mu}^{2} - \delta^{2}(\tau_{1}-\varepsilon) - \Lambda^{2}(\tau_{2}-\varepsilon) - \frac{\overline{\mu}^{2}}{4(\tau_{3}-\varepsilon)} - \frac{\overline{\mu}^{2}}{4(\tau_{1}-\varepsilon)} \right) |\mathcal{I}_{k,N} v|^{2}_{\mathbb{H}} + \\+
			\left(\overline{\mu}^{2} + \overline{\mu} k - \Lambda^{2}(\tau_{3}-\varepsilon) - \frac{\overline{\mu}^{2}}{4(\tau_{3}-\varepsilon)} - \frac{4\Lambda^{2}}{\theta_{\varepsilon}} - \overline{\mu} (\Lambda + \delta) \right) |(\mathcal{P}_{k,N} + \mathcal{Q}_{k,N})v|^{2}_{\mathbb{H}}.
		\end{split}
	\end{equation}
	Since the expressions in brackets depend continuously on $\varepsilon$, it is sufficient to establish their positivity for $\varepsilon=0$. This leads to the conditions
	\begin{equation}
		\frac{\overline{\mu}^{2}}{4} - \delta^{2} > 0 \text{ and } \overline{\mu}k + \frac{3}{4}\overline{\mu}^{2} - 5\Lambda^{2} - \overline{\mu}(\Lambda + \delta) > 0.
	\end{equation}
	Here the first inequality is as in \eqref{EQ: SpatAvrgNonOscillConditions}. In particular, we have $\frac{3}{4}\overline{\mu}^{2} - \overline{\mu}\delta > 0$ and the second inequality can be weakened to 
	\begin{equation}
		k - \frac{5 \Lambda^{2}}{\mu} - \Lambda \geq 0
	\end{equation}
	which is exactly the second inequality in \eqref{EQ: SpatAvrgNonOscillConditions}. The proof is finished.
\end{proof}

\begin{remark}
	\label{REM: ImprovedSpatAvrgCondsNonoOscillation}
	It is not hard to see that the inequalities from \eqref{EQ: SpatAvrgNonOscillConditions} are satisfied under \eqref{EQ: SpatAvrgInequalitiesZelik}.
\end{remark}

\begin{theorem}
	\label{TH: SpatAveragingNonOscill}
	Let the conditions of Theorems \ref{TH: SpatAvrgLagrangeBundle} and \ref{TH: SpatAvrgAuxiliaryCone} be satisfied. Then the nonoscillation condition \eqref{EQ: NonoscillationSpatAvrg} is satisfied and
	\begin{equation}
		\sup_{q \in \mathcal{Q}}\| P(q)\|_{\mathcal{L}(\mathbb{H})} \leq \delta^{-1}_{V},
	\end{equation}
	where $\delta_{V}$ as in \eqref{EQ: SpatAvrgSingFuncIntegralCond}. In other words, the Hamiltonian system \eqref{EQ: NonStatHamiltonianSystemSpatAv} is uniformly nonoscillating.
	
	Moreover, there exists $\delta_{V} > 0$ such that the family of quadratic forms $V_{q}(v) := \langle v, P(q)v \rangle_{\mathbb{H}}$, where $q \in \mathcal{Q}$, satisfies
	\begin{equation}
		V_{\vartheta^{T}(q)}(v(T)) - V_{q}(v_{0}) + \int_{0}^{T}\mathcal{F}_{\vartheta^{t}(q)}(v(t),\xi(t))dt \geq \delta_{V} \int_{0}^{T} (|v(t)|^{2}_{\mathbb{H}} + |\xi(t)|^{2}_{\mathbb{U}})dt
	\end{equation}
	for any $T>0$ and any solution $v(\cdot)$ satisfying \eqref{EQ: SpatAvrgControlSystemNonoscExpl} on $[0,T]$ with $\xi(\cdot) \in L_{2}(0,T;\mathbb{H})$. In particular, for any $q \in \mathcal{Q}$, $V_{q}(v) < 0$ for any nonzero $v \in \operatorname{Ran}\mathcal{Q}_{N}$ and $V_{q}(v) > 0$ for any nonzero $v \in \operatorname{Ran}\mathcal{P}_{N}$.
\end{theorem}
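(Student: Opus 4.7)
My plan is to proceed in four stages: (i) extract uniform coercivity of $\mathcal{J}_q$ on $\mathfrak{M}_0(q)$ from Theorem~\ref{TH: SpatAvrgAuxiliaryCone}; (ii) conclude the nonoscillation from the theorem preceding this one; (iii) identify $V_q(v_0)$ with the minimum value of $\mathcal{J}_q$ on $\mathfrak{M}_{v_0}(q)$ via the Lagrange structure; and (iv) obtain the integral Lyapunov inequality through a perturbation and dynamic-programming argument modeled on the final theorem of Section~\ref{SEC: LagrangeBundlesStationaryCase}.

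For (i), I would take $(v,\xi) \in \mathfrak{M}_0(q)$ and apply \eqref{EQ: SpatAvrgSingFuncIntegralCond} with $v_0 = 0$. Since $v \in L_2(0,\infty;\mathbb{H})$ is continuous, one picks $T_k \to \infty$ with $|v(T_k)|_{\mathbb{H}} \to 0$; the bound $|V(w)| \leq (\overline{\mu}/2)|w|^2_{\mathbb{H}}$ then forces $V(v(T_k)) \to 0$, while boundedness of the form $\mathcal{F}$ yields $\int_0^{T_k}\mathcal{F}\,dt \to \mathcal{J}_q(v,\xi)$. In the limit,
\begin{equation*}
	\mathcal{J}_q(v,\xi) \geq \delta_V\bigl(\|v\|^2_{L_2(0,\infty;\mathbb{H})} + \|\xi\|^2_{L_2(0,\infty;\mathbb{U})}\bigr)
\end{equation*}
uniformly in $q \in \mathcal{Q}$. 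A coercive quadratic form on the linear subspace $\mathfrak{M}_0(q)$ admits only the trivial critical point, so step (ii) is immediate from the theorem preceding Theorem~\ref{TH: SpatAveragingNonOscill}; the preceding lemma supplies continuity of $P(q)$.

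For (iii), I would launch the Hamiltonian trajectory from $(v_0,-P(q)v_0) \in \mathbb{L}^+_b(q)$; denote it $(v^0,\eta^0)$, which decays exponentially by Theorem~\ref{TH: SpatAvrgLagrangeBundle}. Setting $\xi^0 := -F^{-1}_3 F_2(\vartheta^{\cdot}(q))v^0 + F^{-1}_3 B^* \eta^0$ places $(v^0,\xi^0)$ in $\mathfrak{M}_{v_0}(q)$. The nonautonomous adaptation of Lemma~\ref{LEM: StationaryStabLagrangeCritPoints}, via integration by parts on $[0,T]$ followed by the decay as $T\to\infty$, reveals that $(v^0,\xi^0)$ is a critical point of $\mathcal{J}_q$ on $\mathfrak{M}_{v_0}(q)$; coercivity from (i) applied to differences of two processes makes it the unique minimizer, and the same computation identifies the minimum value as $\mathcal{J}_q(v^0,\xi^0) = \langle v_0, P(q)v_0\rangle_{\mathbb{H}} = V_q(v_0)$.

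In (iv), I would follow the perturbation trick of the stationary proof by replacing $\mathcal{F}_q$ with $\mathcal{F}^{(\varepsilon)}_q := \mathcal{F}_q - \varepsilon(|v|^2_{\mathbb{H}} + |\xi|^2_{\mathbb{U}})$ for small $\varepsilon > 0$. The inequalities \eqref{EQ: SpatAvgLagrangeBundlesInequaltiies} and \eqref{EQ: SpatAvrgNonOscillConditions} being open, stages (i)--(iii) go through for the perturbed data and yield $P^{(\varepsilon)}(q)$ and $V^{(\varepsilon)}_q$ with $P^{(\varepsilon)}(q) \to P(q)$ in the operator norm uniformly in $q$ as $\varepsilon \to 0^+$. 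Extending any $(v,\xi) \in \mathfrak{M}_{v_0}(q)$ on $[0,T]$ by the optimal continuation on $[T,\infty)$ and invoking minimality gives
\begin{equation*}
	V^{(\varepsilon)}_{\vartheta^T(q)}(v(T)) - V^{(\varepsilon)}_q(v_0) + \int_0^T \mathcal{F}^{(\varepsilon)}_{\vartheta^t(q)}(v,\xi)\,dt \geq 0,
\end{equation*}
and, after expanding $\mathcal{F}^{(\varepsilon)}$ and sending $\varepsilon \to 0^+$, I obtain the claimed integral inequality with a suitable positive constant in the role of $\delta_V$. The uniform norm bound $\sup_q \|P(q)\|_{\mathcal{L}(\mathbb{H})} \leq \delta_V^{-1}$ comes from applying \eqref{EQ: SpatAvrgSingFuncIntegralCond} to the optimal process, letting $T\to\infty$, and using $\mathcal{J}_q(v^0,\xi^0) = V_q(v_0)$; the sign statements on $\operatorname{Ran}\mathcal{P}_N$ and $\operatorname{Ran}\mathcal{Q}_N$ then spring from the comparison $V_q \geq V$ that the same limit yields. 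I expect the principal obstacle to be the integration-by-parts step in (iii): since the form $\mathcal{F}_q$ depends on $t$ through $q$, an approximation of the optimal trajectory by classical solutions is required, justified by the parabolic smoothing employed in the proof of Theorem~\ref{TH: SpatAvrgLagrangeBundle}.
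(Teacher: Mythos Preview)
The paper does not actually include a proof of Theorem~\ref{TH: SpatAveragingNonOscill}: the statement is the last item before the funding section, and no argument is given. Your outline is therefore being assessed on its own merits, and the overall architecture---coercivity from \eqref{EQ: SpatAvrgSingFuncIntegralCond}, nonoscillation via the preceding theorem, identification of $V_q(v_0)$ with the optimal cost, and the dynamic-programming/perturbation argument mirroring the end of Section~\ref{SEC: LagrangeBundlesStationaryCase}---is the natural route and is essentially correct.

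There is, however, a genuine gap in the final paragraph. The limit you describe yields only the one-sided comparison $V_q(v_0)\geq V(v_0)$: for $v_0\in\operatorname{Ran}\mathcal{P}_N$ this gives $V_q(v_0)\geq(\overline{\mu}/2)|v_0|^2>0$ as claimed, but for $v_0\in\operatorname{Ran}\mathcal{Q}_N$ the inequality $V_q(v_0)\geq -(\overline{\mu}/2)|v_0|^2$ says nothing about the sign of $V_q(v_0)$. To conclude $V_q(v_0)<0$ there you need an \emph{upper} bound---either by exhibiting a concrete admissible process through $v_0\in\operatorname{Ran}\mathcal{Q}_N$ with strictly negative cost, or by running the dual (time-reversed) construction that produces an unstable Lagrange bundle and the companion inequality $V_q\leq V$. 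The same one-sidedness undermines your derivation of the quantitative bound $\sup_q\|P(q)\|\leq\delta_V^{-1}$: from $V_q\geq V$ alone you only control $V_q$ from below, not $|V_q|$. A two-sided comparison (or an explicit stabilizing process with cost controlled by $\delta_V^{-1}|v_0|^2$) is needed to close both claims.
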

%------
% Insert acknowledgments and information
% regarding funding at the end of the last
% section, i.e., right before the bibliography.
%------

%\begin{ack}
%We thank X.
%\end{ack}

\begin{funding}
The reported study was funded by the Russian Science Foundation (Project 22-11-00172).
\end{funding}

\section*{Data availability}
Data sharing not applicable to this article as no datasets were generated or analyzed during the current study.

\section*{Conflict of interest}
The author has no conflicts of interest to declare that are relevant
to the content of this article.

%------
% Insert the bibliography.
%------

\end{document}